\documentclass[11pt]{amsart}
\usepackage{amsmath, amssymb, amsthm, enumerate}
 \usepackage[displaymath, math lines, running]{lineno}

\usepackage[left=1.5in,top=1.25in,right=1.5in,bottom=1.25in]{geometry}

\numberwithin{equation}{section}
\newtheorem{theorem}{Theorem}[section]
\newtheorem{lemma}[theorem]{Lemma}

\newtheorem{proposition}[theorem]{Proposition}
\newtheorem{corollary}[theorem]{Corollary}

\theoremstyle{definition}
\newtheorem{definition}{Definition}[section]
\newtheorem{remark}{Remark}[section]

\newcommand{\qtq}[1]{\qquad\text{#1}\qquad}

\newcommand{\cyc}[1]{\langle #1\rangle}
\newcommand{\R}{\mathbb{R}}
\newcommand{\C}{\mathbb{C}}

\newcommand{\ep}{\epsilon}
\newcommand{\grad}{\nabla}
\renewcommand{\epsilon}{\varepsilon}

\title[Scattering for 2D NLS with (full) exponential nonlinearity.]{Scattering for the two dimensional NLS with (full) Exponential Nonlinearity.}
\author{A. Adam Azzam}
\date{\today}

\begin{document}
\bibliographystyle{plain}
\maketitle
\noindent
\begin{abstract}
We obtain global well-posedness, scattering, and global $L_t^4H_{x}^{1,4}$ spacetime bounds for energy-space solutions to the energy-subcritical nonlinear Schr\"odinger equation \[iu_t+\Delta u=u(e^{4\pi |u|^2}-1)\] in two spatial dimensions. Our approach is perturbative; we view our problem as a perturbation of the mass-critical NLS to employ the techniques of Tao--Visan--Zhang from \cite{MR2354495}. This permits us to combine the known spacetime estimates for mass-critical NLS proved by Dodson \cite{Dodson:2010aa} and the work of \cite{MR2929605} and \cite{MR2569615} to prove corresponding spacetime estimates which imply scattering. 

\end{abstract}
\tableofcontents
\newpage

\section{Introduction}
We consider the Cauchy problem for a pair of defocusing nonlinear Schr\"odinger (NLS) equations on $\R^2$:
 \begin{align}&\left\{\label{PDE}\begin{array}{ll}iu_t+\Delta u=F_1(u):=u(e^{4\pi|u|^2}-1)\\ u(0,x)=u_0(x)\in H_x^1(\R^2)\end{array}\right.\\\label{PDE2} &\left\{\begin{array}{ll}iu_t+\Delta u=F_2(u):=u(e^{4\pi|u|^2}-4\pi|u|^2-1)\\ u(0,x)=u_0(x)\in H^1_x(\R^2).\end{array}\right.\end{align} Here $u:\R\times \R^2\to \C$ is a complex-valued function of time and space. In this paper, our chief interest will be to understand the long-time behavior of solutions to \eqref{PDE} and \eqref{PDE2}. Of course, before we do this, we must first clarify what we mean by a solution. 
 \begin{definition} Let $I\subseteq \R$ be an interval containing the origin. We say $u:I\times \R^2\to \C$ is a (strong) {\em solution} to \eqref{PDE} (resp. \eqref{PDE2}) if it belongs to $C_t(K; H_{x}^{1})$ for every compact interval $K\subseteq I$ and satisfies the Duhamel formula 
 
 \begin{align}
 u(t)=e^{it\Delta}u(0)-i\int_{0}^{t}e^{i(t-s)\Delta}(iu_t+\Delta u)(s)\ ds,
 \end{align} for all $t\in I$. We refer to the interval $I$ as the {\em lifespan} of $u$. We say $u$ is a global solution if $I=\R$. 
 \end{definition}
 
Solutions to \eqref{PDE} and \eqref{PDE2} conserve, respectively, the following energies:
 \begin{align}
    \label{H1}H_1(u(t))&:=\int_{\R^2}|\grad u(t,x)|^2+\tfrac{1}{4\pi}\left( e^{4\pi u^2}-1-4\pi|u|^2 \right)(t,x)\ dx, \\ \label{H2} H_2(u(t))&:=\int_{\R^2}|\grad u(t,x)|^2+\tfrac{1}{4\pi}\left( e^{4\pi u^2}-1-4\pi|u|^2-8\pi^2|u|^4 \right)(t,x)\ dx.
 \end{align}Moreover, solutions to $\eqref{PDE}$ and $\eqref{PDE2}$ both enjoy the conservation of mass \begin{align}\label{M}M(u(t)):=\int_{\R^2}|u(t,x)|^2\ dx.\end{align} When there is no chance of confusion, we will write $H_1$ for $H_1(u(t))$, $H_2$ for $H_2(u(t))$ and $M$ for $M(u(t))$. 
 
 The study of \eqref{PDE} (resp. \eqref{PDE2}) began in \cite{MR2568809}, where it was shown that global solutions exist provided $H_1(u_0)\le 1$ (resp. $H_2(u_0)\le 1$). The different techniques, estimates, and difficulties involved in the study of \eqref{PDE} and \eqref{PDE2} in the cases $H_i(u_0)<1$, $H_i(u_0)=1$, and $H_i(u_0)>1$ prompted the authors to adopt the following trichotomy.

 \begin{definition} \label{2DCriticality}We say that \eqref{PDE} (resp. \eqref{PDE2}) is {\em energy-subcritical} if $H_1(u_0)<1$ (resp. $H_2(u_0)<1$), {\em energy-critical} if $H_1(u_0)=1$ (resp. $H_2(u_0)=1$), and {\em energy-supercritical} if $H_1(u_0)>1$ (resp. $H_2(u_0)>1$).
 \end{definition}
 
Traditionally, the honorific ``energy-critical" has been given to a family of semilinear NLS in $d\ge 3$, where an available scaling symmetry leaves invariant both the energy and class of solutions. In our case neither equation enjoys a scaling symmetry, and so some explanation is needed to justify in what sense we regard \eqref{PDE} and \eqref{PDE2} as energy-critical. To properly explain this, we will begin with a brief review of the familiar energy-critical NLS in $d\ge 3$. Our goal in doing so is to understand the defining features of energy-criticality independent of scaling, and how these features manifest themselves in the theory of well-posedness. Once this is accomplished, we will draw analogies between the theory of \eqref{PDE} and \eqref{PDE2} and the theory of the energy-critical NLS in dimension $d\ge 3$. 

\subsection{Energy-Critical NLS in $\R^d$, $d\ge 3$.} In dimension $d\ge 3$, consider the defocusing nonlinear Schr\"odinger equation \begin{align}\label{nlsd3} \left\{\begin{array}{ll}iu_t+\Delta u=|u|^pu,\qquad p>0\\ u(0,x)=u_0(x)\in \dot{H}_x^s(\R^d).\end{array}\right.\end{align} Solutions to $\eqref{nlsd3}$ also enjoy the conservation of mass $\eqref{M}$ and that of  energy \begin{align*}H(u(t)):=\int_{\R^d}\frac{1}{2}|\grad u(t,x)|^2+\frac{1}{p+2}|u(t,x)|^{p+2}\ dx.
 \end{align*}
 The class of solutions to $\eqref{nlsd3}$ is invariant under the scaling \begin{align} \label{nlsscaling} u(t,x)\mapsto u_{\lambda}(t,x):= \lambda^{\frac{2}{p}}u(\lambda^2 t, \lambda x).\end{align} The effect of the scaling on the initial data is given by \[||u_{\lambda}(0)||_{\dot{H}_x^s(\R^d)}=\lambda^{s-(\frac{d}{2}-\frac{2}{p})}||u(0)||_{\dot{H}_x^s(\R^d)}.\] Thus, when $s=\frac{d}{2}-\frac{2}{p}$, the scaling $\eqref{nlsscaling}$ leaves invariant both the class of solutions and the size of the initial data. 
 
 \begin{definition} Consider the initial value problem $\eqref{nlsd3}$. Let \[s_c=\tfrac{d}{2}-\tfrac{2}{p}.\]We say the problem is {\em critical} when $s=s_c$, {\em subcritical} when $s>s_c$, and {\em supercritical} when $s<s_c$.
 \end{definition}

When $s=s_c=1$, the energy $H(u(t))$ is left invariant by the scaling $\eqref{nlsscaling}$, giving rise to the {\em energy-critical} NLS 
  \begin{align}\label{ecnls} \left\{\begin{array}{ll}iu_t+\Delta u=|u|^\frac{4}{d-2}u,\qquad d\ge 3\\ u(0,x)=u_0(x)\in \dot{H}_x^1(\R^d).\end{array}\right. \end{align} When $s=1$ and $p<\frac{4}{d-2}$ (i.e. $s_{c}<1$), we say that $\eqref{nlsd3}$ is {\em energy-subcritical}.
  
In the last two decades the energy-critical NLS \eqref{ecnls} has been the subject of a relentless, inspiring, and successful campaign to understand the local and global behavior of its solutions. Local well-posedness was first proved by Cazenave and Weissler in \cite{MR1055532}, who showed that the length of a solution's lifespan depends on the profile of $u_0$ rather than the norm of $u_0$. The first victory on the front of large-data global well-posedness was made by J. Bourgain in \cite{MR1626257}, who proved global well-posedness and exhibited global spacetime bounds for spherically symmetric initial data in $d=3,4$ by introducing what is now known as the `induction on energy' paradigm. Using this paradigm, and introducing a wealth of new ideas and techniques, the authors of \cite{MR2415387} managed to remove Bourgain's assumption of spherically symmetric data in $d=3$. Adapting these techniques to handle newfound difficulties in high dimensions, the problem was completely resolved by the authors of \cite{MR2288737} $(d=4)$ and \cite{MR2318286} $(d\ge 5)$. 

Though the notion of energy-criticality in $d\ge 3$ is defined through an available scaling symmetry, it is important to understand the characteristic features of the energy-subcritical and energy-critical nonlinearities independent of this symmetry. 

As \eqref{nlsscaling} reveals, energy-criticality is determined by the response of the $\dot{H}_x^1$ norm at fine scales or, equivalently, high-frequencies. In the energy-critical case, the kinetic and potential energy norms are equally strong at all scales. In the energy-subcritical case, the kinetic energy norm dominates the potential energy norm at fine scales. This phenomenon is expressed concisely in the Sobolev inequality, which in $d\ge 3$ reads
\begin{align}\label{endpointsobolev}||f||_{L_x^{\frac{2d}{d-2}}}&\le C_{d}||\grad f||_{L_x^2},\\ \label{sobembhid}\sup_{||u||_{H_{x}^{1}}\le 1}||u||_{L_x^p}&\le C(p,d)\qtq{}2\le p\le \frac{2d}{d-2}.\end{align}  Heuristically, Sobolev embedding informs us of how strong our nonlinearity may be, i.e. how large $p$ may be, before the potential energy norm overpowers the kinetic energy norm. 

These features present themselves in the well-posedness theory in a few ways. In the energy-subcritical setting the time of existence guaranteed in the local theory depends only on the size of the initial data; in the energy-critical setting, the time of existence depends on its profile. The local theory may be iterated to extend the lifespan of a local solution provided there is no energy concentration.  A computation shows that 
\[H(u_{\lambda})=\lambda^{2(1-s_c)}H(u)\to \infty,\]which converges to $\infty$ as $\lambda\to \infty$ in the energy-subcritical case $s_c<1$. Thus, energy conservation discourages concentration in this case. In the energy-critical setting $s_c=1$, however, the conservation of energy does not immediately rule out the possibility of concentration. 
 
 \subsection{Energy-Critical NLS in $\R^2$}
 
 In $\R^2$, Sobolev embedding guarantees that every polynomial nonlinearity is energy-subcritical. Indeed, if $u\in H_{x}^{1}$ is localized to frequency $\sim N$ and $||u||_{H_{x}^1(\R^2)}\le 1$ (say), then Bernstein's inequalities show that \[||u||_{L_x^{p}}^{p}\lesssim_{p} N^{-2}||\grad u||_{L_x^2}^{2}.\] At fine scales $(N\gg 1)$, we see that the kinetic energy dominates the potential energy. Thus, if we are to identify an energy-critical nonlinearity, it is natural to consider an exponential nonlinearity.
 
In \cite{MR2568809} and \cite{MR2929605}, the authors identified the nonlinearities $F_1(u)$ and $F_2(u)$ in $\eqref{PDE}$ and $\eqref{PDE2}$ as energy-critical when $H_1(u_0)=1$ and $H_2(u_0)=1$, respectively, using a substitute for the end-point Sobolev inequality \eqref{endpointsobolev} known as the Moser--Trudinger inequality. We will discuss the Moser--Trudinger inequality and its many variants in detail below.  In analogy to the end-point Sobolev embedding \eqref{sobembhid} in $d\ge 3$, the Moser--Trudinger inequality informs us of the exact speed at which a nonlinearity may grow before the potential energy term overpowers the kinetic energy norm. Unlike the end-point Sobolev embedding however, the Moser--Trudinger inequality holds only for functions with sufficiently small $\dot{H}_x^1$ norm. Thus, in the case $d=2$, we fix one nonlinearity $F_1(u)$ or $F_2(u)$ and classify energy-criticality depending on the size of initial energy.

In \cite{MR2568809}, the authors established a complete trichotomy analogous to the energy-critical case in $d\ge 3$, corresponding to the size of the initial data's energy. In the terminology of Definition \ref{2DCriticality}, the Cauchy problems \eqref{PDE} and \eqref{PDE2} are globally well-posed in the energy-subcritical and energy-critical cases. In analogy to the $d\ge 3$ case, the lifespan of a local solution in the energy-subcritical cases of \eqref{PDE} and \eqref{PDE2} depend on the size of the initial data, whereas the lifespan in the energy-critical cases depend fully on the profile of the initial data. Moreover, they demonstrate that \eqref{PDE} and \eqref{PDE2} are ill-posed for a subset of initial data in the energy-supercritical case (though no ill-posedness results are known for slightly energy-supercritical data). 

With global well-posedness established in the energy-critical and energy-subcritical cases, the next natural question to investigate was whether global solutions scatter in ${H}_x^1$. In \cite{MR2929605}, the authors proved the existence of global spacetime bounds which imply scattering for global solutions to \eqref{PDE2} in the energy-subcritical case. The key insight was to use the {\em a-priori} Morawetz estimate established independently by Colliander--Grillakis--Tzirakis in \cite{MR2527809} and Planchon--Vega in \cite{MR2518079}. In \cite{MR3161603}, the authors expanded on \cite{MR2929605}, by proving the existence of global spacetime bounds which imply scattering for global solutions in the energy-critical case of \eqref{PDE2} under the additional assumption of radial initial data. 

The chief difficulty in establishing similar results for the corresponding cases of \eqref{PDE} stem from the poor decay of the cubic term in the Taylor expansion of $F_1(u)$:
\[F_1(u)=4\pi u|u|^2+8\pi^2u|u|^4+\cdots.\] Indeed, $F_1(u)$ can only decay at least as slow as the cubic nonlinearity $4\pi u|u|^2$ does.  Thus, we may only expect global solutions to \eqref{PDE} to scatter in $H_x^1$ if we expect scattering in $H_x^1$ for global solutions to the the associated Cauchy problem
 \begin{align}\label{h1mcnls} \left\{\begin{array}{ll}iu_t+\Delta u=4\pi|u|^2u\\ u(0,x)=u_0(x)\in H_x^1(\R^2).\end{array}\right.\end{align} In $\R^2$, this corresponds to the mass-critical NLS, whose theory we review briefly below. 
 \subsection{Mass-Critical NLS in $\R^2$}
 When $s=s_c=0$ the mass $M(u(t))$ is left invariant by the scaling ($\ref{nlsscaling})$, giving rise to the {\em mass-critical} NLS. In two space dimensions, this takes the form:
 \begin{align}\label{mcnls} \left\{\begin{array}{ll}iu_t+\Delta u=g(u):=|u|^2u\\ u(0,x)=u_0(x)\in L_x^2(\R^2).\end{array}\right.\end{align}  
 
 The local theory for \eqref{mcnls} was established by Cazenave and Weissler in \cite{MR1030060}. Analogously to the local theory for the energy-critical case in $d\ge 3$, the lifespan of the local solutions they constructed depended on the profile of the initial data and not just the $L_x^2$-norm. Two decades later, in \cite{MR2557134}, Killip--Tao--Visan showed that for radial initial data, \eqref{mcnls} is globally well-posed and that solutions obey global spacetime bounds; in particular, scattering holds. Soon after, in \cite{Dodson:2010aa}, Dodson removed the radiality assumption and established the theorem in its full generality. 
\begin{theorem}[Radial \cite{MR2557134}, Non-Radial \cite{Dodson:2010aa}]\label{dodson} For $u_0\in L_x^2$, there exists a unique strong solution $u:\R\times \R^2\to \C$ to $\eqref{mcnls}$. Moreover, $u$ satisfies \[\int_{\R}\int_{\R^2}|u(t,x)|^4\ dx\ dt<C(||u_0||_{L_x^2})\] and scatters both backwards and forwards in time. 
\end{theorem}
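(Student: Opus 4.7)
The strategy is the standard concentration–compactness and rigidity scheme that has become the default for critical dispersive problems. Assume toward contradiction that the theorem fails. Then there is a threshold mass $m_\ast\ge 0$ such that global existence together with a finite spacetime bound $\int_\R\int_{\R^2}|u|^4\,dx\,dt<\infty$ (and hence scattering) holds for all data of mass strictly below $m_\ast$, but fails at $m_\ast$. The aim is to extract a minimal counterexample — an almost periodic solution $u$ (modulo the translation, scaling, and Galilean symmetries of $\eqref{mcnls}$) of mass $m_\ast$ whose spacetime norm is infinite on its maximal interval of existence — and then derive a contradiction.

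First I would set up the local theory: Strichartz estimates combined with the Cazenave–Weissler fixed-point argument furnish local well-posedness in $L_x^2$ with lifespan depending on the profile of $u_0$, a blowup criterion phrased in terms of the $L_{t,x}^4$ norm, and a perturbation lemma that promotes approximate solutions with small spacetime error to genuine solutions near them. Second, I would apply a linear profile decomposition for the mass-critical Schr\"odinger propagator (in the style of Merle–Vega and B\'egout–Vargas) to a sequence of initial data $u_{0,n}$ with $\|u_{0,n}\|_{L_x^2}\to m_\ast$ and with diverging spacetime norms. Combined with the inductive hypothesis and stability, only a single profile can carry nontrivial mass asymptotically, and this forces the existence of $u$ as above with orbit $\{u(t)\}$ precompact in $L_x^2$ modulo symmetries; equivalently, $u$ is almost periodic with a frequency scale $N(t)$ and a spatial center $x(t)$.

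Third, I would classify the almost periodic enemies according to the behavior of $N(t)$ on the maximal lifespan, reducing to three scenarios: a self-similar solution with $N(t)\to\infty$ at a finite blowup time, a double high-to-low frequency cascade, and a quasi-soliton with $N(t)\equiv 1$. The interaction Morawetz inequality of Planchon–Vega and Colliander–Grillakis–Tzirakis, together with mass conservation and precompactness of the orbit, should rule out the first two. Galilean invariance allows me to normalize the momentum center to zero in the remaining case.

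The hard part, and the main technical obstacle, is to exclude the quasi-soliton without any radial or additional regularity assumption. Here I would follow Dodson's strategy: prove a long-time Strichartz estimate that quantifies the high-frequency tail of $u$ over time intervals of length $\sim N(t)^{-2}$, and then use it to justify a frequency-localized interaction Morawetz inequality for the low-frequency projection $P_{\le C}u$ at a carefully chosen cutoff $C$. The output must beat the lower bound on the interaction Morawetz quantity coming from the quasi-soliton structure, forcing the solution to vanish. At the critical regularity $L_x^2$ in two spatial dimensions this balance is extremely delicate — borderline scaling of the Morawetz functional and logarithmic losses on the Strichartz side are the central difficulties — and I expect this step to absorb essentially all the technical weight of the proof.
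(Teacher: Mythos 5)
The paper does not actually prove Theorem \ref{dodson}: it is imported as a black box from Killip--Tao--Visan \cite{MR2557134} (radial case) and Dodson \cite{Dodson:2010aa} (general case), and the remainder of the paper only uses the resulting $L_{t,x}^4$ bound, together with persistence of regularity (Lemma \ref{STBD}), as input to a perturbative argument around \eqref{mcnls}. So there is no in-paper proof to compare yours against; the relevant comparison is with the cited literature, and there your outline is an accurate roadmap: local theory and stability in $L_x^2$, a linear profile decomposition to extract a minimal-mass almost periodic counterexample, classification of the enemies via $N(t)$, interaction Morawetz to remove the self-similar and cascade scenarios, and Dodson's long-time Strichartz estimate feeding a frequency-localized interaction Morawetz inequality to exclude the quasi-soliton.

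However, as a proof the proposal has a genuine gap, and it is exactly the one you flag yourself: every step carrying real content (the profile decomposition with its asymptotic decoupling of nonlinear profiles, the reduction to the three scenarios, the long-time Strichartz estimate at critical $L_x^2$ regularity in two dimensions, and the frequency-localized Morawetz argument with its borderline scaling and logarithmic losses) is named rather than executed, and the final rigidity step is deferred to ``follow Dodson's strategy'' without any indication of how the delicate balance there is closed. That step is the content of an entire research paper, not a routine verification, so the sketch cannot stand as a self-contained proof. In the context of the present paper the correct and economical move is what the author does: cite the theorem and build the perturbative scattering argument on top of it.
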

Thus, when $u_0\in L_x^2$ the global solution $u:\R\times \R^2\to \C$ scatters in $L_x^2$. Returning to \eqref{h1mcnls}, we would like to know, for $H_x^1$ data, whether the global solution guaranteed by Theorem \ref{dodson} scatters in $H_x^1$. Luckily, this follows from a standard lemma (See, for example, Lemma 3.10 in \cite{MR2354495}). 
\begin{lemma}[Persistence of Regularity] \label{STBD}Let $k=0,1$ and $I$ be a compact time interval. Let $v$ be the unique solution to $\eqref{mcnls}$ on $I\times \R^2$ with \begin{align}
||v||_{L_t^4L_x^4(I\times \R^2)}\le L\label{STBD1}.
\end{align} Then, if $t_0\in I$ and $v(t_0)\in H_x^k$, we have \begin{align}
||v||_{S^k(I\times \R^2)}\le C(L)||v(t_0)||_{{H}_x^k}.\end{align}
\end{lemma}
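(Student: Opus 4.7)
The plan is to partition $I$ into finitely many subintervals on which $\|v\|_{L^4_tL^4_x}$ is small, run a Strichartz bootstrap on each to control the full $S^k$ norm, and then iterate across the pieces. This is the standard Cazenave--Weissler local-theory argument at the $L^2$ level, upgraded to $H^1$ by commuting a gradient through the equation.

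Concretely, I fix $\eta>0$ small, depending only on the Strichartz constant. Since $\|v\|_{L^4_tL^4_x(I\times\R^2)}\le L$, I split $I$ into $N\lesssim (L/\eta)^4$ consecutive subintervals $I_j=[t_j,t_{j+1}]$ with $\|v\|_{L^4_tL^4_x(I_j)}\le\eta$ on each. For $k=0$, I apply Strichartz to Duhamel's formula anchored at $t_j$, using that $(4,4)$ is Schr\"odinger-admissible in $\R^2$ with dual $(4/3,4/3)$, and estimate the nonlinearity by H\"older:
\[\|\,|v|^2v\,\|_{L^{4/3}_tL^{4/3}_x(I_j)}\lesssim \|v\|_{L^4_tL^4_x(I_j)}^2\,\|v\|_{L^4_tL^4_x(I_j)}\le \eta^2\,\|v\|_{S^0(I_j)}.\]
Choosing $\eta$ small enough that this term absorbs into the left, I get $\|v\|_{S^0(I_j)}\le 2\|v(t_j)\|_{L^2_x}$; by conservation of mass $\|v(t_j)\|_{L^2_x}=\|v(t_0)\|_{L^2_x}$, so summing over $j$ yields $\|v\|_{S^0(I)}\le C(L)\|v(t_0)\|_{L^2_x}$.

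For $k=1$ I repeat the argument on $\grad v$, using that $\grad$ commutes with $e^{it\Delta}$ and that $\|\grad(|v|^2v)\|_{L^{4/3}_tL^{4/3}_x(I_j)}\lesssim\|v\|_{L^4_tL^4_x(I_j)}^2\|\grad v\|_{L^4_tL^4_x(I_j)}$ by H\"older. The same bootstrap gives $\|\grad v\|_{S^0(I_j)}\le 2\|\grad v(t_j)\|_{L^2_x}$; unlike the $L^2$ norm, the $\dot H^1_x$ norm is not conserved, so iterating across the $N$ subintervals compounds, producing $\|\grad v\|_{S^0(I)}\lesssim 2^N\|\grad v(t_0)\|_{L^2_x}=C(L)\|\grad v(t_0)\|_{L^2_x}$, and combining with the $k=0$ bound gives the $S^1$ estimate. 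The only delicate point -- more bookkeeping than obstacle -- is fixing $\eta$ absolutely so the bootstrap closes and then absorbing the exponential factor $2^N$ with $N\lesssim (L/\eta)^4$ into the final constant $C(L)$; the $k=0$ estimate is essentially contained in Dodson's theorem, and the real content of the lemma is the $k=1$ upgrade, which is a direct consequence of commuting $\grad$ through the equation.
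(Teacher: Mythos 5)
Your argument is correct and is essentially the standard proof the paper invokes by citation (Lemma 3.10 of Tao--Visan--Zhang): subdivide $I$ into $O((L/\eta)^4)$ pieces on which $\|v\|_{L^4_{t,x}}\le\eta$, close a Strichartz bootstrap on each using $\||v|^2v\|_{L^{4/3}_{t,x}}\lesssim\eta^2\|v\|_{S^0}$ and its gradient analogue, and iterate, accepting the $2^N$ loss at the $\dot H^1$ level since $N=N(L)$. The only point to keep in mind is that the absorption step presupposes the finiteness of $\|v\|_{S^k(I_j)}$, which is supplied by the local theory on compact intervals (or a continuity argument), as is standard.
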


\subsection{Main Results}
In this paper we address the question of whether global solutions to \eqref{PDE} obey global spacetime bounds and scatter in ${H}_x^1$ in the energy-subcritical case. To do so, we exploit the insights of Tao-Visan-Zhang from \cite{MR2354495}. 

In \cite{MR2354495}, the authors embark on a systematic study of Cauchy problems of the form 
 \begin{align}\label{combined} \left\{\begin{array}{ll}iu_t+\Delta u=|u|^{p_1}u+|u|^{p_2}u\\ u(0,x)=u_0(x)\in H_x^1,\end{array}\right.\end{align} with $u:\R_{t}\times \R_{x}^{d}\to \C$ and $0<p_1<p_2\le \frac{4}{d-2}$. They show that for various values of $p_1$ and $p_2$, if $\eqref{combined}$ is globally well-posed then a solution to \eqref{combined} can be viewed as a perturbation of a solution to
  \begin{align}\label{combined2}\left\{\begin{array}{ll}iu_t+\Delta u=|u|^{p_1}u\\ u(0,x)=u_0(x)\in H_x^1,\end{array}\right.\end{align} with error $|u|^{p_2}u$.  In these cases, they show, the solution to \eqref{combined} may inherit global spacetime bounds, if they exist, for \eqref{combined2}. 
  
Our approach is similar. We write \[F_1(u)=4\pi |u|^2u+F_2(u),\] and thus view \eqref{PDE} as a perturbation of the mass-critical NLS \eqref{h1mcnls} with error $F_2(u)$. We exploit the estimates from \cite{MR2929605} in the energy-subcritical case of \eqref{PDE2} to show that the error term $F_2(u)$ is mild enough to derive global spacetime bounds from those enjoyed by global solutions of the mass-critical problem. These results are summarized in the following theorem. 

\begin{theorem} \label{perturbtheorem}For $u_0\in H_x^1$ with $H_1(u_0)\le 1$, there exists a unique strong solution $u:\R\times \R^2\to \C$ to $\eqref{PDE}$. If $H_1(u_0)<1$, then $u$ satisfies \[\int_{\R}\int_{\R^2}|u(t,x)|^4+|\grad u(t,x)|^4\ dx\ dt<C(||u_0||_{H_x^1})\] and scatters both backwards and forwards in time. 
\end{theorem}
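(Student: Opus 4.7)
The plan is to view $u$ as a near-solution to the mass-critical NLS \eqref{h1mcnls}. Writing $F_1(u) = 4\pi|u|^2u + F_2(u)$, the solution $u$ satisfies \eqref{h1mcnls} with an additional forcing term $F_2(u)$. Global existence and uniqueness of $u\in C_t H^1_x$ in the range $H_1(u_0)\le 1$ is already supplied by \cite{MR2568809}. Conservation of $H_1(u(t))=H_1(u_0)<1$ combined with the Moser--Trudinger inequality yields a uniform-in-time $L_x^p$ bound on $e^{4\pi|u|^2}$ for some $p>1$. It thus remains to prove the global $L_t^4L_x^4 + L_t^4 \dot{W}_x^{1,4}$ bound; $H_x^1$-scattering will then follow from standard Strichartz arguments.

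More concretely, let $v:\R\times\R^2\to\C$ be the global solution of the mass-critical NLS \eqref{h1mcnls} with $v(0)=u_0$, whose bounds $\|v\|_{S^1(\R\times\R^2)}\le C(\|u_0\|_{H^1_x})$ follow from Theorem \ref{dodson} combined with Lemma \ref{STBD}. I would partition $\R$ into finitely many subintervals $I_1,\dots,I_J$, with $J$ depending only on $\|u_0\|_{H^1_x}$ and a smallness threshold $\eta$ to be chosen, on each of which $\|v\|_{L_t^4L_x^4(I_j\times\R^2)}<\eta$. On each $I_j$ I then apply a long-time $H^1$ stability lemma for the mass-critical NLS, in the spirit of the Tao--Visan--Zhang framework of \cite{MR2354495}: provided $\|u(t_j)-v(t_j)\|_{H_x^1}$ is small and the error $\|F_2(u)\|_{N^1(I_j\times\R^2)}$ is small, we obtain that $\|u-v\|_{S^1(I_j\times\R^2)}$ is small. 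Iterating across $I_1,\dots,I_J$, starting from the identity $u(0)=v(0)=u_0$, yields the desired global bound $\|u\|_{S^1(\R\times\R^2)}\le C(\|u_0\|_{H^1_x})$, from which the stated spacetime estimate and scattering follow.

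The principal obstacle is the smallness of the error $\|F_2(u)\|_{N^1(I_j\times\R^2)}$. Since the Taylor expansion
\[
F_2(u)=\sum_{k\ge 2}\frac{(4\pi)^k}{k!}|u|^{2k}u
\]
begins at quintic order, $F_2(u)$ has pointwise decay one order better than the cubic nonlinearity; however, the exponential factor $e^{4\pi|u|^2}$ precludes any purely polynomial estimate. Here the \emph{strict} energy-subcritical hypothesis $H_1(u_0)<1$ is essential: combined with Moser--Trudinger it produces an $L_x^p$ bound on $e^{4\pi|u|^2}$ with a uniform margin, which can then be paired against the polynomial factor $|u|^{2k}u$ via H\"older and Strichartz estimates to control $F_2(u)$ in $N^1$. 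This is precisely the situation treated in \cite{MR2929605} for the energy-subcritical regime of \eqref{PDE2}, whose nonlinearity is exactly $F_2$; adapting those estimates converts the smallness of $\|u\|_{L_t^4L_x^4(I_j)}$ (bootstrapped from $\|v\|_{L_t^4L_x^4(I_j)}<\eta$ via the comparison on $I_j$) into the smallness of $\|F_2(u)\|_{N^1(I_j)}$ needed to close the perturbation and iterate.
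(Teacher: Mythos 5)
Your overall architecture (write $F_1(u)=4\pi|u|^2u+F_2(u)$, compare with the Dodson global solution of the mass-critical NLS, iterate an $H^1$ stability lemma over intervals on which $v$ is small) matches the paper, but there is a genuine gap in the one place where the real work happens: the smallness of the error $\|F_2(u)\|_{N(I_j\times\R^2)}$. You propose to obtain it from the smallness of $\|u\|_{L_t^4L_x^4(I_j)}$, which is itself ``bootstrapped from $\|v\|_{L_t^4L_x^4(I_j)}<\eta$ via the comparison on $I_j$.'' This has only one small parameter, $\eta$, and it cannot close the induction: the per-interval error you can hope for is at best a power of $\eta$ (times $C(H,M)$), while the number of intervals $J=J(\eta,\|u_0\|_{H^1})$ grows like $\eta^{-4}$ and the stability lemma multiplies the accumulated discrepancy by a constant $C(M,L)>1$ on each interval, so the total drift is of size $C^{J(\eta)}\eta^{2+\sigma}$, which blows up as $\eta\to0$ and is not small for fixed $\eta$. (There is also a circularity: the stability lemma needs the error small before it yields the closeness of $u$ to $v$ that you want to use to make the error small; this can only be repaired inside a continuity argument if the error bound carries an additional small prefactor independent of the bootstrap quantity.) A uniform-in-time Moser--Trudinger bound on $e^{4\pi|u|^2}$ in $L_x^p$, $p>1$, does not supply that prefactor, and by itself it does not even control $\grad F_2(u)$ in a dual Strichartz norm at times when $\|u(t)\|_{L_x^\infty}$ is large --- that requires the Br\'ezis--Gallou\"et--Wainger/$L_t^pL_x^\infty$ machinery of Lemma \ref{LEST} and Corollary \ref{subcritmora}.

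The paper's proof resolves exactly this point with the \emph{a priori} interaction Morawetz estimate (Lemma \ref{morawetz}), which your proposal never invokes. Because $\|u\|_{L_t^4L_x^8(\R\times\R^2)}\le C(H,M)$ unconditionally, one can first split $\R$ into $J(H,M,\epsilon)$ intervals with $\|u\|_{L_t^4L_x^8(I_j)}\sim\epsilon$, and Corollary \ref{subcritmora} gives $\|F_2(u)\|_{N(I_j)}\lesssim C(H,M)\,\epsilon^{4\delta}\|u\|_{L_t^4H_x^{1,4}(I_j)}^2$: the Morawetz factor $\epsilon^{4\delta}$ is an independent small parameter that can be chosen \emph{after} $\eta$ and the number $K(H,M,\eta)$ of $v$-intervals are fixed, which is what lets both the continuity argument ($\|u\|_{L_t^4H_x^{1,4}(J_k)}\le 4\eta$) and the iterated stability estimates survive the $C^K$-type accumulation. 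To repair your argument you would need to import this two-parameter structure: prove (or quote) an error estimate in which $\|F_2(u)\|_{N}$ carries a positive power of a globally, a priori controlled norm that can be made small by subdivision, rather than a power of the quantity you are bootstrapping.
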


To establish spacetime bounds on global solutions to \eqref{PDE} or \eqref{PDE2}, the Strichartz inequality (see Section \ref{prelim}) informs us that we need only investigate which spacetime bounds are available to estimate the nonlinearity. This requires an inquiry into which $1\le p,q\le \infty$ do we have control over \begin{align}\label{nonlinestexp}||F_i(u)||_{L_t^pL_x^q(\R\times \R^2)},\end{align} for $i\in \{1,2\}$. Indeed, the analysis in \cite{MR3161603},\cite{MR2568809}, and \cite{MR2929605} relied crucially on estimating terms like \eqref{nonlinestexp}. Our second main result describes a wide range of exponents for which one may control \eqref{nonlinestexp}. 

\begin{theorem}[Moser--Trudinger--Strichartz] \label{GMTS}Let $u:\R\times \R^2\to \C$ satisfy \begin{align}\label{cond1}||u(t)||_{\dot{H}_x^1(\R^2)}\le 1\qtq{and}||u(t)||_{L_x^2(\R^2)}^2=M,\end{align} for every $t\in \R$. If $s\ge 4$ and $1\le p,q\le \infty$ are such that \begin{align}\label{cond2}\frac{q'}{p}> 1,\end{align}then \begin{align}\label{GMTSC}\bigl\| |u|^se^{4\pi |u|^2}\bigl\|_{L_t^pL_x^q(\R\times \R^2)}\lesssim_{M} ||u||_{L_t^4H_{x}^{1,4}(\R\times \R^2)}^{\frac{4}{p}}.\end{align} Conversely, if $\eqref{GMTSC}$ holds for all $u$ satisfying $\eqref{cond1}$, then $q$ and $p$ must satisfy $\eqref{cond2}.$ 
\end{theorem}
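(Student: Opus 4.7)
The plan is to establish the forward direction of \eqref{GMTSC} via a Taylor expansion and a termwise H\"older estimate, and to obtain the converse by a Moser-type concentration argument. I would expand
\[
|u|^s e^{4\pi|u|^2}=\sum_{k\ge 0}\frac{(4\pi)^k}{k!}|u|^{s+2k},
\]
and by Minkowski reduce to estimating $\||u|^{s+2k}\|_{L_t^p L_x^q}$ for each $k\ge 0$. For each such term I would factor $|u|^{s+2k}=|u|^{4/p}\cdot|u|^{s+2k-4/p}$ (valid since $s\ge 4\ge 4/p$) and apply H\"older: in time with exponents $(p,\infty)$, and in space with exponents $(q_1,q_2)$ satisfying $1/q_1+1/q_2=1/q$, subject to $q_1\ge p$ and $q_2\ge 1$. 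The first factor becomes $\|u\|_{L_t^4 L_x^{4q_1/p}}^{4/p}$, which by 2D Sobolev embedding $H_x^{1,4}\hookrightarrow L_x^r$ for $r\ge 4$, combined with interpolation against the fixed bound $\|u(t)\|_{L_x^2}=\sqrt{M}$, is controlled by $C_M\|u\|_{L_t^4 H_x^{1,4}}^{4/p}$.

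The second factor is $\sup_t\|u(t)\|_{L_x^{(s+2k-4/p)q_2}}^{s+2k-4/p}$. To bound it uniformly in $t$, I would invoke the sharp Moser--Trudinger--Ruf inequality in the form $\|u(t)\|_{L_x^{2j}}^{2j}\lesssim_M j!/(4\pi)^j$, which follows directly from the pointwise estimate $u^{2j}\le j!(4\pi)^{-j}(e^{4\pi u^2}-1)$ together with $\int(e^{4\pi u^2}-1)\,dx\lesssim_M 1$. After Stirling, the $k$-th summand collapses, up to polynomial corrections, to a geometric series whose ratio is determined by the Hölder exponents $(q_1,q_2)$. The strict condition $q'/p>1$, equivalent to $1/p+1/q>1$, is precisely what permits an admissible choice of $(q_1,q_2)$ keeping this ratio below $1$, so that the series converges and the bound $C_M\|u\|_{L_t^4 H_x^{1,4}}^{4/p}$ emerges. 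The borderline $k=0$ contribution when $s=4/p$ (i.e., $s=4,\,p=1$) is handled separately by a direct Sobolev estimate, since the term is then just $\||u|^4\|_{L_t^1 L_x^q}=\|u\|_{L_t^4 L_x^{4q}}^4$.

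The hardest step is this sharp-constant bookkeeping: the MT--Ruf bound is essentially tight (growing like $\Gamma(r/2+1)$), so the Stirling cancellation leaves only a narrow margin, and forcing the final geometric series to converge requires careful optimization of $(q_1,q_2)$ that exhausts the strictness of $q'/p>1$. My backup, if the naive termwise estimate runs too close to the critical threshold, is to invoke the log--Sobolev inequality of Ibrahim--Majdoub--Masmoudi--Nakanishi to polynomially bound $e^{4\pi|u|^2}\lesssim_M 1+\|u\|_{H_x^{1,4}}^{4(1+M)}$ and reduce \eqref{GMTSC} to a standard $L^p$-interpolation estimate against the fixed $L^2$ datum. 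For the converse direction, I would take Moser extremals $u_n(x)=(2\pi)^{-1/2}\min\{\sqrt{\log n},\,\log(1/|x|)/\sqrt{\log n}\}\chi_{|x|\le 1}$, modified by a widespread background so that $\|u_n\|_{\dot H^1}\le 1$ and $\|u_n\|_{L^2}^2=M$, and view them as time-independent profiles on a fixed time window. A direct computation then shows that whenever $q'/p\le 1$, the left-hand side of \eqref{GMTSC} grows as a positive power of $\log n$ while the right-hand side stays $O(1)$, producing the desired contradiction as $n\to\infty$ and thereby completing the necessity direction.
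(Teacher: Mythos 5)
Your forward argument has two genuine gaps. First, the moment bound $\|u(t)\|_{L_x^{2j}}^{2j}\lesssim_M j!(4\pi)^{-j}$ rests on the claim $\int_{\R^2}(e^{4\pi|u|^2}-1)\,dx\lesssim_M 1$ under \eqref{cond1}. But \eqref{cond1} only gives $\|u(t)\|_{H_x^1}^2\le 1+M$, and the critical ($\alpha=4\pi$) Moser--Trudinger inequality on $\R^2$ with only $\|\grad u\|_{L_x^2}\le 1$ is false: Ruf's inequality (Proposition \ref{MT2}) requires the \emph{full} norm $\|u\|_{H_x^1}\le 1$, and under \eqref{cond1} one only has the weighted substitute of Proposition \ref{MT3}, i.e.\ Lemma \ref{L1EST} with the unavoidable extra factor $(1+\|u(t)\|_{L_x^\infty}^2)$. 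Second, even granting that moment bound with the sharp constant, the termwise sum does not close: with $j\approx kq_2$, Stirling gives $(j!)^{1/q_2}(4\pi)^{-j/q_2}\cdot\frac{(4\pi)^k}{k!}\sim q_2^{\,k}\ge 1$, so the $(4\pi)^k/k!$ coefficients are exactly cancelled and no geometric decay remains; the strictness $q'/p>1$ lives in the $(p,q)$ exponents, not in the exponential rate, and cannot make this series converge. The paper's gain comes from a different mechanism: split time according to the size of $\|u(t)\|_{L_x^\infty}$, peel off the factor $e^{4\pi\frac{q-\alpha}{q}\|u(t)\|_{L_x^\infty}^2}$ and control it in $L_t^pL_x^\infty$ via the sharp Br\'ezis--Gallou\"et--Wainger inequality (Lemma \ref{LEST}), leaving a genuinely subcritical $\alpha<4\pi$ spatial integral handled by Proposition \ref{MT4}. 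Your ``backup'' log-Sobolev idea is in that spirit, but as stated it yields an exponent like $\|u(t)\|_{H_x^{1,4}}^{4(1+M)}$ rather than the needed $4/p$ with the correct time integrability; arranging the exponent to come out exactly right (via the damped norm $H^1_\mu$, the choice of $r_0$ by the intermediate value theorem, and the interpolation step) is the entire content of Lemma \ref{LEST}, and you have not carried it out.

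The converse argument also has a gap: a time-independent Moser profile ``on a fixed time window'' is not an admissible counterexample. The hypothesis \eqref{cond1} must hold for \emph{every} $t\in\R$ and the right-hand side of \eqref{GMTSC} is the global norm $\|u\|_{L_t^4H_x^{1,4}(\R\times\R^2)}$; any $u$ that is constant in time with mass $M>0$ has infinite right-hand side, so no contradiction results, while restricting the norms to a window changes the statement being disproved. The paper repairs exactly this by taking $v_N(t)=e^{it\Delta}v_N(0)$ with $v_N(0)$ a concentrated logarithmic profile: conservation under the free flow gives \eqref{cond1} for all $t$, Strichartz bounds the global $L_t^4H_x^{1,4}$ norm by $\|v_N(0)\|_{H_x^1}\lesssim 1$, and the lower bound on the left-hand side is extracted on the parabolic region $t\sim\ep N^{-2}$, $|x|\sim\ep N^{-1}$ where the free evolution still resembles the initial profile, giving $N^{2(1-\frac1p-\frac1q)}(\log N)^{s/2}\lesssim 1$ and hence $\frac1p+\frac1q>1$. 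Your static scaling computation captures the right heuristics, but to make it rigorous you must propagate the profile in time (or otherwise produce an admissible spacetime function with finite right-hand side), as the paper does.
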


It is important to understand that Theorem \ref{GMTS} holds for arbitrary spacetime functions satisfying \eqref{cond1}. Though every solution to \eqref{PDE} and \eqref{PDE2} satisfies \eqref{cond1} in the energy-subcritical and energy-critical cases, it is obvious that not every spacetime function satisfying \eqref{cond1} solves \eqref{PDE} or \eqref{PDE2}. Thus, improvements into the range of exponents may be made if we restrict ourselves to only consider solutions to \eqref{PDE} and \eqref{PDE2}. For example, in \cite{MR3161603}, the authors demonstrate that one may improve \eqref{cond2} if $u(t)$ was sufficiently small in an Orlicz space for all times $t\in \R$.

The endpoint in \eqref{PDE2} represents a frustrating obstacle in extending Theorem \ref{perturbtheorem} to the energy-critical case of \eqref{PDE}. Indeed, even assuming conditionally the existence of global spacetime bounds in \eqref{PDE2}, we are unable to extend the results to similar ones for \eqref{PDE}. This obstacle arises in the perturbation theory, when one has to estimate a dual Strichartz norm of the form \[||\grad F_2(u)||_{L_t^\frac{2}{1+2\delta}L_x^{\frac{1}{1-\delta}}},\]for $0<\delta\le \frac{1}{2}$. At times when $||u(t)||_{L_x^\infty(\R^2)}$ is large, this term behaves like \[||\grad u |u|^4e^{4\pi |u|^2}||_{L_t^\frac{2}{1+2\delta}L_x^{\frac{1}{1-\delta}}}.\] To close our perturbation argument, we may only estimate $\grad u$ in $L_t^\infty L_x^2$ or $L_t^4L_x^4$. By interpolation and H\"older this requires one to estimate the term $|u|^4e^{4\pi |u|^2}$ in a space $L_t^pL_x^q$ with $\frac{q'}{p}=1$, the unavailable end-point case of Theorem \ref{GMTS}.

The investigation into which exponents in \eqref{cond2} are permissible for solutions to \eqref{PDE} or \eqref{PDE2} is an important line of future inquiry. 

\section{Acknowledgements} It is difficult for me to articulate just how much I appreciate the guidance and support of my advisors, Rowan Killip and Monica Visan. I am incredibly grateful to them for introducing me to this problem, for their time in discussing it with me, and for carefully reading this manuscript. This work was supported in part by NSF grant DMS 1265868 (P.I. Rowan Killip) and NSF grant DMS-1500707 (P.I. Monica Visan).

\section{Preliminaries}
\label{prelim}
We begin by fixing some notation. We will write $X\lesssim Y$ if there exists a constant $C$ so that $X\le CY$. When we wish to stress the dependence of this implicit constant on a parameter $\ep$ (say), so that $C=C(\ep)$, we write $X\lesssim_{\ep}Y$. We write $X\sim Y$ if $X\lesssim Y$ and $Y\lesssim X$. If there exists a {\em small} constant $c$ for which $X\le cY$ we will write $X\ll Y$. 

For $1\le r<\infty$ we recall the Lebesgue space $L_x^r(\R^2)$, which is the completion of smooth compactly supported functions $f:\R^2\to \C$ under the norm \[||f||_{r}=||f||_{L_x^r(\R^2)}:=\left({\int_{\R^2}|f(x)|^r\ dx}\right)^{\frac{1}{r}}.\] When $r=\infty$, we employ the essential supremum norm. For $f:\R\times \R^2\to \C$ we use $L_t^pL_x^q$ to denote the spacetime norm \[||f||_{p,q}=||f||_{L_t^pL_x^q(\R\times \R^2)}=\left(\int_{\R}\left(\int_{\R^2}|f(t,x)|^q\ dx\right)^{\frac{p}{q}}\ dt\right)^{\frac{1}{p}}\] with the natural modifications when either $p$ or $q$ is infinity, or when $\R\times \R^2$ is replaced by some other spacetime region. In particular, on the spacetime slab $[-T,T]\times \R^2$ we write \[||f||_{L_t^pL_x^q([-T,T]\times \R^2)}=||f||_{L_T^pL_x^q}.\]
When $q=p$ we write $L_{t,x}^p=L_{t}^pL_x^p$.

Our convention for the Fourier transform on $\R^2$ is \[\hat{f}(\xi)=\frac{1}{2\pi}\int_{\R^2}e^{-ix\cdot \xi}f(x)\ dx.\] The Fourier transform allows us to define the fractional differentiation operators \[\widehat{|\grad|^sf}(\xi)=|\xi|^s\hat{f}(\xi)\qquad \text{ and }\qquad \widehat{\langle\grad \rangle^sf}(\xi)=(1+|\xi|^2)^\frac{s}{2}\hat{f}(\xi).\] The fractional differentiation operators give rise to the (in-)homogenous Sobolev spaces. We define $\dot{H}^{1,r}(\R^2)$ and $H^{1,r}(\R^2)$ to be the completion of smooth compactly supported functions $f:\R^2\to \C$ under the norms \[||f||_{\dot{H}^{1,r}(\R^2)}=|||\grad | f||_{L_x^r}\qquad \text{ and }\qquad ||f||_{{H}^{1,r}(\R^2)}=|||\cyc{\grad} f||_{L_x^r}.\] When $r=2$, we simply write $H^{1,r}=H^{1}$ and $\dot{H}^{1,r}=\dot{H}^1$. 

Throughout this paper, we will often need to dampen the mass term in the Sobolev norm and so for $0<\mu\le 1$ we define \[||u||_{H_{\mu}^1}^2=\mu ||u||_{L_x^2}^2+||\grad u||_{L_x^2}^2.\]

In two dimensions, we say that a pair of exponents $(q,r)$ is Schr\"odinger-{\em admissible} if $\frac{1}{q}+\frac{1}{r}=\frac{1}{2}$ and $2\le q,r\le \infty$ but $(q,r)\ne (2,\infty)$. We say that a pair of exponents $(q,r)$ is {\em dual} Schr\"odinger-admissible if their H\"older conjugates $(q',r')$ are Schr\"odinger-admissible. It is straightforward to show that $(q,r)$ is dual Schr\"odinger-admissible if and only if $\frac{1}{q}+\frac{1}{r}=\frac{3}{2}$ and $(q,r)\ne (2, 1)$. If $I\times \R^2$ is a spacetime slab, we define the $S^0(I\times \R^2)$ {\em Strichartz norm} by \[||u||_{S^0(I\times \R^2)}:=\sup_{(q,r)\text{ admissible }} ||u||_{L_t^q L_x^r(I\times \R^2)}.\] The attentive reader will notice that since we are in two dimensions, we need to restrict the supremum to a closed subset of admissible pairs (as to avoid the inadmissible endpoint). Every argument in this paper requires finitely many admissible pairs, so this caveat matters little to us. Similarly, we define the $S^1(I\times \R^2)$ and $S(I\times \R^2)$ norms to be \[||u||_{S^1(I\times \R^2)}:= ||\grad u||_{S^0(I\times \R^2)}\qtq{and}||u||_{S(I\times \R^2)}:=||\langle \grad \rangle u||_{S^0(I\times \R^2)}.\]  We also use $N^0(I\times \R^2)$ to denote the dual space of $S^0(I\times \R^2)$ and \[N^1(I\times \R^2):=\{u;\, \grad u\in N^0(I\times \R^2)\}.\] As before, we define the $N(I\times \R^2)$ norm to be \[||u||_{N(I\times \R^2)}=||\cyc{\grad}u||_{N^0(I\times \R^2)}.\]

\begin{lemma}[Strichartz Estimates, \cite{MR1646048}] Let $I$ be a compact time interval, $k\in \{0,1\},$ and let $u:I\times \R^2\to \C$ be a solution to the Schr\"odinger equation \[iu_t+\Delta u=F,\] for a function $F$. Then \[|| |\grad|^k u||_{S^0(I\times \R^2)}\lesssim ||u(t_0)||_{\dot{H}^k(\R^2)}+|||\grad|^k F||_{N^0(I\times \R^2)}.\]
\end{lemma}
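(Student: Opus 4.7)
The plan is to recover this classical estimate by the $TT^*$ method, using the dispersive decay of the free Schr\"odinger propagator in two dimensions together with Hardy--Littlewood--Sobolev in time, and then passing from the convolution over all of $\R$ to the retarded Duhamel integral via the Christ--Kiselev lemma. Since $|\grad|^k$ commutes with $e^{it\Delta}$ as Fourier multipliers, applying $|\grad|^k$ to the Duhamel formula
\[u(t)=e^{i(t-t_0)\Delta}u(t_0) - i\int_{t_0}^{t} e^{i(t-s)\Delta}F(s)\,ds\]
reduces the desired bound to the case $k=0$ with data $|\grad|^k u(t_0)\in L^2_x$ and forcing $|\grad|^k F$; it therefore suffices to prove the claim for $k=0$.

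For the homogeneous piece, I would first recall the energy identity $\|e^{it\Delta}f\|_{L^2_x}=\|f\|_{L^2_x}$ and the dispersive bound $\|e^{it\Delta}f\|_{L^\infty_x}\lesssim |t|^{-1}\|f\|_{L^1_x}$, which interpolate to $\|e^{i(t-s)\Delta}g\|_{L^r_x}\lesssim |t-s|^{-(1-2/r)}\|g\|_{L^{r'}_x}$ for $2\le r\le \infty$. The standard $TT^*$ argument shows that the homogeneous Strichartz bound $\|e^{it\Delta}f\|_{L^q_t L^r_x}\lesssim \|f\|_{L^2_x}$ is equivalent to
\[\left\|\int_{\R} e^{i(t-s)\Delta}G(s)\,ds\right\|_{L^q_t L^r_x}\lesssim \|G\|_{L^{q'}_t L^{r'}_x}.\]
Applying the dispersive bound pointwise in $s$ and then Minkowski reduces this to a one-dimensional fractional integration estimate, and Hardy--Littlewood--Sobolev in $t$ closes the argument precisely when the admissibility relation $\tfrac{1}{q}+\tfrac{1}{r}=\tfrac{1}{2}$ holds and $q>2$.

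To handle the inhomogeneous piece for an arbitrary forcing $F$ and a general compact interval $I$, I would invoke the Christ--Kiselev lemma to upgrade the bound on the full-line convolution to a bound on the retarded integral $\int_{t_0}^t e^{i(t-s)\Delta}F(s)\,ds$; this step requires $q>q'$, i.e. $q>2$, which holds for every admissible pair used in defining $S^0$ in this paper (the endpoint $(2,\infty)$ having been excluded by fiat). By duality between $S^0$ and $N^0$, the resulting $L^{q'}_t L^{r'}_x$ hypothesis on $F$ can be replaced by $\|F\|_{N^0(I\times \R^2)}$, after decomposing $F$ optimally as a sum of pieces in dual admissible spaces. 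Summing the homogeneous and inhomogeneous contributions and taking the supremum over the closed subfamily of admissible pairs yields the stated $S^0$ estimate.

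The main obstacle, and the reason one might expect trouble, is the endpoint pair $(q,r)=(2,\infty)$, for which both Hardy--Littlewood--Sobolev and Christ--Kiselev fail; establishing the endpoint requires the bilinear interpolation technique of Keel--Tao. Because the paper has sidestepped this endpoint in the very definition of admissibility and only ever uses finitely many admissible pairs, this obstacle does not arise, and the classical proof sketched above suffices.
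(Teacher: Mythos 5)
The paper does not prove this lemma at all; it simply cites the standard Strichartz literature, having arranged the definition of $S^0$ so that the forbidden two-dimensional endpoint $(q,r)=(2,\infty)$ never enters. Your proposal supplies the classical non-endpoint proof, and it is correct: $|\grad|^k$ commutes with $e^{it\Delta}$, so the case $k=1$ reduces to $k=0$; the homogeneous bound follows from the $TT^*$ argument combining the $L^2$ conservation, the dispersive bound $\|e^{it\Delta}f\|_{L^\infty_x}\lesssim |t|^{-1}\|f\|_{L^1_x}$, and Hardy--Littlewood--Sobolev in time under exactly the admissibility relation $\tfrac1q+\tfrac1r=\tfrac12$ with $q>2$; Christ--Kiselev (applicable since $\tilde q'<2<q$ for every pair actually used) converts the full-line convolution bound into the retarded Duhamel bound, including the off-diagonal inhomogeneous estimates; and decomposing $F$ into pieces lying in dual admissible spaces recovers the $N^0$ formulation. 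Since the paper's $S^0$ supremum runs over a closed family avoiding $(2,\infty)$ and only finitely many pairs are ever invoked, this suffices. One small correction to your closing remark: in two dimensions the endpoint $(2,\infty)$ estimate is not merely beyond the reach of Hardy--Littlewood--Sobolev and Christ--Kiselev, it is genuinely \emph{false} for general data (the Keel--Tao endpoint theorem covers $(2,\tfrac{2d}{d-2})$ only for $d\ge 3$), which is precisely why the paper excludes it by definition rather than attempting to prove it.
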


\subsection{Pointwise Estimates} In this subsection, we record pointwise estimates needed for the well-posedness and perturbation theory in Sections $4$ and $5$, respectively. We assume that $F:\C\to \C$ is continuously differentiable, and use $\partial_z$ and $\partial_{\bar{z}}$ to denote the usual Wirtinger derivatives
\begin{align*}\partial_zF=F_z:=\tfrac{1}{2}\left(\tfrac{\partial F}{\partial x}-i\tfrac{\partial F}{\partial y}\right)\qtq{and}\partial_{\bar{z}}F=F_{\bar{z}}:=\tfrac{1}{2}\left(\tfrac{\partial F}{\partial x}+i\tfrac{\partial F}{\partial y}\right).\end{align*} The Fundamental Theorem of Calculus permits us to write
\begin{align}\label{ptwdiff}F(z)-F(w)=\int_{0}^{1}(z-w)F_z(w+\theta(z-w))+\overline{(z-w)}F_{\bar{z}}(w+\theta(z-w))\ d\theta. \end{align}We may use $\eqref{ptwdiff}$ to bound \begin{align}\label{ptwdiff2}|F(z)-F(w)|\lesssim |z-w|( |F_{z}(z)|+|F_{z}(w)|+|F_{\bar{z}}(z)|+|F_{\bar{z}}(w)|).\end{align} Of course, if $F$ is continuously twice differentiable we may obtain $\eqref{ptwdiff2}$ for $F_z$ and $F_{\bar{z}}$ in place of $F$.

 Recall our notation: \[F_1(z)=z(e^{4\pi|z|^2}-1),\qquad F_2(z)=z(e^{4\pi |z|^2}-4\pi|z|^2-1),\qquad g(z)=z|z|^2.\]  
\begin{lemma} If $z_1,z_2\in \C$, then 
\begin{align} &\label{cubicdiff}|g(z_1)-g(z_2)|\lesssim |z_1-z_2| \sum_{j=1,2}|z_j|^2\\ 
&\label{cubicderivdiff}|\partial_zg(z_1)-\partial_zg(z_2)|\lesssim |z_1-z_2|\sum_{j=1,2}|z_j|\\
&\label{F1diff}|F_1(z_1)-F_1(z_2)|\lesssim |z_1-z_2|\sum_{j=1,2}e^{4\pi |z_j|^2}-1+|z_j|^2e^{4\pi |z_j|^2}\\
&\label{F2diff}|F_2(z_1)-F_2(z_2)|\lesssim |z_1-z_2|\sum_{j=1,2}e^{4\pi|z_j|^2}-4\pi |z_j|^2-1+|z_j|^2(e^{4\pi|z_j|^2}-1)\\ &\label{F1derivdiff}|\partial_z F_1(z_1)-\partial_z F_1(z_2)|\lesssim |z_1-z_2|\sum_{j=1,2}|z_j|e^{4\pi|z_j|^2}+|z_j|^3e^{4\pi|z_j|^2}\\
&\label{F2derivdiff}|\partial_zF_2(z_1)-\partial_zF_2(z_2)|\lesssim |z_1-z_2|\sum_{j=1,2} |{z_j}|(e^{4\pi |z_j|^2}-1)+|z_j|^3e^{4\pi|z_j|^2}.
\end{align} Finally, for $i\in \{1,2\}$ and any $\ep>0$ we have 
\begin{align}  
\label{F1diff2}|F_i(z_1)-F_i(z_2)|&\lesssim_{\ep} |z_1-z_2|\sum_{j=1,2}\big(e^{4\pi(1+\ep) |z_j|^2}-1\big)\\
\label{F1derivdiff2}|\partial_z F_i(z_1)-\partial_z F_i(z_2)|&\lesssim_{\ep} |z_1-z_2|\sum_{j=1,2}\big(|z_j|+e^{4\pi(1+\ep)|z_j|^2}-1\big).
\end{align}Moreover, $\eqref{F1derivdiff}$, $\eqref{F2derivdiff}$, and $\eqref{F1derivdiff2}$ hold with $\partial_{\bar{z}}$ in place of $\partial_z$. 

\end{lemma}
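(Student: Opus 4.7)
The strategy is to reduce every inequality to the Fundamental Theorem of Calculus form \eqref{ptwdiff}/\eqref{ptwdiff2}, so the work boils down to computing Wirtinger derivatives and then bounding them pointwise in $|z|$. Once those derivative computations are in hand, inequalities \eqref{cubicdiff}--\eqref{F2derivdiff} follow directly by substituting into \eqref{ptwdiff2}; the $\varepsilon$-dependent inequalities \eqref{F1diff2}--\eqref{F1derivdiff2} then require one extra step where polynomial prefactors $|z|^k$ are absorbed into a slightly enlarged exponential.

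For the cubic inequalities I would first compute $\partial_z g = 2|z|^2$ and $\partial_{\bar z}g = z^2$, both bounded by $|z|^2$, which via \eqref{ptwdiff2} yields \eqref{cubicdiff}; applying the same identity to $\partial_z g$ (whose Wirtinger derivatives are $2\bar z$ and $2z$) gives \eqref{cubicderivdiff}. For $F_1$, a direct computation gives
\begin{align*}
\partial_z F_1(z) &= (e^{4\pi|z|^2}-1)+4\pi|z|^2 e^{4\pi|z|^2}, \\
\partial_{\bar z}F_1(z) &= 4\pi z^2 e^{4\pi|z|^2},
\end{align*}
so both Wirtinger derivatives are bounded by the right-hand side of \eqref{F1diff}, and \eqref{ptwdiff2} closes that line; \eqref{F2diff} is obtained by the analogous computation on $F_2$, where the subtracted Taylor terms in $F_2$ convert the lone factor $e^{4\pi|z|^2}$ into $e^{4\pi|z|^2}-1$. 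Differentiating once more produces terms proportional to $|z|e^{4\pi|z|^2}$ and $|z|^3 e^{4\pi|z|^2}$ for $F_1$, and $|z|(e^{4\pi|z|^2}-1)$ together with $|z|^3 e^{4\pi|z|^2}$ for $F_2$, giving \eqref{F1derivdiff} and \eqref{F2derivdiff}; the $\bar z$ variants are identical since the Wirtinger derivatives mentioned above are symmetric in $|z|$.

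For the $\varepsilon$-dependent bounds, the key elementary lemma I would establish first is that for every $k\ge 1$ and $\varepsilon>0$ one has
\begin{align*}
|z|^{2k}\, e^{4\pi|z|^2} \;\lesssim_{k,\varepsilon}\; e^{4\pi(1+\varepsilon)|z|^2}-1,
\end{align*}
which follows by writing $|z|^{2k}\le C_{k,\varepsilon}(e^{4\pi\varepsilon|z|^2}-1)$ for $|z|$ bounded below and by comparing Taylor expansions for $|z|$ small. Feeding this into \eqref{F1diff}/\eqref{F2diff} (note that $e^{4\pi|z|^2}-4\pi|z|^2-1$ is already dominated by $|z|^4 e^{4\pi|z|^2}$) yields \eqref{F1diff2}. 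For \eqref{F1derivdiff2} the same lemma handles $|z|^3 e^{4\pi|z|^2}$, and $|z|e^{4\pi|z|^2}$ is split at $|z|=1$: for $|z|\le 1$ it is comparable to $|z|$, while for $|z|\ge 1$ it is bounded by $|z|^2 e^{4\pi|z|^2}$ and hence by the exponential term — which is exactly why the extra $|z|$ summand is needed on the right-hand side.

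No step is genuinely hard, but the only delicate book-keeping point is the small-$|z|$ behaviour in \eqref{F1derivdiff2}: the term $|z|e^{4\pi|z|^2}$ vanishes only to first order at the origin whereas $e^{4\pi(1+\varepsilon)|z|^2}-1$ vanishes to second order, which forces the linear $|z|$ correction and explains the asymmetry between \eqref{F1diff2} and \eqref{F1derivdiff2}.
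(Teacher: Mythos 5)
Your proposal is correct and follows exactly the route the paper intends: the lemma is stated immediately after \eqref{ptwdiff}--\eqref{ptwdiff2} precisely so that each bound follows from computing the Wirtinger derivatives of $g$, $F_1$, $F_2$ (and of their first derivatives) and substituting into \eqref{ptwdiff2}, with the $\ep$-dependent variants obtained by absorbing the polynomial prefactors into a slightly larger exponential, just as you do. Your computations and the small-$|z|$ bookkeeping explaining the extra $|z_j|$ term in \eqref{F1derivdiff2} are all accurate.
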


\subsection{Endpoint Sobolev Inequalities}
In two space dimensions, Sobolev embedding guarantees that for each $2\le p<\infty$ or $2<r\le \infty$ there exists some $c_r,c_p>0$ so that 
\begin{align}
||u||_{L_x^p(\R^2)}&\le c_{p}||u||_{H_x^1(\R^2)} \label{sob}.\\ 
||u||_{L_x^\infty(\R^2)}&\le c_{r}||u||_{H_x^{1,r}(\R^2)} \label{sob2} .
\end{align} 

The failure of $\eqref{sob}$ to hold at the endpoint $p=\infty$ and of $\eqref{sob2}$ to hold with $r=2$ has motivated mathematicians over the last half century to find appropriate substitutes. This has lead to two lines of inquiry, essentially dual to one another, and with essentially dual answers. The first line of inquiry attempts to discover for which functions $f:\R\to \R$ we have \begin{align}\label{1I}\sup_{||u||_{H_x^1}\le 1}\int_{\R^2}f(|u|)\ dx<\infty.\end{align}  The second line of inquiry attempts to discover for which functions $g:\R\to \R$ we have \begin{align}\label{2I}||u||_{L_x^\infty(\R^2)}\le g(||u||_{H_x^{1,r}(\R^2)}),\end{align} for $r>2$ and $u\in H_x^1$. 

The first line of investigation lead to the family of Moser--Trudinger inequalities which show that $\eqref{1I}$ holds for a variety of functions of square exponential growth. The second line of investigation lead to the family of Br\'ezis--Wainger--Gallou\"et inequalities, which show that $\eqref{2I}$ holds for a variety of functions of $\sqrt{\log}$ growth. 

\subsubsection{Moser--Trudinger Inequalities} In \cite{MR0216286}, Trudinger observed that one could take $c_p=c\cdot \sqrt{p}$ in $\eqref{sob}$ for some constant $c$ independent of $p$. By expanding into power series, he found that \[||e^{\alpha |u|^2}-1||_{L_x^1}=\sum_{k=1}^{\infty}\tfrac{\alpha^k ||u||_{2k}^{2k}}{k!}\le \sum_{k=1}^{\infty}\tfrac{(2kc^2\alpha)^k}{k!}<\infty\] for $0<\alpha\ll 1$ sufficiently small. A few years later, in \cite{MR0301504}, Moser applied symmetrization techniques and found that the constant $\alpha=4\pi$ was optimal. Since then, mathematicians have discovered a wide variety of Moser--Trudinger(-type) inequalities, which are indispensable in our analysis. In this paper we rely on a few, which are listed below.

In the energy-subcritical setting, we see that the exponential nonlinearity behaves like the first nonzero term in its Taylor approximation. 
 
\begin{proposition}[\cite{MR1646323}]For each $\alpha\in [0,4\pi)$ there exists $c=c(\alpha)$ so that 
\label{MT1}
\begin{align}||e^{\alpha |u|^2}-1||_{L_x^1(\R^2)}\le c||u||_{L_x^2(\R^2)}^2,\end{align}
uniformly for $u\in H_{x}^1(\R^2)$ with $||\grad u||_{L_x^2(\R^2)}\le 1$. 
\end{proposition}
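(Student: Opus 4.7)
The plan is to split the integral over the sublevel set $\{|u|<\lambda\}$ and its complement for a parameter $\lambda>0$ to be chosen, and treat the two regions using very different tools: on the former an elementary Taylor bound that automatically produces a factor of $||u||_{L_x^2}^2$, and on the latter Moser's classical sharp (bounded-domain) inequality, together with a Chebyshev estimate on the measure of the superlevel set.

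On $\{|u|<\lambda\}$, the pointwise estimate $e^{\alpha s^{2}}-1\le \alpha s^{2}e^{\alpha \lambda^{2}}$ (valid for $|s|\le \lambda$ via the convexity bound $e^{x}-1\le xe^{x}$ for $x\ge 0$) gives at once
\[\int_{\{|u|<\lambda\}}(e^{\alpha|u|^{2}}-1)\,dx\le \alpha\, e^{\alpha\lambda^{2}}\,||u||_{L_x^{2}}^{2}.\]
On the complementary set $\Omega:=\{|u|\ge \lambda\}$, I would introduce the truncation $v:=(|u|-\lambda)_{+}\cdot u/|u|$, which satisfies $||\grad v||_{L_x^{2}}\le ||\grad u||_{L_x^{2}}\le 1$ and vanishes on $\partial \Omega$. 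On $\Omega$ one has $|u|=|v|+\lambda$, so AM--GM yields $|u|^{2}\le (1+\varepsilon)|v|^{2}+C_{\varepsilon}\lambda^{2}$ for every $\varepsilon>0$. Choosing $\varepsilon$ so small that $(1+\varepsilon)\alpha<4\pi$, and applying Moser's sharp inequality to $v$ on $\Omega$ (together with the Chebyshev bound $|\Omega|\le \lambda^{-2}||u||_{L_x^{2}}^{2}$), produces
\[\int_{\Omega}(e^{\alpha|u|^{2}}-1)\,dx\le e^{C_{\varepsilon}\alpha\lambda^{2}}\int_{\Omega}e^{(1+\varepsilon)\alpha|v|^{2}}\,dx\le C\,e^{C_{\varepsilon}\alpha\lambda^{2}}\lambda^{-2}\,||u||_{L_x^{2}}^{2}.\]
Adding the two contributions and absorbing all of the $\lambda$- and $\varepsilon$-dependent prefactors into a single constant $c(\alpha)$ yields the claimed inequality. (The parameter $\lambda$ may be fixed at the end in terms of $\alpha$, e.g.\ $\lambda=1$, since nothing on the right-hand side depends on $u$ beyond the stated norms.)

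The main subtlety in this plan is the invocation of Moser's inequality with constant depending only on $|\Omega|$ (and not on its shape): this is precisely the sharp form of Moser's original theorem, proved via Schwarz symmetrization, whose critical exponent $4\pi$ is exactly what forces the trade-off $(1+\varepsilon)\alpha<4\pi$. Because $\varepsilon$ must be chosen closer and closer to $0$ as $\alpha$ approaches $4\pi$, the constants $C_{\varepsilon}$, and hence $c(\alpha)$, blow up as $\alpha\uparrow 4\pi$, which mirrors the exclusion of the endpoint from the hypothesis. One minor point to verify is that $\Omega$, though possibly unbounded, has finite measure (again by Chebyshev), so Moser's inequality applies in its rearrangement form.
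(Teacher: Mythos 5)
Your argument is correct, but note that the paper does not prove this proposition at all: it is quoted directly from the cited reference (Adachi--Tanaka), so the only meaningful comparison is with that source. Your route --- split at the level set $\{|u|<\lambda\}$, use the elementary bound $e^{x}-1\le xe^{x}$ there (which manufactures the factor $\|u\|_{L_x^2}^2$), and on $\{|u|\ge\lambda\}$ pass to the truncation with $|v|=(|u|-\lambda)_+$, trade $|u|^2\le(1+\varepsilon)|v|^2+C_\varepsilon\lambda^2$, and invoke Moser together with Chebyshev's bound $|\Omega|\le\lambda^{-2}\|u\|_{L_x^2}^2$ --- is essentially the standard proof of this inequality and is very close in spirit to Adachi--Tanaka, who instead symmetrize first and split in the radial variable (elementary bound far from the origin where the rearranged function is pointwise small, Moser-type estimate near the origin). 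Two small points of hygiene: for complex-valued $u$ it is cleaner to truncate the modulus, $w=(|u|-\lambda)_+$, and use the diamagnetic inequality $|\nabla|u||\le|\nabla u|$ rather than the phase-carrying truncation $u/|u|\,(|u|-\lambda)_+$ (though a local polar decomposition does show your $v$ also satisfies $|\nabla v|\le|\nabla u|$ a.e., since $|u|\ge\lambda>0$ on $\Omega$); and, as you yourself flag, the level set $\Omega$ is merely of finite measure, not a nice bounded domain, so the correct invocation of Moser is the rearrangement form $\int(e^{4\pi w^2}-1)\,dx\le C\,|\{w\ne 0\}|$ for $\|\nabla w\|_{L_x^2}\le 1$, obtained by Schwarz symmetrization (also remember to subtract the $1$ before citing the $C|\Omega|$ bound, or absorb the extra $|\Omega|$ via Chebyshev, which is harmless). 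With those adjustments the estimate closes exactly as you say, with $c(\alpha)$ deteriorating as $\alpha\uparrow 4\pi$, consistent with the failure of the inequality at the endpoint under the $\dot H^1$ normalization alone.
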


\begin{proposition}[\cite{MR3161603}] For each $\alpha\in [0,4\pi)$ and $s\ge 1$, there exists a constant $c=c(\alpha,s)$ so that 
\label{MT4}
\begin{align}|||u|^s e^{\alpha |u|^2}||_{L_x^1(\R^2)}\le c||u||_{L_x^s}^s,\end{align}
uniformly for $u\in H_x^1(\R^2)$ with $||\grad u||_{L_x^2(\R^2)}\le 1$. 
\end{proposition}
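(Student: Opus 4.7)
The plan is to reduce to a one-dimensional radial problem via symmetric decreasing rearrangement and then to exploit Moser's classical pointwise bound for radial $H^1$ functions, precisely in the regime where the strict inequality $\alpha<4\pi$ leaves the needed room.

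First I would observe that we may assume $u=u(r)\geq 0$ is radial and nonincreasing. Replacing $u$ by $|u|$ leaves both sides of the claimed inequality unchanged, and Kato's inequality ensures $\|\nabla |u|\|_{L^2}\leq \|\nabla u\|_{L^2}$. Passing to the symmetric decreasing rearrangement $u^\ast$ preserves every $L^p$ norm as well as the integral $\int F(|u|)\,dx$ for any nonnegative Borel $F$; the P\'olya--Szeg\H{o} inequality then guarantees $\|\nabla u^\ast\|_{L^2}\leq \|\nabla u\|_{L^2}\leq 1$. The decay $u(r)\to 0$ as $r\to\infty$ is automatic from $u\in L^s(\R^2)$.

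Next, I would set $r_1:=\sup\{r\geq 0:u(r)\geq 1\}$, so that $\{u\geq 1\}=B_{r_1}$, and split the integral as $\int_{\{|u|<1\}}+\int_{B_{r_1}}$. The first piece is bounded trivially by $e^\alpha\|u\|_{L^s}^s$, since $e^{\alpha|u|^2}\leq e^\alpha$ on $\{|u|\leq 1\}$. For the second piece, Cauchy--Schwarz applied to $u(r_0)-u(r_1)=-\int_{r_0}^{r_1}u'(\rho)\,d\rho$ produces Moser's classical radial bound
\[
|u(r_0)-u(r_1)|^2 \leq \log(r_1/r_0)\int_{r_0}^{r_1}|u'(\rho)|^2\rho\,d\rho \leq \frac{\log(r_1/r_0)}{2\pi}\|\nabla u\|_{L^2}^2 \leq \frac{\log(r_1/r_0)}{2\pi}.
\]
Combined with $u(r_1)\leq 1$ and the weighted AM-GM inequality $(a+b)^2\leq(1+\epsilon)a^2+(1+\epsilon^{-1})b^2$, this yields, for every $\epsilon>0$,
\[
|u(r_0)|^2 \leq (1+\epsilon)+\frac{1+\epsilon^{-1}}{2\pi}\log(r_1/r_0).
\]

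The hypothesis $\alpha<4\pi$ will be used precisely in the choice of $\epsilon$: pick $\epsilon>\alpha/(4\pi-\alpha)$ so that $\gamma:=2-(1+\epsilon^{-1})\alpha/(2\pi)>0$. Exponentiating the bound above and noting that $|u(r_0)|^s$ is controlled by a polynomial in $\log(r_1/r_0)$, the substitution $r_0=r_1 e^{-t}$ recasts the inner integral as
\[
2\pi e^{(1+\epsilon)\alpha}r_1^2 \int_0^\infty \Bigl((1+\epsilon)+\tfrac{1+\epsilon^{-1}}{2\pi}t\Bigr)^{s/2}e^{-\gamma t}\,dt,
\]
a finite gamma-type integral $C_{\alpha,s}$. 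Finally, since $u\geq 1$ throughout $B_{r_1}$, we have $\pi r_1^2\leq\int_{B_{r_1}}|u|^s\,dx\leq\|u\|_{L^s}^s$, so combining both pieces yields the proposition with constant $c=e^\alpha+C_{\alpha,s}/\pi$. The main obstacle is the sharpness constraint $(1+\epsilon^{-1})\alpha<4\pi$, which forces $\epsilon\to\infty$ as $\alpha\to 4\pi^{-}$ and admits no solution at the endpoint~--- reflecting the genuine failure of the inequality there, witnessed by Moser's extremal sequence.
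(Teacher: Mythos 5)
Your argument is correct, and since the paper does not prove Proposition \ref{MT4} at all --- it is imported as a black box from \cite{MR3161603} --- there is no in-paper proof to compare against; what you have written is the standard self-contained proof of such $L^s$-scaled Moser--Trudinger bounds. The reduction via $|u|$, the diamagnetic inequality, equimeasurability of the rearrangement, and P\'olya--Szeg\H{o} is legitimate (note the equimeasurability statement needs $F(0)=0$, which holds here since $F(t)=t^s e^{\alpha t^2}$ vanishes at the origin), the split at the level set $\{u\ge 1\}$ handles the subcritical region trivially, and the radial Moser estimate $|u(r_0)-u(r_1)|^2\le \tfrac{1}{2\pi}\log(r_1/r_0)\,\|\grad u\|_{L_x^2}^2$ combined with the Peter--Paul inequality and the substitution $r_0=r_1e^{-t}$ produces a convergent gamma-type integral precisely because $\epsilon$ can be chosen with $(1+\epsilon^{-1})\alpha<4\pi$; the closing observation $\pi r_1^2\le \|u\|_{L_x^s}^s$ is exactly what converts the bound into the claimed form, and your remark about the failure as $\alpha\to 4\pi^{-}$ correctly identifies where the argument (and the inequality) breaks. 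Two minor points worth tightening if this were written out in full: justify that the rearranged radial profile is absolutely continuous on compact subsets of $(0,\infty)$ (this follows from $\int_a^b|u'|\,d\rho\le(\log (b/a))^{1/2}\bigl(\int_a^b|u'|^2\rho\,d\rho\bigr)^{1/2}$, which also gives $u(r_1)\le 1$ by continuity), and dispose explicitly of the degenerate cases $\alpha=0$, $r_1=0$ (no superlevel set), and $\|u\|_{L_x^s}=\infty$, all of which are trivial.
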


In the energy-critical setting, the previous proposition fails if we only require control over the $\dot{H}^1$ norm. However, if we require the full $H_x^1$ norm to be sufficiently small, we recover a useful substitute. 

\begin{proposition}[\cite{MR2109256}] There exists a constant $c$ so that 
\label{MT2}
\begin{align}||e^{4\pi |u|^2}-1||_{L_x^1(\R^2)}\le c,\end{align}
uniformly for $u\in H_x^1(\R^2)$ with $||u||_{H_x^1(\R^2)}\le 1$. 
\end{proposition}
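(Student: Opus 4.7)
The plan is to reduce the endpoint Moser--Trudinger inequality on $\R^2$ to Moser's classical inequality on a bounded domain by exploiting the full $H_x^1$ bound (not merely the $\dot{H}_x^1$ bound). This is essentially Ruf's strategy. First I would use Schwarz symmetric-decreasing rearrangement to reduce to the case where $u \geq 0$ is radially symmetric and non-increasing; this is admissible because rearrangement preserves $||u||_{L_x^2}$, does not increase $||\grad u||_{L_x^2}$ by P\'olya--Szeg\"o, and preserves $\int F(|u|)\,dx$ for any non-decreasing $F$, so both the hypothesis and the quantity to be controlled are left invariant.

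Next, I would split $\R^2$ along $B_R := \{u > 1\}$, which by radial monotonicity is a Euclidean ball with $\pi R^2 \leq ||u||_{L_x^2}^2 \leq 1$ by Chebyshev. On the complement, monotonicity of $y\mapsto (e^y-1)/y$ yields $e^{4\pi u^2} - 1 \leq (e^{4\pi}-1) u^2$, giving the immediate bound
\[\int_{\{u \leq 1\}} (e^{4\pi u^2} - 1)\,dx \leq (e^{4\pi}-1) ||u||_{L_x^2}^2 \leq e^{4\pi}-1.\]
For the main piece on $B_R$, set $v := (u-1)_+$. After a routine approximation by smooth functions, $v \in H_0^1(B_R)$ with $\grad v = \grad u \cdot \chi_{B_R}$; since $u \geq 1$ on $B_R$ forces $||u||_{L_x^2(B_R)}^2 \geq \pi R^2$, the gradient budget satisfies $||\grad v||_{L_x^2(B_R)}^2 \leq ||\grad u||_{L_x^2}^2 \leq 1 - ||u||_{L_x^2}^2 \leq 1 - \pi R^2$. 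Writing $4\pi u^2 = 4\pi v^2 + 8\pi v + 4\pi$ on $B_R$ and applying AM--GM in the form $8\pi v \leq \eta v^2 + 16\pi^2/\eta$,
\[\int_{B_R} e^{4\pi u^2}\,dx \leq e^{4\pi + 16\pi^2/\eta}\int_{B_R} e^{(4\pi+\eta)v^2}\,dx.\]
Rescaling $w := v/||\grad v||_{L_x^2}$ reduces the remaining integral, via Moser's classical inequality on the bounded domain $B_R$, to the regime $(4\pi+\eta)(1-\pi R^2) \leq 4\pi$, in which $\int_{B_R} e^{(4\pi+\eta)v^2}\,dx \lesssim |B_R|$.

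The hard part is obtaining uniformity as $R \to 0$: the saturating choice of $\eta$ produces a prefactor $e^{c/R^2}$ that dwarfs the $|B_R| \sim R^2$ gain from Moser. This is precisely the sharp endpoint content of Ruf's theorem. It is resolved either by introducing a secondary cutoff level $t_0 = t_0(R)$ tuned against the radial Sobolev decay $u(R) \lesssim R^{-1/2}||u||_{H_x^1}$ so as to keep the concentration quantitative, or by passing to the conformal coordinate $s = -\log r$ on the half-line, in which the $H_x^1$ constraint becomes a weighted one-dimensional Sobolev norm whose $L_x^2$ weight rules out concentration at $r = 0$ and permits the application of a sharp one-dimensional inequality of Adams type.
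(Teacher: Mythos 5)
The paper itself offers no proof of this proposition: it is quoted as a black box from Ruf's theorem \cite{MR2109256}, so there is no internal argument to compare yours against. Judged on its own terms, your sketch correctly reproduces the routine reductions in Ruf's proof --- symmetric-decreasing rearrangement, splitting along the level set $\{u>1\}$, the bound $(e^{4\pi}-1)\|u\|_{L_x^2}^2$ on the complement, and the passage to $v=(u-1)_+\in H_0^1(B_R)$ with $\|\grad v\|_{L_x^2}^2\le 1-\pi R^2$ --- but it stops exactly where the theorem lives. The constraint $(4\pi+\eta)(1-\pi R^2)\le 4\pi$ forces $\eta\lesssim R^2$, so the AM--GM prefactor $e^{16\pi^2/\eta}$ is of size $e^{c/R^2}$ and overwhelms the factor $|B_R|\sim R^2$ supplied by Moser's inequality on the ball; the concentration regime $R\to 0$ is therefore untouched by the argument as written, and that regime is the only nontrivial one at the sharp exponent (for $R$ bounded below, or for any exponent $\alpha<4\pi$, your computation already suffices, and the subcritical statement is in any case Proposition \ref{MT1}).

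Your final paragraph acknowledges this and gestures at two fixes, but neither is executed, and the first is not obviously sound: the radial estimate $u(R)\lesssim R^{-1/2}\|u\|_{H_x^1}$ controls $u$ at the boundary of $B_R$, not the growth of $u$ inside $B_R$, which is precisely where the exponential integral concentrates. Closing the gap requires the genuinely hard part of Ruf's analysis --- a quantitative accounting of how the Dirichlet energy splits between $B_R$ and its complement together with an analysis of normalized concentrating sequences, or equivalently the one-dimensional conformal reduction carried out with exact constants --- and none of that is present. As it stands the proposal is a faithful outline of the known strategy with the decisive endpoint step missing, not a proof; for the purposes of this paper the correct move is simply to cite \cite{MR2109256}, as the text does.
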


A natural question arises from the previous concession, namely: what is the best bound we may obtain by requiring only that the $\dot{H}^1$ norm be sufficiently small. This is answered in the following proposition.

\begin{proposition}[\cite{Ibrahim:2011aa}] There exists a constant $c$ so that 
\label{MT3}
\begin{align}\int_{\R^2}\frac{e^{4\pi |u|^2}-1}{(1+|u|)^2}\ dx\le c||u||_{L_x^2}^2,\end{align}
uniformly for $u\in H_x^1(\R^2)$ with $||\grad  u||_{L_x^2(\R^2)}\le 1$. 
\end{proposition}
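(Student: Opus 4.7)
My plan is to symmetrize to the radial decreasing setting, convert the problem to a one-dimensional integral inequality via Moser's change of variables, and then exploit the $(1+|u|)^{-2}$ factor to close an otherwise borderline estimate at the critical exponent.

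Since $s\mapsto(e^{4\pi s^2}-1)/(1+s)^2$ is non-decreasing on $[0,\infty)$ and Schwarz rearrangement preserves $\|u\|_{L^2}$ while not increasing $\|\nabla u\|_{L^2}$, I may assume $u\ge 0$ is radial and decreasing. The substitution $w(t):=2\sqrt\pi\,u(e^{-t/2})$ is an isometry of Dirichlet energy, $\int_\R w_t^2\,dt=\|\nabla u\|_{L^2}^2\le 1$, transforms $\|u\|_{L^2}^2$ into $\tfrac14\int_\R w^2 e^{-t}\,dt$, and reduces the inequality to
\[\int_\R\frac{e^{w^2}-1}{(1+cw)^2}\,e^{-t}\,dt \;\lesssim\; \int_\R w^2\,e^{-t}\,dt, \qquad c=\tfrac{1}{2\sqrt\pi},\]
for non-decreasing $w:\R\to[0,\infty)$ with $w(-\infty)=0$ and $\|w_t\|_{L^2(\R)}\le 1$. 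Let $t_0$ be the point with $w(t_0)=1$ (if none exists, $e^{w^2}-1\le e\,w^2$ globally and we are done) and let $t_\infty$ denote the smallest time at which $w$ first attains $w_\infty:=\lim_{s\to\infty}w(s)$.

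On $\{t\le t_0\}$ the bound $w\le 1$ yields $(e^{w^2}-1)/(1+cw)^2\le e\,w^2$, contributing $\lesssim\int w^2 e^{-t}\,dt$. On $\{t\ge t_0\}$, the convexity of $w^2-t$ along the extremal (Moser-bubble-type, linear-then-flat) profile forces the integrand $e^{w^2-t}/(1+cw)^2$ to concentrate near the two endpoints $t_0$ and $t_\infty$; in between, $w^2$ grows slower than $t$, so the exponential is exponentially small. Near $t_0$ the denominator is $O(1)$ and the exponential is $\lesssim e^{1-t_0}$; near $t_\infty$, Cauchy--Schwarz gives $w_\infty\le 1+\sqrt{t_\infty-t_0}$, hence $e^{w_\infty^2-t_\infty}=O(1)$, while the denominator contributes $(1+cw_\infty)^{-2}\sim 1/w_\infty^2$. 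Summing, the tail is $\lesssim e^{-t_0}(1+w_\infty^{-2})$. Meanwhile $\int w^2 e^{-t}\,dt\ge e^{-t_0}$ from $w\ge 1$ on $[t_0,\infty)$ absorbs the first term, and a refined lower bound coming from $w\approx w_\infty$ on a long interval absorbs the $w_\infty^{-2}$ piece.

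The main obstacle is the sharp accounting of this two-endpoint concentration when the extremal profile degenerates, i.e.\ when $(1-\lambda)^{-1}\to\infty$ with $\lambda:=\int_{t_0}^\infty w_\tau^2\,d\tau$. A crude Cauchy--Schwarz bound $w(t)^2\le 1+2\sqrt{\lambda(t-t_0)}+\lambda(t-t_0)$ is a vast over-estimate on the interior of $[t_0,t_\infty]$, producing Gaussian-type integrals with peak value $e^{\lambda/(1-\lambda)}$ that diverge as $\lambda\uparrow 1$; the actual integrand is exponentially smaller because $w^2$ grows only quadratically near $t_0$ and lags linearly behind $t$ thereafter. Tracking the true size of the tail integrand---and in particular the $1/w_\infty^2\sim 1/t_\infty$ contribution of the denominator near $t_\infty$---constitutes the technical heart of the argument. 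This margin is precisely why any weakening of the power $2$ in $(1+|u|)^{-2}$ would invalidate the estimate.
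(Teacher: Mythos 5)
There is a genuine gap, and in fact the paper contains no proof to compare against: Proposition \ref{MT3} is quoted from \cite{Ibrahim:2011aa} (the ``exact growth'' Trudinger--Moser inequality on the plane), whose proof is the content of an entire paper. Your reduction is the standard and correct first step (Schwarz rearrangement, then the Moser change of variables; the constants and the identity $\|u\|_{L^2}^2=\tfrac14\int_\R w^2e^{-t}\,dt$ check out), but everything after that is a sketch whose crucial estimate you yourself flag as ``the technical heart'' and do not supply. Worse, the bookkeeping you propose to close the argument is not merely incomplete but false as stated. Setting $\lambda:=\int_{t_0}^\infty w_t^2\,dt$, a near-extremal profile (linear on $[t_0,T]$, then constant, with $T-t_0\sim\lambda/(1-\lambda)^2$) makes the tail integral of size $\sim e^{-t_0}\,e^{\lambda/(1-\lambda)}(1-\lambda)^2$, which is \emph{not} $O\bigl(e^{-t_0}(1+w_\infty^{-2})\bigr)$ as $\lambda\uparrow1$; the $e^{\lambda/(1-\lambda)}$ enhancement you dismiss as an artifact of crude Cauchy--Schwarz is genuinely attained, so the claim that ``the actual integrand is exponentially smaller'' is wrong, and the absorption by the trivial lower bound $\int_{t_0}^\infty w^2e^{-t}\,dt\ge e^{-t_0}$ cannot work.

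What actually saves the inequality is a balance you never invoke: if $\lambda$ is close to $1$, then only $1-\lambda$ of Dirichlet energy is available to climb from $w=0$ to $w=1$, which forces the climb to last a time $L\gtrsim 1/(1-\lambda)$ before $t_0$; during that climb the weight $e^{-t}$ makes the right-hand side at least of order $e^{-t_0}e^{1/(1-\lambda)}(1-\lambda)^2$. It is this pre-$t_0$ contribution, compared against the tail $e^{-t_0}e^{\lambda/(1-\lambda)}(1-\lambda)^2$, that yields a bounded ratio, and the matching factors $(1-\lambda)^2$ on both sides come precisely from the denominator $(1+|u|)^2$ --- this is where the sharpness of the power $2$ lives, not in the absorption scheme you describe. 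Carrying this out uniformly (not just for the extremal profile: general admissible $w$ need not concentrate ``at two endpoints,'' and $w_\infty$ may be infinite or unattained, so your $t_\infty$ is not even well defined) requires the kind of level-set/iteration analysis done in \cite{Ibrahim:2011aa}. As it stands, your proposal reproduces the known reduction and then restates, rather than resolves, the core difficulty, while the quantitative shortcut offered in its place is incorrect.
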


\subsubsection{Br\'ezis--Wainger--Gallou\"et Inequalities}

In our analysis we will see that the most difficult part in controlling the nonlinearities in $\eqref{PDE}$ and $\eqref{PDE2}$ are those times when $u(t)$ is large in $L_x^\infty$. We have two estimates in our arsenal to control the $L_x^\infty$ norm, the latter of which is more powerful. The first is the Morrey Embedding $\eqref{sob2}$, and the second is the sharp Br\'ezis--Wainger--Gallou\"et inequality stated in Proposition $\ref{SBWG}$. Though both will prove useful, the strength of Proposition $\ref{SBWG}$ over $\eqref{sob2}$ comes from our knowledge of the explicit constant.

  \begin{proposition}[Sharp Br\'ezis--Wainger--Gallou\"et, \cite{MR2578540,MR2280178}] \label{SBWG}Let $2< r<\infty$ and $0<\mu \le 1$. Then for any $\alpha>1$ there exists a constant $C_{\alpha}$ depending on $\alpha$ so that 
     \begin{align}\label{bwg}||u||_{L_x^\infty}^2\le \frac{\alpha}{4\pi}\frac{2r}{r-2}||u||_{H_{\mu}^1}^2\log\left[C_{\alpha}+\frac{c_{r} (8/\mu)^{1-\frac{2}{r}}||u||_{H_x^{1,r}}}{||u||_{H_{\mu}^1}}\right],\end{align} where $c_{r}$ is the constant appearing in $\eqref{sob2}$.
 \end{proposition}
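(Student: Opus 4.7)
The plan is to use a Fourier-side frequency splitting together with an optimization argument. Starting from $|u(x)|\le \frac{1}{2\pi}\int_{\R^2}|\hat{u}(\xi)|\,d\xi$, split the integral at a radius $R>0$ to be chosen. For the piece $\{|\xi|\le R\}$, apply Cauchy--Schwarz with the weight $(\mu+|\xi|^2)^{1/2}$; the radial two-dimensional computation
\[\int_{|\xi|\le R}\frac{d\xi}{\mu+|\xi|^2}=\pi\log(1+R^2/\mu)\]
yields the bound $\tfrac{1}{2\sqrt{\pi}}\sqrt{\log(1+R^2/\mu)}\,||u||_{H_\mu^1}$, which, upon squaring, contributes the factor $\tfrac{1}{4\pi}$ to the final estimate.

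For the complementary piece $\{|\xi|>R\}$, apply H\"older's inequality in the frequency variable with exponents $(r,r')$ after absorbing a factor of $|\xi|^{-1}$; the tail integral $\int_{|\xi|>R}|\xi|^{-r}\,d\xi=\tfrac{2\pi}{r-2}R^{-(r-2)}$ is finite precisely because $r>2$ and supplies the $\tfrac{1}{r-2}$ factor that ultimately becomes $\tfrac{2r}{r-2}$ in the stated bound. Hausdorff--Young applied to $\widehat{\grad u}$ (using $r'\le 2$), combined with the Morrey embedding \eqref{sob2}, lets one control the remaining factor by $c_r ||u||_{H_x^{1,r}}$, producing a high-frequency bound of the shape $K_r R^{-(1-2/r)}||u||_{H_x^{1,r}}$ for an explicit constant $K_r$ proportional to $c_r$.

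Squaring via $(a+b)^2\le \alpha a^2+\tfrac{\alpha}{\alpha-1}b^2$ for $\alpha>1$ introduces the free parameter $\alpha$. The final step is to choose $R$ so the high-frequency contribution is absorbed into the logarithm of the low-frequency one: selecting $R^2/\mu$ to equal the $\tfrac{2r}{r-2}$ power of $c_r(8/\mu)^{1-2/r}||u||_{H_x^{1,r}}/||u||_{H_\mu^1}$, and using $\log(R^2/\mu)=\tfrac{2r}{r-2}\log[\cdots]$, delivers the stated multiplicative factor $\tfrac{2r}{r-2}$; the residual $\tfrac{\alpha}{\alpha-1}$ from the squaring step is absorbed into the $\alpha$-dependent constant $C_\alpha$. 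The main obstacle is the careful constant-tracking: one must verify that the specific power $\tfrac{2r}{r-2}$, the specific factor $(8/\mu)^{1-2/r}$, and the arbitrary $\alpha>1$ all emerge exactly as stated, which requires no conceptual novelty beyond the Fourier-splitting idea but does demand care with bookkeeping.
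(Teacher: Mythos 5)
First, a point of reference: the paper does not prove Proposition \ref{SBWG} at all; it is quoted from \cite{MR2578540,MR2280178}, so your proposal can only be measured against the arguments in those references, which are more delicate than a pure Fourier-splitting.

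Your low-frequency step is fine: Cauchy--Schwarz against the weight $(\mu+|\xi|^2)^{1/2}$ together with $\int_{|\xi|\le R}(\mu+|\xi|^2)^{-1}d\xi=\pi\log(1+R^2/\mu)$ does produce the $\tfrac{1}{4\pi}$ prefactor and the logarithm. The genuine gap is in the high-frequency piece. After H\"older with $|\xi|^{-1}\in L^r(|\xi|>R)$ you are left with $\bigl\| |\xi|\hat u\bigr\|_{L^{r'}}$, $r'=\tfrac{r}{r-1}<2$, and you propose to bound this by $\|\nabla u\|_{L^r}$ via Hausdorff--Young ``using $r'\le 2$''. But Hausdorff--Young runs the other way: it controls $\|\hat g\|_{L^{p'}}$ by $\|g\|_{L^p}$ only for $p\le 2$, i.e.\ it bounds the transform in an exponent $\ge 2$. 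There is no inequality of the form $\|\widehat{\nabla u}\|_{L^{r'}}\lesssim \|\nabla u\|_{L^r}$ with $r>2$; a randomized sum of $N$ well-separated frequency bumps at height $\sim R$ has $\||\xi|\hat u\|_{L^{r'}}\sim RN^{1/r'}$ while $\|\nabla u\|_{L^r}\sim RN^{1/2}$, so the ratio diverges. Thus the step that is supposed to produce the factor $c_r\|u\|_{H^{1,r}}$ is not merely unjustified but false, and the Morrey embedding \eqref{sob2} does not repair it (it bounds $\|u\|_{L^\infty}$, which is what you are trying to estimate, not a Fourier-side $L^{r'}$ quantity).

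The standard way to salvage a frequency-splitting proof is to treat the high frequencies in physical space: by Littlewood--Paley and Bernstein, $\|P_N u\|_{L^\infty}\lesssim N^{2/r}\|P_Nu\|_{L^r}\lesssim N^{-(1-2/r)}\|\nabla u\|_{L^r}$, and summing the geometric series over dyadic $N\gtrsim R$ gives the tail bound $\lesssim_r R^{-(1-2/r)}\|u\|_{H^{1,r}}$, after which your choice of $R$ and the $(a+b)^2\le \alpha a^2+\tfrac{\alpha}{\alpha-1}b^2$ trick do yield an inequality with prefactor $\tfrac{\alpha}{4\pi}\tfrac{2r}{r-2}$. However, the constants produced this way are Bernstein/multiplier constants, not the Morrey constant $c_r$ nor the explicit factor $(8/\mu)^{1-2/r}$ appearing inside the logarithm in \eqref{bwg} (note also that your choice $R^2/\mu=X^{2r/(r-2)}$ produces a $\mu^{-(1-2/r)/2}$, not $(8/\mu)^{1-2/r}$). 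So even after fixing the Hausdorff--Young error, your sketch proves a weaker variant with unspecified $r$-dependent constants inside the log, whereas the sharp form stated — which is exactly what the cited references establish, by a more careful argument — is claimed. The conceptual skeleton (split, optimize $R$, absorb via $\alpha$) is right, but the high-frequency estimate as written fails and the sharp bookkeeping is not delivered.
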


 \subsection{Morawetz Estimates}
The Morawetz estimate, proved independently by Colliander--Grillakis--Tzirakis and Planchon--Vega, is essential to our analysis by providing an {\em a-priori} estimate to start from and build upon.
\begin{lemma}{\cite{MR2527809, MR2518079}}\label{morawetz} If $u:\R\times \R^2\to \C$ is a global solution of $\eqref{PDE}$, $\eqref{PDE2}$, or $\eqref{mcnls}$ then \[||u||_{L_t^4L_x^8(\R\times \R^2)}\lesssim ||u||_{L_t^\infty L_x^2(\R\times \R^2)}^{\frac{3}{4}}||\grad u||_{L_x^\infty L_x^2(\R\times \R^2)}^{\frac{1}{4}}\lesssim ||u||_{L_t^\infty H_x^1(\R\times \R^2)}.\]

\end{lemma}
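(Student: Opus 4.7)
The plan is a two-step reduction: first establish the interaction Morawetz bound
\[
\bigl\||\nabla|^{1/2}(|u|^2)\bigr\|_{L^2_{t,x}(\R\times\R^2)}^2 \lesssim \|u\|_{L^\infty_t L^2_x}^3\,\|\nabla u\|_{L^\infty_t L^2_x}
\]
of Planchon--Vega and Colliander--Grillakis--Tzirakis, then upgrade it via the two-dimensional Sobolev embedding $\dot H^{1/2}(\R^2)\hookrightarrow L^4(\R^2)$.

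For the Morawetz step, the standard approach is to introduce an interaction virial quantity built from $|u(t,x)|^2|u(t,y)|^2$ against a weight encoding the two-dimensional Newtonian kernel $\log|x-y|$, differentiate in $t$ using $iu_t+\Delta u=F(u)$, and integrate by parts. The linear terms produce, via the distributional identity $\Delta_x\log|x-y|=2\pi\delta_{x=y}$, a coercive expression equivalent to $\||\nabla|^{1/2}(|u|^2)\|_{L^2_x}^2$ (this is the specifically two-dimensional feature). The nonlinear terms contribute a double integral of $|u(t,y)|^2\,\mathrm{Re}(\bar u F)(t,x)$ against a positive kernel. For each of our three nonlinearities,
\[
\bar u\,g(u)=|u|^4,\quad \bar u\,F_1(u)=|u|^2(e^{4\pi|u|^2}-1),\quad \bar u\,F_2(u)=|u|^2(e^{4\pi|u|^2}-4\pi|u|^2-1)
\]
are nonnegative (the last using $e^x\ge 1+x$), so this contribution has the favorable (defocusing) sign and may be discarded. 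Integrating the resulting differential inequality over $\R$ and bounding the virial uniformly in $t$ by $\|u\|_{L^\infty_t L^2_x}^3\|\nabla u\|_{L^\infty_t L^2_x}$ via Cauchy--Schwarz yields the Morawetz bound.

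The upgrade is direct: applying $\dot H^{1/2}(\R^2)\hookrightarrow L^4(\R^2)$ to $|u(t,\cdot)|^2$ gives $\|u(t)\|_{L^8_x}^4=\||u(t)|^2\|_{L^4_x}^2\lesssim \||\nabla|^{1/2}(|u(t)|^2)\|_{L^2_x}^2$; integrating in $t$, combining with the Morawetz bound, and taking fourth roots produces the first inequality of the statement. The second inequality follows immediately from the definition of the $H^1$ norm.

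The principal obstacle is the rigorous justification of the virial manipulations for the exponential nonlinearities $F_1$ and $F_2$: one must approximate by truncation (for instance a smooth cutoff of the virial weight, or a frequency localization of the equation) and pass to the limit while keeping the pointwise sign $\bar u F_i(u)\ge 0$ intact. The $H^1$ a priori control delivered by conservation of $H_1$ or $H_2$ in the regimes where the lemma is invoked supplies the regularity needed to make this limiting procedure rigorous.
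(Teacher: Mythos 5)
The paper offers no proof of this lemma: it is quoted directly from Colliander--Grillakis--Tzirakis and Planchon--Vega, and your outer reduction is indeed the right way to see why it applies to all three equations --- the interaction Morawetz bound $\bigl\||\nabla|^{1/2}(|u|^2)\bigr\|_{L_{t,x}^2}^2\lesssim \|u\|_{L_t^\infty L_x^2}^3\|\nabla u\|_{L_t^\infty L_x^2}$ holds for defocusing nonlinearities of the form $g(|u|^2)u$ with $g\ge 0$, which covers $g(s)=s$, $e^{4\pi s}-1$, and $e^{4\pi s}-4\pi s-1$, and the upgrade via $\dot H^{1/2}(\R^2)\hookrightarrow L^4(\R^2)$ applied to $|u(t)|^2$, integration in time, and a fourth root gives exactly the stated $L_t^4L_x^8$ bound with matching exponents.

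The genuine gap is in your re-derivation of the interaction Morawetz estimate itself. The mechanism you propose --- virial weight $\log|x-y|$ together with $\Delta_x\log|x-y|=2\pi\delta_{x=y}$ --- does not produce the half-derivative quantity: pairing $\Delta\delta$ against $|u(x)|^2|u(y)|^2$ yields (up to constants) $\int|\nabla(|u|^2)|^2\,dx$, i.e.\ a \emph{full} derivative of $|u|^2$, which would be a strictly stronger estimate than the cited one and is not what CGT/PV prove. More importantly, the derivation does not close with this weight: $\log|x-y|$ is not convex, so the quadratic (Hessian) term in the virial identity has no sign and cannot be discarded, and $|\nabla\log|x-y||=|x-y|^{-1}$ is unbounded, so the virial is no longer controlled uniformly in time by $\|u\|_{L_t^\infty L_x^2}^3\|\nabla u\|_{L_t^\infty L_x^2}$ via Cauchy--Schwarz, breaking the final integration-in-time step. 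The actual proofs work with the convex weight $|x-y|$ --- CGT after tensoring two copies of the solution so the analysis takes place on $\R^2_x\times\R^2_y$, Planchon--Vega through bilinear virial identities --- and the $\dot H^{1/2}$ coercivity emerges from that analysis; this is precisely the two-dimensional subtlety that made the 2D interaction Morawetz estimate nontrivial, and your sketch bypasses it with an identity that does not do the advertised work. A smaller point: for $F(u)=g(|u|^2)u$ the sign that must be checked is that of the potential $G(s)=\int_0^s g$ entering through the momentum bracket, not of $\mathrm{Re}(\bar u F(u))$ itself; since $g\ge0$ implies $G\ge0$ your conclusion survives, but the attribution should be corrected. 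If you simply invoke the cited estimate (as the paper does) rather than re-derive it, the remainder of your argument is fine.
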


\section{Moser--Trudinger--Strichartz Inequalities}
The Moser--Trudinger and Br\'ezis--Wainger--Gallou\"et inequalities are invaluable when proving $L_x^q$-estimates involving the exponential nonlinearities appearing in $\eqref{PDE}$ and $\eqref{PDE2}$. Indeed, one can (and should) regard the Moser--Trudinger inequality as an $L_x^1$ estimate on the nonlinearity and the Br\'ezis--Wainger--Gallou\"et inequality as an $L_x^\infty$ estimate, permitting one to interpolate in between. This technique has been successfully employed in \cite{MR2568809} and \cite{MR2929605}. 

In this section, we systematically study which {\em spacetime} bounds are available in this setting. Specifically, we prove a general family of spacetime bounds including, to the author's knowledge, the first that hold generally in the energy-critical case. Several instances of these estimates are implicit in the literature (see \cite{MR3161603},\cite{MR2568809},\cite{MR2929605}). We contend that this formulation will streamline the standard proofs of well-posedness and scattering in the energy-subcritical cases; see Sections 4 and 5. 

\subsection{Global Moser--Trudinger--Strichartz} The lack of an {\em a priori} $L_x^\infty$ bound on solutions to $\eqref{PDE}$ and $\eqref{PDE2}$ present a frustrating but manageable challenge in proving spacetime estimates. We can overcome this challenge by splitting the set of times into two parts. If $t\in \R$ satisfies $||u(t)||_{L_x^\infty}\le K$ (say), then we use the trivial bound\[|u(t,x)(e^{4\pi|u(t,x)|^2}-4\pi |u(t,x)|^2-1)|\lesssim_{K} |u(t,x)|^5.\] As we will see, this contribution is easily estimated in two dimensions. The enemy, then, in establishing good spacetime bounds are those times when the $||u(t)||_{L_x^\infty}$ norm is large. We begin with a trivial $L_x^1$ estimate, which, when combined with a hard-fought $L_t^pL_x^\infty$ estimate, will yield the full Moser--Trudinger--Strichartz estimate. 
\begin{lemma}[An $L_x^1$ estimate] \label{L1EST} If $u:\R\times \R^2\to \C$ satisfies $||u||_{L_t^\infty\dot{H}_x^1}\le 1$, then \[||e^{4\pi |u(t)|^2}-1||_{L_x^1(\R^2)}\lesssim ||u(t)||_{L_x^2(\R^2)}^2(1+||u(t)||_{L_{x}^{\infty}}^2).\] 
\begin{proof} As $u$ satisfies the hypotheses of Proposition $\ref{MT3}$, we know that \[\int_{\R^2}\frac{e^{4\pi |u(t,x)|^2}-1}{(1+|u(t,x)|)^2}\ dx\lesssim ||u(t)||_{L_x^2}^2.\] Thus, we see that \[\int_{\R^2} e^{4\pi |u(t,x)|^2}-1\ dx\lesssim ||u(t)||_{L_x^2(\R^2)}^2(1+||u(t)||_{L_x^\infty})^2\lesssim ||u(t)||_{L_x^2(\R^2)}^2(1+||u(t)||_{L_x^\infty}^2),\] as desired.\end{proof}
\end{lemma}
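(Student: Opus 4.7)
The plan is to apply the sharp Moser--Trudinger inequality of Proposition \ref{MT3} directly, using the hypothesis $\|u\|_{L_t^\infty \dot H_x^1} \le 1$ to justify its application at each fixed time $t\in \R$. The advantage of Proposition \ref{MT3} over the analogous energy-critical inequality of Proposition \ref{MT2} is that it retains the $\|u(t)\|_{L_x^2}^2$ factor on the right-hand side; the only ``cost'' is the weight $(1+|u|)^{-2}$ sitting in the integrand on the left, which is precisely what we need to invert if we want to recover a bound on $\|e^{4\pi|u|^2}-1\|_{L_x^1}$.

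With this in mind, the execution is essentially a single line. First, for fixed $t$, Proposition \ref{MT3} gives
\[
\int_{\R^2}\frac{e^{4\pi|u(t,x)|^2}-1}{(1+|u(t,x)|)^2}\,dx \lesssim \|u(t)\|_{L_x^2}^2.
\]
Second, I would simply multiply and divide by $(1+|u(t,x)|)^2$ inside the target integral and bound this weight pointwise by $(1+\|u(t)\|_{L_x^\infty})^2$ to pull it outside:
\[
\|e^{4\pi|u(t)|^2}-1\|_{L_x^1} \le (1+\|u(t)\|_{L_x^\infty})^2 \int_{\R^2}\frac{e^{4\pi|u(t,x)|^2}-1}{(1+|u(t,x)|)^2}\,dx.
\]
Combining the two displays and using $(1+a)^2 \lesssim 1+a^2$ yields the desired inequality.

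There is no real obstacle here; the only thing to double-check is that the hypothesis $\|u\|_{L_t^\infty\dot H_x^1}\le 1$ matches the hypothesis $\|\nabla u\|_{L_x^2}\le 1$ required by Proposition \ref{MT3}, which it does at each fixed time $t$. The lemma's utility, and the reason to state it in exactly this form, will become apparent in the next step of the Moser--Trudinger--Strichartz program: the factor $(1+\|u(t)\|_{L_x^\infty}^2)$ is what will need to be estimated in spacetime via a Br\'ezis--Wainger--Gallou\"et bound, while the $\|u(t)\|_{L_x^2}^2$ factor is under control by mass conservation.
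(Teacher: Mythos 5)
Your argument is correct and is exactly the paper's proof: apply Proposition \ref{MT3} at each fixed time, bound the weight $(1+|u(t,x)|)^2$ pointwise by $(1+\|u(t)\|_{L_x^\infty})^2$, and conclude with $(1+a)^2\lesssim 1+a^2$. Nothing further is needed.
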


Although the proof of Lemma \ref{L1EST} is elementary, it further illustrates our point that the nonlinearity is well controlled at times when $||u(t)||_{L_x^\infty}$ is small. The next lemma will demonstrate how to handle those times when $||u(t)||_{L_x^\infty}$ is large. 

\begin{lemma}[An $L_t^pL_x^\infty$ estimate] \label{LEST}Let $\alpha>0$ and suppose $u:\R\times \R^2\to \C$ is such that \[||u||_{L_t^\infty \dot{H}_x^1}\le 1\qtq{and}||u(t)||_{L_x^2(\R^2)}^2=M\] for every $t\in \R$. Then for all $1\le p<\infty$ with \[||u||_{L_t^\infty\dot{H}_x^1}<\frac{1}{p\alpha}\] there exists some $K>0$ so that \begin{align}||e^{4\pi \alpha |u|^2}||_{L_t^pL_x^\infty(I\times \R^2)}\le C(M)||u||_{L_t^4H_{x}^{1,4}}^\frac{4}{p},\end{align} where $I=\{t\in \R: ||u(t)||_{L_x^\infty(\R^2)}>K\}$. 

\begin{proof} We begin by defining a few relevant parameters,  which will help us in interpolating later in the proof and avoiding an unnecessary discussion of the dependence of some constants on others.  

We first choose $0<\mu<1$ sufficiently small so that \begin{align}\label{critineq1}\frac{1}{p\alpha}>||u||_{L_t^\infty H_{\mu}^1}^2.\end{align} 

We then choose an increasing continuous function $f:(2,\infty)\to (1,\infty)$ so that \begin{align}\label{flimit1}&\lim_{r\to 2^{+}}f(r)=1,\qquad \lim_{r\to \infty}f(r)=\infty,\qquad \text{and} \\  \label{fbound1} &\sup_{r\in (2,\infty)}\frac{r-2}{2r} \cdot \frac{4}{p\alpha}\cdot \frac{1}{f(r)}\cdot \frac{1}{||u||_{L_t^\infty H_\mu^1}^2}<1.\end{align}

Now, for $2<r<\infty$, define \begin{align}\label{thetadef}\lambda(r)&:=\tfrac{f(r)}{4\pi}\cdot \tfrac{2r}{r-2}\qquad \text{and}\qquad \\ \theta(r)&:=\tfrac{1}{p\alpha}\cdot \tfrac{4}{4\pi \lambda(r)||u||_{L_t^\infty H_\mu^1}^2}=\tfrac{r-2}{2r} \cdot \tfrac{4}{p\alpha}\cdot \tfrac{1}{f(r)}\cdot \tfrac{1}{||u||_{L_t^\infty H_\mu^1}^2}.\end{align} By ($\ref{fbound1}$), we know $\lambda(r)>\frac{1}{\pi}$ and $0<\theta(r)<1$ uniformly in $(2,\infty)$. 

Consider the continuous function $\phi:(2,\infty)\to \R$ given by \begin{align*}\phi(r)&=\tfrac{1}{r}-\left[\tfrac{\theta(r)}{4}+\tfrac{1-\theta(r)}{2}\right]=\tfrac{r-2}{2r}\big[\tfrac{1}{p\alpha}\cdot \tfrac{1}{f(r)}\cdot \tfrac{1}{||u||_{L_t^\infty H_{\mu}^1}^2}-1\big].\end{align*}
 By $\eqref{critineq1}$ and $\eqref{flimit1}$,  \begin{align*}&\lim_{r\to 2^{+}}\frac{1}{p\alpha}\cdot \frac{1}{f(r)}\cdot \frac{1}{||u||_{L_t^\infty H_{\mu}^1}^2}-1=\frac{1}{p \alpha ||u||_{L_t^\infty H_{\mu}^1}^2}-1>0\ \ \text{ and }\\ &\lim_{r\to\infty}\frac{1}{p\alpha}\cdot \frac{1}{f(r)}\cdot \frac{1}{||u||_{L_t^\infty H_{\mu}^1}^2}-1=-1<0.\end{align*}  Since $\frac{r-2}{2r}>0$ for $r>2$, it follows by the intermediate value theorem that there exists some $r_0\in (2,\infty)$ so that $\phi(r_0)=0$. For this value of $r_0$, we have that 
 \begin{align*}
 \frac{\theta(r_0)}{4}=\frac{\theta(r_0)}{4}+\frac{1-\theta(r_0)}{\infty}\qquad \text{ and }\qquad\frac{1}{r_0}=\frac{\theta(r_0)}{4}+\frac{1-\theta(r_0)}{2}.\end{align*}By interpolation we deduce that \begin{align}\label{interp}||u||_{L_t^{\frac{4}{\theta(r_0)}}H_{x}^{1,r_0}}\le ||u||_{L_t^4H_{x}^{1,4}}^{\theta(r_0)}||u||_{L_t^\infty H_{x}^{1}}^{1-\theta(r_0)}.\end{align}
 
 Now, let \begin{align*}K&=2(1+M)C_{\lambda(r_0)}\qtq{and}I=\{t\in \R: ||u(t)||_{L_x^\infty(\R^2)}>K\},\end{align*} where $C_{\lambda(r_0)}$ is the constant appearing in $\eqref{bwg}$. Notice that if $t\in I$ we have 
\[C_{\lambda(r_0)}||u||_{L_t^\infty H_{\mu}^1}^2 \le C_{\lambda(r_0)}(1+M)< \tfrac{1}{2}||u(t)||_{L_x^\infty}\le \tfrac{c_{r_0}}{2}||u(t)||_{H_x^{1,r_0}},\] from which it follows that \begin{align*}C_{\lambda(r_0)}<  \frac{c_{r_0}||u(t)||_{H_x^{1,r_0}}}{2||u||_{L_t^\infty H_{\mu}^1}^2}.\end{align*} Thus \begin{align} \label{maxbound}C_{\lambda}(r_0)+\frac{(8/\mu)^{1-\frac{2}{r_0}}c_{r_0}||u(t)||_{H_x^{1,r_0}}}{||u||_{L_t^\infty H_{\mu}^1}}\lesssim_{M,\mu} ||u(t)||_{H_x^{1,r_0}}.\end{align} 

With our parameters now well understood, the proof is rather straightforward. By Proposition $\ref{SBWG}$, $\eqref{thetadef}$, and $\eqref{maxbound}$,
\begin{align*}
\exp[4\pi \alpha ||u(t)||_{L_x^\infty(\R^2)}^2]
&\le \exp\left[4\pi \lambda(r_0) \alpha ||u(t)||_{H_{\mu}^1}^2\log \left[C_{\lambda}(r_0)+\frac{(8/\mu)^{1-\frac{2}{r_0}}c_{r_0}||u(t)||_{H_x^{1,r_0}}}{||u(t)||_{H_{\mu}^1}}\right]\right]\\ 
&\le \exp\left[4\pi \lambda(r_0) \alpha ||u||_{L_t^\infty H_{\mu}^1}^2\log \left[C_{\lambda}(r_0)+\frac{(8/\mu)^{1-\frac{2}{r_0}}c_{r_0}||u(t)||_{H_x^{1,r_0}}}{||u||_{L_t^\infty H_{\mu}^1}}\right]\right]\\
&\lesssim_{M,\mu} ||u(t)||_{H_x^{1,r_0}}^{4\pi \alpha \lambda(r_0)||u||_{L_t^\infty H_{\mu}^1}^2}\\ 
&=||u(t)||_{H_{x}^{1,r_0}}^{\frac{4}{p\theta(r_0)}}.\end{align*} Note that the second inequality follows from the fact that if $a>1$ and $b>0$ then the function \[x\mapsto x^2\log(a+\tfrac{b}{x})\] is an increasing function when $x>0$. Integrating in time we obtain \begin{align*}||\exp(4\pi \alpha |u|^2)||_{L_t^pL_x^\infty(\R\times \R^2)}\lesssim_{M,\mu} ||u||_{L_t^\frac{4}{\theta(r_0)}H_{x}^{1,r_0}}^{\frac{4}{p\theta(r_0)}}\lesssim_{M,\mu} ||u||_{L_t^4H_{x}^{1,4}}^{\frac{4}{p}}\end{align*}as desired.
\end{proof}
\end{lemma}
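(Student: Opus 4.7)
The plan is to convert the exponential $e^{4\pi\alpha|u(t)|^2}$ into a polynomial in a Sobolev norm by invoking the sharp Br\'ezis--Wainger--Gallou\"et inequality (Proposition \ref{SBWG}), and then to recover the claimed $\|u\|_{L_t^4 H_x^{1,4}}^{4/p}$ bound by Sobolev interpolation between the hypothesis-controlled $L_t^\infty H_x^1$ norm and the target $L_t^4 H_x^{1,4}$ norm. The threshold $K$ enters only to guarantee that, for $t \in I$, the additive constant and the $\|u(t)\|_{H_\mu^1}$-denominator appearing inside the logarithm in \eqref{bwg} are absorbed by the leading $c_r\|u(t)\|_{H_x^{1,r}}$ term, up to $M$-dependent constants; without such a threshold the exponentiation step would not yield a clean power.

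Concretely I would first pick $\mu \in (0,1]$ small enough that the strict inequality $\|u\|_{L_t^\infty \dot H_x^1}<\tfrac{1}{p\alpha}$ upgrades to $\|u\|_{L_t^\infty H_\mu^1}^2 < \tfrac{1}{p\alpha}$. For a parameter $r \in (2,\infty)$ and an auxiliary $\alpha' = f(r) > 1$ tending to $1$ as $r \to 2^+$, Proposition \ref{SBWG} then exponentiates on $I$ to
\[
e^{4\pi\alpha|u(t)|^2} \lesssim_{M,\mu} \|u(t)\|_{H_x^{1,r}}^{\beta(r)}, \qquad \beta(r):=4\pi\alpha f(r)\tfrac{2r}{r-2}\|u\|_{L_t^\infty H_\mu^1}^2.
\]
To close, I need an $r_0 \in (2,\infty)$ and a $\theta_0 \in (0,1)$ which simultaneously satisfy the exponent match $\beta(r_0)=\tfrac{4}{p\theta_0}$ and the Sobolev interpolation identity $\tfrac{1}{r_0}=\tfrac{\theta_0}{4}+\tfrac{1-\theta_0}{2}$. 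Combining these two relations yields a single continuous scalar function $\phi(r)$ on $(2,\infty)$, and the intermediate value theorem delivers such an $r_0$: the strict gap $\tfrac{1}{p\alpha}-\|u\|_{L_t^\infty H_\mu^1}^2>0$ forces $\phi(r) > 0$ near $r=2$, while $\phi(r) < 0$ as $r \to \infty$. Feeding this $r_0$ into Sobolev interpolation,
\[
\|u\|_{L_t^{4/\theta_0}H_x^{1,r_0}} \le \|u\|_{L_t^4 H_x^{1,4}}^{\theta_0}\|u\|_{L_t^\infty H_x^1}^{1-\theta_0},
\]
then raising to the $\tfrac{4}{p\theta_0}=\beta(r_0)$-power and using the hypothesis-derived bound $\|u\|_{L_t^\infty H_x^1} \lesssim (1+M)^{1/2}$ to absorb the residual factor yields the claimed inequality with constant depending only on $M$.

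The technical heart is the simultaneous calibration of the three parameters $\mu$, $f$, and $r_0$: they must be chosen so that (a) the $L_x^\infty$ threshold $K$ absorbs the BWG logarithm's additive constants on $I$, (b) the interpolation identity on $r_0$ closes, and (c) the interpolation exponent $\theta_0$ lies strictly in $(0,1)$. All three hinge on the strictness of the hypothesis $\|u\|_{L_t^\infty \dot H_x^1}<\tfrac{1}{p\alpha}$, which alone provides both the slack to pick $\mu$ with room to spare and the freedom to take an auxiliary $f>1$ arbitrarily close to $1$ at $r=2^+$. Were the hypothesis an equality, $\theta_0$ would degenerate to an endpoint and the interpolation would fail. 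Once the parameters are located, the remaining steps --- the BWG exponentiation, the interpolation estimate, and the final integration in time --- are essentially bookkeeping.
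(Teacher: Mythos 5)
Your proposal follows essentially the same route as the paper's proof: dampen the mass term via $H_\mu^1$ using the strict slack in $\|u\|_{L_t^\infty\dot H_x^1}<\tfrac{1}{p\alpha}$, exponentiate via the sharp Br\'ezis--Wainger--Gallou\"et inequality with an auxiliary parameter $f(r)\to 1$ as $r\to 2^+$, locate $r_0$ by the intermediate value theorem so that the exponent match and the interpolation identity $\tfrac{1}{r_0}=\tfrac{\theta_0}{4}+\tfrac{1-\theta_0}{2}$ hold simultaneously, use the threshold $K$ to absorb the logarithm's additive constant, and conclude by interpolation and integration in time. The only blemish is a bookkeeping slip in your formula for $\beta(r)$ (the $\tfrac{1}{4\pi}$ in \eqref{bwg} cancels the $4\pi$ in the exponent, so $\beta(r)=\alpha f(r)\tfrac{2r}{r-2}\|u\|_{L_t^\infty H_\mu^1}^2$), which is consistent with your later, correct claim that the gap $\tfrac{1}{p\alpha}-\|u\|_{L_t^\infty H_\mu^1}^2>0$ drives the sign change of $\phi$.
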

We divide the proof of Theorem $\ref{GMTS}$ into two propositions. 

\begin{proposition}\label{GMTSP1} Let $u:\R\times \R^2\to \C$ satisfy \begin{align}\label{cond12}||u(t)||_{\dot{H}_x^1(\R^2)}\le 1\qtq{and}||u(t)||_{L_x^2(\R^2)}^2=M,\end{align} for every $t\in \R$. If $s\ge 4$ and $1\le p,q\le \infty$ are such that \begin{align}\label{cond22}\frac{q'}{p}> 1,\end{align}then \begin{align}\label{GMTSC2}|||u|^se^{4\pi |u|^2}||_{L_t^pL_x^q(\R\times \R^2)}\lesssim_{M} ||u||_{L_t^4H_{x}^{1,4}(\R\times \R^2)}^{\frac{4}{p}}.\end{align}
If $||u||_{L_t^\infty \dot{H}_x^1}<1$, then we may replace condition $\eqref{cond22}$ with the condition\begin{align}\frac{q'}{p}\ge 1.
\end{align}

\begin{proof}Let $K>1$ be a large parameter whose value will be chosen later, and define \[I_{K}:=\{t\in \R: ||u(t)||_{L_x^\infty(\R^2)}>K\}.\] 

If $t\not\in I_{K}$, then since $s\ge 4$ we have that \begin{align}\label{trivbd}|u(t,x)|^se^{4\pi |u(t,x)|^2}\lesssim_{K}|u(t,x)|^4.\end{align} By H\"older and Sobolev Embedding it follows that \[ ||u^4(t)||_{L_x^q}\le ||u(t)||_{L_x^4}^{\frac{4}{q}}||u(t)||_{L_x^\infty}^\frac{4}{q'}\lesssim ||u(t)||_{L_x^4}^{\frac{4}{q}}||u(t)||_{H_x^{1,4}}^{\frac{4}{q'}}.\] As $||u(t)||_{L_x^4}\le ||u(t)||_{H_x^{1,4}}$ and $q'\ge p$ we see that
\begin{align*}||u(t)||_{L_x^4}^{\frac{4}{q}}||u(t)||_{H_x^{1,4}}^{\frac{4}{q'}}\le ||u(t)||_{L_x^4}^{\frac{4}{p'}}||u(t)||_{H_{x}^{1,4}}^{\frac{4}{q}-\frac{4}{p'}}||u(t)||_{H_{x}^{1,4}}^{\frac{4}{q'}}= ||u(t)||_{L_x^4}^\frac{4}{p'}||u(t)||_{H_{x}^{1,4}}^{\frac{4}{p}}.\end{align*}Integrating in time, we obtain that \begin{align}\label{lowlowest}||u^4(t)||_{L_t^pL_x^q((\R\setminus I_{K})\times \R^2)}\lesssim_{K} ||u||_{L_t^\infty L_x^4}^{\frac{4}{p'}}||u||_{L_t^4H_{x}^{1,4}}^{\frac{4}{p}}\lesssim_{M,K}||u||_{L_t^4H_{x}^{1,4}}^\frac{4}{p}.\end{align} In light of \eqref{trivbd}, we deduce that 
\begin{align}\label{lowest}|||u|^se^{4\pi |u|^2}||_{L_t^pL_x^q((\R\setminus I_{K})\times \R^2)}^p\lesssim_{M,K}||u||_{L_t^4H_{x}^{1,4}}^{\frac{4}{p}}.\end{align} provided that $q'\ge p$. 

We now turn to estimating $u$ on $I_{K}$. Let $\alpha\in (0,1)$ be close to $1$. In light of Proposition \ref{MT4} we have that for each $t\in I_{K}$ that \begin{align*}
|||u(t)|^se^{4\pi |u(t)|^2}||_{L_x^q}^q&=\int_{\R^2}|u(t,x)|^{sq}e^{4\pi q|u(t,x)|^2}\ dx\\ &=e^{4\pi (q-\alpha)||u(t)||_{L_x^\infty(\R^2)}^2}\int_{\R^2}|u(t,x)|^{sq}e^{4\pi \alpha|u(t,x)|^2}\ dx\\ &\lesssim e^{4\pi (q-\alpha)||u(t)||_{L_x^\infty(\R^2)}^2} ||u(t)||_{L_x^{sq}(\R^2)}^{sq}.
\end{align*}Taking $q^{\text{th}}$ roots, we arrive at \[|| |u|^se^{4\pi |u|^2}||_{L_t^pL_x^q(I_{K}\times \R^2)}\lesssim e^{4\pi \frac{q-\alpha}{q}||u(t)||_{L_x^\infty(\R^2)}^2}||u||_{L_x^{sq}(\R^2)}^{s}.\] Integrating in time and interpolating we deduce that 
\begin{align*}|||u|^se^{4\pi |u|^2}||_{L_t^pL_x^q(I_{K}\times \R^2)}&\lesssim ||u||_{L_t^\infty L_x^{sq}(I_{K}\times \R^2)}^s||e^{4\pi \frac{q-\alpha}{q}|u|^2}||_{L_t^pL_x^\infty(I_{K}\times \R^2)}\\&\lesssim_{M} ||e^{4\pi \frac{q-\alpha}{q}|u|^2}||_{L_t^pL_x^\infty(I_{K}\times \R^2)}.\end{align*} If $\frac{q'}{p}>||u||_{L_t^\infty \dot{H}_x^1}$, then \[\lim_{\alpha\to 1^{-}}\frac{1}{p\frac{q-\alpha}{q}}=\frac{q'}{p}>||u||_{L_t^\infty\dot{H}_x^1(\R\times \R^2)},\] and so we may apply Lemma \ref{LEST} to obtain that for $K$ sufficiently large, \begin{align}\label{highest}|||u|^se^{4\pi |u|^2}||_{L_t^pL_x^q(I_{K}\times \R^2)}\lesssim_{M,K}||u||_{L_t^4H_{x}^{1,4}(\R\times \R^2)}^\frac{4}{p}.\end{align} 
Combining \eqref{lowest} and \eqref{highest} we deduce that \[|||u|^se^{4\pi |u|^2}||_{L_t^pL_x^q}\lesssim_{M}||u||_{L_t^4H_{x}^{1,4}(\R\times \R^2)}^\frac{4}{p}\] when $\frac{q'}{p}\ge 1$ and $\frac{q'}{p}>||u(t)||_{L_t^\infty \dot{H}_x^1}$, as desired. 
\end{proof} 
\end{proposition}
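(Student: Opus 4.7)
The plan is to decompose the time axis according to the size of $\|u(t)\|_{L^\infty_x}$. Fix a large threshold $K>0$ to be chosen later and let $I_K:=\{t\in\R:\|u(t)\|_{L^\infty_x}>K\}$. The heuristic is that on $\R\setminus I_K$ the exponential is essentially inert, while on $I_K$ it is the enemy and must be tamed using a combination of Proposition \ref{MT4} (Moser--Trudinger) and Lemma \ref{LEST} (which itself encodes Br\'ezis--Wainger--Gallou\"et).

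On $\R\setminus I_K$, the pointwise inequality $|u|^s e^{4\pi|u|^2}\lesssim_K |u|^s$ holds, and since $s\ge 4$, I would absorb the extra powers into the constant by writing $|u|^s=|u|^4|u|^{s-4}$ with $|u|^{s-4}\lesssim_K 1$. From there, interpolating $L^q_x$ between $L^4_x$ and $L^\infty_x$, using Sobolev embedding $H^{1,4}_x\hookrightarrow L^\infty_x$, and then integrating in time (with $\|u\|_{L^\infty_t L^4_x}\lesssim_M 1$ by mass/energy conservation and Sobolev embedding) should produce a bound by $C(M,K)\|u\|_{L^4_t H^{1,4}_x}^{4/p}$. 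The only constraint used here is $q'\ge p$, which holds throughout the hypothesized range.

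The heart of the matter is the piece on $I_K$. My plan is to split the exponent $4\pi q$ as $4\pi(q-\alpha)+4\pi\alpha$ for a parameter $\alpha<1$ to be chosen close to $1$. Then I bound the first factor pointwise by $e^{4\pi(q-\alpha)\|u(t)\|_{L^\infty_x}^2}$ and pull it outside the $L^q_x$ integral; for the remaining factor $e^{4\pi\alpha|u|^2}$, the condition $\alpha<1$ together with $\|u(t)\|_{\dot{H}^1_x}\le 1$ brings us into the scope of Proposition \ref{MT4}, producing $\int|u|^{sq}e^{4\pi\alpha|u|^2}\lesssim \|u(t)\|_{L^{sq}_x}^{sq}$. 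Uniform control of $\|u(t)\|_{L^{sq}_x}$ in $M$ (Sobolev embedding again) reduces the estimate to controlling
\[
\bigl\|e^{4\pi\tfrac{q-\alpha}{q}|u|^2}\bigr\|_{L^p_t L^\infty_x(I_K\times\R^2)},
\]
and here I invoke Lemma \ref{LEST} with effective parameter $(q-\alpha)/q$. Its hypothesis requires $\|u\|_{L^\infty_t\dot{H}^1_x}< q/(p(q-\alpha))$; since $\alpha$ may be pushed to $1$, this reduces in the limit to $q'/p>\|u\|_{L^\infty_t\dot{H}^1_x}$, which is guaranteed under either hypothesis of the proposition (strict $q'/p>1$ with $\|u\|_{\dot{H}^1}\le 1$, or $q'/p\ge 1$ with $\|u\|_{\dot{H}^1}<1$).

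The main obstacle I anticipate is the simultaneous calibration of all the auxiliary parameters: $\alpha$ close to $1$ (so Proposition \ref{MT4} applies), $K$ large (so that $I_K$ is where Lemma \ref{LEST} is nontrivial), and the interior parameters $r,\mu,\theta$ buried inside Lemma \ref{LEST} itself (coming from Br\'ezis--Wainger--Gallou\"et). I expect to have to argue by continuity/intermediate-value considerations to produce compatible choices and, crucially, to get the exponent $4/p$ (and not merely a power close to it) on the right-hand side. The strict versus non-strict endpoint distinction will emerge naturally from whether one can afford to let $\alpha\to 1^-$ without using up the slack provided by $\|u\|_{L^\infty_t\dot{H}^1_x}<1$.
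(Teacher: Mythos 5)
Your proposal matches the paper's own proof essentially step for step: the same decomposition of the time axis via $I_K=\{t:\|u(t)\|_{L_x^\infty}>K\}$, the same treatment of the complement using the pointwise bound $|u|^se^{4\pi|u|^2}\lesssim_K|u|^4$, interpolation between $L_x^4$ and $L_x^\infty$, Sobolev embedding, and the condition $q'\ge p$; and the same treatment on $I_K$ by splitting the exponent as $4\pi(q-\alpha)+4\pi\alpha$, applying Proposition \ref{MT4} to the $e^{4\pi\alpha|u|^2}$ factor, and invoking Lemma \ref{LEST} with effective parameter $\tfrac{q-\alpha}{q}$, with $\alpha\to 1^-$ giving exactly the stated strict/non-strict dichotomy. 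This is the paper's argument, correctly reproduced.
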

A few remarks on the previous theorem are in order.

\begin{remark}
The condition that \[\frac{q'}{p}> 1,\] is equivalent to condition that \[\tfrac{1}{p}+\tfrac{1}{q}>1.\]
\end{remark}

\begin{remark}\label{difftech}Let us pause to illustrate a slightly different technique in estimating the purely exponential nonlinearity \[||e^{4\pi |u|^2}-1||_{L_t^pL_x^q(I_{K}\times \R^2)}.\] Rather than employ Proposition $\ref{MT4}$, we instead estimate using Lemma $\ref{L1EST}$.   Indeed, by interpolation and Lemma $\ref{L1EST}$ we see that for any $\eta>0$,\begin{align*}
||e^{4\pi |u|^2}-1||_{L_x^q}&\le ||e^{4\pi|u|^2}-1||_{L_x^1}^{\frac{1}{q}}||e^{4\pi |u|^2}-1||_{L_x^\infty}^{\frac{1}{q'}}\\ &\le C(M)(1+||u||_{L_x^\infty}^2)^{\frac{1}{q}}||e^{4\pi |u|^2}-1||_{L_x^\infty}^{\frac{1}{q'}}\\ &\le C(M,\eta)||e^{4\pi \frac{(1+\eta)}{q'}|u|^2}||_{L_x^\infty}.
\end{align*} If $\frac{q'}{p}>||u||_{L_t^\infty \dot{H}_{x}^{1}}$, we may choose $\eta$ sufficiently small as to guarantee that \[\frac{1}{\frac{(1+\eta)}{q'}\cdot p}>||u||_{L_t^\infty\dot{H}_x^1}.\] Applying Lemma $\ref{LEST}$ with $\alpha=\frac{1+\eta}{q'}$, there exists some $K>0$ so that 
\begin{align}\label{MTMINUS}||e^{4\pi |u|^2}-1||_{L_t^pL_x^q(I_{K}\times \R^2)}\le C(M,\eta)||u||_{L_t^4H_{x}^{1,4}}^{\frac{4}{p}}.\end{align}
\end{remark}

We observe that Theorem $\ref{GMTS}$ fails at the end-point.

\begin{proposition}If $1\le p,q\le \infty$ are such that \[|||u|^se^{4\pi |u|^2}||_{L_t^pL_x^q(\R\times \R^2)}\lesssim ||u||_{L_t^4H_{x}^{1,4}}^{\frac{4}{p}},\] for every spacetime function $u:\R\times \R^2\to \C$ satisfying\begin{align}||u(t)||_{\dot{H}_x^1(\R^2)}\le 1\qtq{and}||u(t)||_{L_x^2(\R^2)}^2=M,\end{align} for every $t\in \R$, then \[\frac{q'}{p}>1.\]
\begin{proof}As in \cite{MR2929605}, for $N\ge 3$ we consider the sequence of functions \[v_N(t,x)=\sqrt{\frac{2\pi}{\log N}}\frac{1}{(2\pi)^2}\int_{1<|\xi|<e^{-1}N}|\xi|^{-2}e^{-it|\xi|^2+i\xi\cdot x}\ d\xi.\] Note that $v_N(t,x)=e^{it\Delta}v_N(0,x)$, where \[v_N(0,x)=\sqrt{\frac{2\pi}{\log(N)}}\frac{1}{(2\pi)^2}\int_{1<|\xi|<e^{-1}N}|\xi|^{-2}e^{i\xi\cdot x}\ d\xi.\]  By conservation of mass and energy it follows that \begin{align}||v_N||_{L_t^\infty L_x^2}^2&=||v_N(0)||_{L_x^2}^2<\frac{1}{2\log N},\qquad \text{and } \label{logbd1}\\ ||v_N||_{L_{t}^{\infty}\dot{H}_{x}^{1}}^2&= ||\grad v_N(0)||_{L_x^2}^2=\frac{\log(N)-1}{\log(N)}\label{logbd2}.\end{align} On the regions $T_N:=\{(t,x): t\sim \ep N^{-2}\text{ and }|x|\sim \ep N^{-1}\}$, we may estimate \[\text{Re}(v_N(t,x))\ge \sqrt{\tfrac{\log(N)}{2\pi}}-\tfrac{1+\ep^2}{\sqrt{2\pi \log N}}\text{ and }|v_N|^se^{4\pi |v_N|^2}\gtrsim N^2(\log(N))^\frac{s}{2},\] when $\ep>0$ is sufficiently small and $N$ is sufficiently large. But then we see that  
\[|||v_N|^se^{4\pi |v_N|^2}||_{L_t^pL_x^q}\ge |||v_N|^se^{4\pi |v_N|^2}||_{L_t^pL_x^q(T_N)}\gtrsim N^{2(1-\frac{1}{p}-\frac{1}{q})}(\log N)^\frac{s}{2}.\] By the Strichartz inequality, $\eqref{logbd1}$, and $\eqref{logbd2}$ we obtain that \[(\log N)^\frac{s}{2}N^{2(1-\frac{1}{p}-\frac{1}{q})}\lesssim_{M} ||v_N||_{L_t^4H_{x}^{1,4}}^{\frac{4}{p}}\lesssim_{M} ||v_N(0)||_{H_x^1}^{\frac{4}{p}}\lesssim 1,\] or, equivalently, that $\frac{1}{p}+\frac{1}{q}> 1.$
\end{proof}\end{proposition}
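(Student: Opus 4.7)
The natural strategy is a Knapp-type concentration argument: construct a sequence of free Schr\"odinger evolutions $v_N$ which nearly extremize the Moser--Trudinger inequality, and then compare the two sides of the claimed estimate. Following \cite{MR2929605}, I would take $v_N(t,x)=e^{it\Delta}v_N(0,x)$, where
\begin{align*}
v_N(0,x)=\sqrt{\tfrac{2\pi}{\log N}}\,\tfrac{1}{(2\pi)^2}\int_{1<|\xi|<e^{-1}N}|\xi|^{-2}e^{i\xi\cdot x}\,d\xi.
\end{align*}
A direct Plancherel computation gives $\|v_N(0)\|_{\dot H^1_x}^2=(\log N-1)/\log N$ and $\|v_N(0)\|_{L_x^2}^2<1/(2\log N)$. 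Since mass and kinetic energy are conserved by the free flow, the hypotheses of the proposition are satisfied (with $M\lesssim 1/\log N$, hence bounded uniformly in $N$).

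The geometric heart of the matter is to exhibit a spacetime region on which $|v_N|^se^{4\pi|v_N|^2}$ is enormous. I would focus on the tube
\begin{align*}
T_N=\{(t,x):t\sim\varepsilon N^{-2},\ |x|\sim\varepsilon N^{-1}\},
\end{align*}
which has measure $\sim N^{-4}$. On $T_N$ the phase $-t|\xi|^2+\xi\cdot x$ is uniformly $O(\varepsilon^2)$ over the bulk of the frequency annulus, so a Taylor expansion of the integrand, followed by extracting the real part, yields
\begin{align*}
\text{Re}(v_N(t,x))\ge \sqrt{\tfrac{\log N}{2\pi}}-\tfrac{1+\varepsilon^2}{\sqrt{2\pi\log N}}.
\end{align*}
For $\varepsilon$ small and $N$ large this gives $|v_N(t,x)|^2\gtrsim(\log N)/(2\pi)$, and hence
$|v_N(t,x)|^se^{4\pi|v_N(t,x)|^2}\gtrsim N^2(\log N)^{s/2}$ pointwise on $T_N$.

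Integrating the pointwise lower bound over $T_N$ produces
\begin{align*}
\bigl\||v_N|^se^{4\pi|v_N|^2}\bigr\|_{L_t^pL_x^q(\R\times\R^2)}\gtrsim N^{2(1-1/p-1/q)}(\log N)^{s/2}.
\end{align*}
On the other side, the Strichartz inequality applied to the free evolution, together with $\|v_N(0)\|_{H_x^1}\lesssim 1$, gives $\|v_N\|_{L_t^4H_x^{1,4}}\lesssim 1$. Plugging both estimates into the hypothesized inequality and letting $N\to\infty$ forces $N^{2(1-1/p-1/q)}(\log N)^{s/2}=O(1)$, which is only possible if $1-\tfrac{1}{p}-\tfrac{1}{q}<0$, that is, $q'/p>1$.

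The only non-routine step is the pointwise lower bound for $\text{Re}(v_N(t,x))$ on $T_N$: the normalization constant $\sqrt{2\pi/\log N}$ times the area $\sim\log N$ of the logarithmically scaled frequency annulus gives a main term of size $\sqrt{\log N}$, while a Taylor expansion of $e^{-it|\xi|^2+i\xi\cdot x}$ on the small tube $T_N$ controls the correction. Once that pointwise bound is in hand, the rest is bookkeeping, with the polynomial factor $N^{2(1-1/p-1/q)}$ doing all the work in forcing the constraint.
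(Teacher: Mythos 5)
Your proposal is correct and follows essentially the same argument as the paper: the same family $v_N$ of free evolutions of logarithmically normalized frequency-annulus data, the same concentration tube $T_N$ with the same pointwise lower bound, and the same Strichartz comparison, with the logarithmic factor ruling out the endpoint $\tfrac1p+\tfrac1q=1$ and forcing the strict inequality $q'/p>1$.
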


Throughout the perturbative theory, we will need the following estimate. The presence of the Morawetz norm $L_t^4L_x^8$ in the statement means that we cannot solely rely on the Moser--Trudinger--Strichartz inequality, since this will only produce an estimate involving the $L_t^4H_{x}^{1,4}$ norm. The difference will be a superficial one as we merely repeat the proof of Moser--Trudinger--Strichartz with a timely H\"older's inequality. 

\begin{corollary}[\cite{MR2929605}] \label{subcritmora}For each $H\in (0,1)$ there exists $\delta=\delta(H)\in (0,1/3)$ so that for any strong solution $u$ of $\eqref{PDE2}$ on $I\times \R^2$ with $H_2(u)\le H$ we have \[||F_2(u)||_{N(I)}\lesssim C(H,M)||u||_{L_t^4 L_x^8(I\times \R^2)}^{4\delta}||u||_{L_t^4H_x^{1,4}(I\times \R^2)}^{2},\] where $M=||u(0)||_{L_x^2}^{2}$. 
\begin{proof}Let $0<\delta<\frac{1}{3}$, $K\gg 1$, and, as before, $I_{K}=\{t: ||u(t)||_{L_x^\infty}>K\}$.  

We note that \[||\grad|^k F_2(u)|\lesssim ||\grad|^ku|\cdot |u|^2 \cdot |u|^2e^{4\pi |u|^2},\] for $k\in \{0,1\}$. 

We first estimate the nonlinearity when $t\not\in I_{K}$. In this case, we have that \begin{align*}||\grad|^kF_2(u(t,x))|&\lesssim_{K} ||\grad|^k u(t,x)||u(t,x)|^4.\end{align*} So on the set $(\R\setminus I_{K})\times \R^2$ we have 
\begin{align*}
|||\grad|^kF_2(u)||_{L_t^{\frac{2}{1+2\delta}}L_x^{\frac{1}{1-\delta}}}&\lesssim_{K}|| |\grad|^ku||_{L_t^{\frac{2}{\delta}}L_x^{\frac{2}{1-\delta}}} || u^2||_{L_t^{\frac{1}{\delta}}L_x^{\frac{2}{\delta}}} ||u^2||_{L_t^{\frac{2}{1-\delta}}L_x^{\frac{2}{1-2\delta}}}\\ &\lesssim_{K}|| |\grad|^ku||_{L_t^{\frac{2}{\delta}}L_x^{\frac{2}{1-\delta}}} || u||_{L_t^{\frac{2}{\delta}}L_x^{\frac{4}{\delta}}}^2 ||u^4||_{L_t^{\frac{1}{1-\delta}}L_x^{\frac{1}{1-2\delta}}}^\frac{1}{2}.
\end{align*}

By interpolation, \begin{align}\label{interp}|||\grad|^ku||_{L_t^{\frac{2}{\delta}}L_x^\frac{2}{1-\delta}}\lesssim |||\grad|^ku||_{L_t^\infty L_x^2}^{1-2\delta}|||\grad|^ku||_{L_t^4 L_x^4}^{2\delta}\lesssim ||u||_{L_t^4H_{x}^{1,4}}^{2\delta}.\end{align}

We have seen from \eqref{lowlowest} in Proposition \ref{GMTSP1} that, since $(1-\delta)+(1-2\delta)>1$,
\[||u^4||_{L_t^{\frac{1}{1-\delta}}L_x^{\frac{1}{1-2\delta}}}^\frac{1}{2}\lesssim (||u||_{L_t^4H_{x}^{1,4}}^{4(1-\delta)})^{\frac{1}{2}}=||u||_{L_t^4H_{x}^{1,4}}^{2-2\delta}.\]

By the Gagliardo-Nirenberg interpolation inequality, we know that \[||u(t)||_{L_x^{\frac{4}{\delta}}}\lesssim ||\grad u(t)||_{L_x^2}^{1-2\delta}||u(t)||_{L_x^{8}}^{2\delta}.\] So, integrating in time, we see that \begin{align}\label{gn}||u||_{L_t^{\frac{2}{\delta}}L_x^{\frac{4}{\delta}}}\lesssim ||\grad u||_{L_t^\infty L_x^2}^{1-2\delta}||u||_{L_t^4L_x^8}^{2\delta}.\end{align} Putting this together, we see that on $(\R\setminus I_{K})\times \R^2$ that \[|||\grad|^kF_2(u)||_{L_t^{\frac{2}{1+2\delta}}L_x^{\frac{1}{1-\delta}}}\lesssim ||u||_{L_t^4L_x^8}^{4\delta}||u||_{L_t^4H_x^{1,4}}^{2},\] as desired.

We now turn to estimate the nonlinearity when $t\in I_{K}$. By ($\ref{MTMINUS}$) in Remark \ref{difftech}, for each $1\le p,q\le \infty$ with $\frac{q'}{p}>||u||_{L_t^\infty \dot{H}_x^1}$ we may find a $K>0$ so that  \[||e^{4\pi |u|^2}-1||_{L_t^p L_x^q(I_{K}\times \R^2)}\lesssim ||u||_{L_t^4H_{x}^{1,4}}^\frac{4}{p}.\] So by H\"older we find that on the spacetime region $I_{K}\times \R^2$ \[|||\grad|^k F_2(u)||_{L_t^\frac{2}{1+2\delta}L_x^{\frac{1}{1-\delta}}}\lesssim |||\grad|^ku||_{L_t^\frac{2}{\delta} L_x^{\frac{2}{1-\delta}}}||u||_{L_t^\frac{2}{\delta}L_x^\frac{4}{\delta}}^2||e^{4\pi |u|^2}-1||_{L_t^\frac{2}{1-\delta} L_x^{\frac{2}{1-2\delta}}}.\] 

As \[\lim_{\delta\to 0}\tfrac{\left(\frac{2}{1-2\delta}\right)'}{\frac{2}{1-\delta}}=1>||u||_{L_t^\infty \dot{H}_x^1},\] for sufficiently small $\delta>0$ we know that  \[||e^{4\pi |u|^2}-1||_{L_t^{\frac{2}{1-\delta}}L_x^{\frac{2}{1-2\delta}}(I_{K}\times \R^2)}\lesssim ||u||_{L_t^4H_{x}^{1,4}}^{2(1-\delta)}.\] 

Estimating the other terms similarly, as in \eqref{gn} and \eqref{interp}, and putting everything together, we see that 
\begin{align*}|||\grad|^k F_2(u)||_{L_t^\frac{2}{1+2\delta}L_x^{1,\frac{1}{1-\delta}}}\lesssim C(H,M)||u||_{L_t^4L_x^8}^{4\delta}||u||_{L_t^4H_{x}^{1,4}}^2,\end{align*} as desired. 
\end{proof}
\end{corollary}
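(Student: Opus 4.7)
The strategy is to bound $F_2$ and $\nabla F_2$ by the pointwise estimate
\[
||\grad|^k F_2(u)| \lesssim ||\grad|^k u| \cdot |u|^2 \cdot |u|^2 \cdot e^{4\pi|u|^2}, \qquad k \in \{0,1\},
\]
(which is a quintic expansion near zero that carries the exponential), and to target the dual Schr\"odinger-admissible pair $(p,q) = (2/(1+2\delta),\, 1/(1-\delta))$ for a small parameter $\delta \in (0, 1/3)$ to be chosen in terms of $H$. The plan is to split $I$ into the low-amplitude set $I\setminus I_K$ and the high-amplitude set $I\cap I_K$, where $I_K = \{t: ||u(t)||_{L_x^\infty} > K\}$ for some large $K$. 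On the first set the exponential is inert; on the second we deploy Moser--Trudinger--Strichartz via Remark \ref{difftech}. The key parameter $\delta$ will be chosen small enough that certain exponent ratios exceed $||u||_{L_t^\infty \dot{H}_x^1}^2$, which is bounded away from $1$ because $H_2(u) \le H < 1$ forces $||\grad u(t)||_{L^2}^2 \le H$.

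On $I\setminus I_K$, replace $e^{4\pi|u|^2}$ by the constant $e^{4\pi K^2}$. What remains is $||\grad|^k u| \, |u|^4$ in $L_t^{2/(1+2\delta)}L_x^{1/(1-\delta)}$. I would distribute via H\"older into three pieces: place $|\grad|^k u$ in $L_t^{2/\delta} L_x^{2/(1-\delta)}$ (interpolating $L_t^\infty L_x^2$ with $L_t^4 L_x^4$ to produce a $||u||_{L_t^4 H_x^{1,4}}^{2\delta}$ factor); place one copy of $|u|^2$ in $L_t^{1/\delta} L_x^{2/\delta}$ and invoke Gagliardo--Nirenberg, $||u(t)||_{L_x^{4/\delta}} \lesssim ||\grad u(t)||_{L_x^2}^{1-2\delta} ||u(t)||_{L_x^8}^{2\delta}$, to extract the Morawetz factor $||u||_{L_t^4 L_x^8}^{4\delta}$; and place the other $|u|^2$ in $L_t^{2/(1-\delta)} L_x^{2/(1-2\delta)}$, controlled by $||u||_{L_t^4 H_x^{1,4}}^{2(1-\delta)}$ via the low-region argument already carried out in the proof of Proposition \ref{GMTSP1} (since $(1-\delta)+(1-2\delta)>1$ there). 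A quick exponent check confirms the H\"older sums $\delta/2 + \delta + (1-\delta)/2 = (1+2\delta)/2$ on the time side and $(1-\delta)/2 + \delta/2 + (1-2\delta)/2 = 1-\delta$ on the space side.

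On $I\cap I_K$, I would use the identical H\"older partition but retain the exponential factor:
\[
|| |\grad|^k F_2(u)||_{L_t^{2/(1+2\delta)} L_x^{1/(1-\delta)}} \lesssim |||\grad|^k u||_{L_t^{2/\delta} L_x^{2/(1-\delta)}} \cdot ||u||_{L_t^{2/\delta} L_x^{4/\delta}}^{2} \cdot ||e^{4\pi |u|^2} - 1||_{L_t^{2/(1-\delta)} L_x^{2/(1-2\delta)}}.
\]
The first two factors are handled as above, again yielding the $||u||_{L_t^4 L_x^8}^{4\delta} ||u||_{L_t^4 H_x^{1,4}}^{2\delta}$ combination. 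For the exponential factor, I note that the H\"older conjugate ratio $(2/(1-2\delta))'/(2/(1-\delta)) \to 1$ as $\delta \to 0^+$, so since $||u||_{L_t^\infty \dot{H}_x^1}^2 \le H < 1$ we may shrink $\delta = \delta(H)$ until this ratio strictly exceeds $||u||_{L_t^\infty \dot{H}_x^1}$; then the interpolation bound \eqref{MTMINUS} from Remark \ref{difftech} yields $||e^{4\pi|u|^2}-1||_{L_t^{2/(1-\delta)} L_x^{2/(1-2\delta)}(I_K\times \R^2)} \lesssim ||u||_{L_t^4 H_x^{1,4}}^{2(1-\delta)}$, choosing $K$ accordingly.

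The main obstacle is not any deep inequality --- all the real work is already done in Proposition \ref{GMTSP1} and Remark \ref{difftech} --- but rather the bookkeeping required to route five factors through H\"older in a way that (i) respects dual admissibility, (ii) produces exactly two copies of the $L_t^4 H_x^{1,4}$ norm in the final bound, (iii) produces exactly $4\delta$ copies of the Morawetz norm $L_t^4 L_x^8$, and (iv) still leaves enough slack in the exponential factor's exponents to invoke Moser--Trudinger--Strichartz. The constraint $H_2 \le H < 1$ is the crucial input that makes choice (iv) feasible, and the smallness of the resulting $\delta$ is exactly what the perturbation theory downstream can tolerate.
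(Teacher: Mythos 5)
Your proposal is correct and follows essentially the same route as the paper: the same pointwise quintic-times-exponential bound, the same splitting into $I_K$ and its complement, the identical H\"older partition onto the dual pair $(\tfrac{2}{1+2\delta},\tfrac{1}{1-\delta})$, with interpolation for the gradient factor, Gagliardo--Nirenberg to extract the $\|u\|_{L_t^4L_x^8}^{4\delta}$ Morawetz factor, the low-amplitude estimate \eqref{lowlowest} from Proposition \ref{GMTSP1}, and \eqref{MTMINUS} from Remark \ref{difftech} for the exponential on $I_K$, with $\delta=\delta(H)$ chosen small using $\|u\|_{L_t^\infty\dot H_x^1}^2\le H<1$. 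No substantive differences to report.
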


 \subsection{Local Moser--Trudinger--Strichartz}  Employing similar techniques and using H\"older's inequality in time, we may derive a local-in-time Moser--Trudinger--Strichartz inequality. As in the global case, an estimate on the $L_t^\infty L_x^1$ norm will follow from the Moser--Trudinger inequality, an estimate on the $L_t^pL_x^\infty$ norm will follow from the Br\'ezis--Wainger--Gallou\"et inequality, and a general spacetime estimate will follow from interpolating in-between. We give the statement in the energy-subcritical setting where it is of most use. As a corollary, we will derive dual Strichartz estimates on the nonlinearity which will streamline the well-posedness theory in the next section. 
 
 For the sake of exposition, we introduce the following function space: on a slab $I\times \R^2$, we define $X(I)$ to be the closure of test functions under the norm \begin{align*}
||u||_{X(I)}&:=||u||_{L_t^4H_x^{1,4}(I\times \R^2)}+||u||_{L_t^\infty H_x^{1}(I\times \R^2)}.
\end{align*} In the special case $I=[-T,T]$ we simply write $X_T=X([-T,T])$. 

\begin{lemma} \label{LLMTS}Suppose $u\in L_t^\infty H_{x}^{1}([-T,T]\times \R^2)$ satisfies \[||u||_{L_t^\infty\dot{H}_x^1}\le A<1\qtq{and}||u||_{L_{T}^\infty L_x^2(\R^2)}^2=M.\] If $\beta\in (0,1]$ and $1\le p<\infty$ satisfy \[A^2<\frac{1}{p\beta},\] then there exists \[0<\gamma=\gamma(A,M,p,\beta)<\min\{4\beta,\tfrac{4}{p}\}\] and $\ep_0=\ep_0(A,M,\beta,p)>0$ so that \begin{align}\label{LMTSeq}||e^{4\pi (1+\ep)\beta |u|^2}-1||_{L_t^{p}L_x^\infty([-T,T]\times \R^2)}\lesssim_{A,M} T^{\frac{4-p\gamma}{4p}}(T^{\frac{1}{4}}+||u||_{L_T^4H_{x}^{1,4}})^{\gamma}.\end{align}for all $0<\ep<\ep_0$. Moreover, if $p\beta\le 1$, then $0<\gamma=\gamma(A,M,\beta)<4\beta$ and $\ep_0=\ep_0(A,M)$. 
\begin{proof}We begin our proof, as we did in Lemma $\ref{LEST}$, by choosing some parameters first. Choose $1\ge \mu=\mu(A,M,\beta,p)>0$, $\ep_0=\ep_0(A,M,\beta,p)>0$, and $\alpha=\alpha(A,M,\beta,p)>1$ so that \begin{align}\label{paramparam} \alpha (1+\ep_0)^2(A^2+\mu M)<\min\{1,\tfrac{1}{p\beta}\}.\end{align} 

Let $0<\ep<\ep_0$. By Proposition $\ref{SBWG}$ we know that for some $C_{\alpha}>1$ that \[\exp(4\pi (1+\ep)\beta ||u(t)||_{L_x^\infty}^2)\le \exp\left(4\alpha \beta (1+\ep)||u(t)||_{H_{\mu}^1}^2\log \left(C_{\alpha}+\tfrac{c_{4}(8/\mu)^{\frac{1}{2}}||u(t)||_{H_{x}^{1,4}}}{||u(t)||_{H_{\mu}^1}}\right)\right)\] We recall that for $a>1$ and $b>0$ the function \[x\mapsto x^2\log(a+\tfrac{b}{x})\] is increasing for $x>0$. Since $||u(t)||_{H_{\mu}^1}\le \sqrt{A^2+\mu M}$, it follows that\begin{align} \exp(4\pi (1+\ep)\beta ||u(t)||_{L_x^\infty}^2)&\lesssim_{A} \left(1+\tfrac{||u(t)||_{H_{x}^{1,4}}}{\sqrt{A^2+\mu M}}\right)^{4\alpha \beta(1+\ep)(A^2+\mu M)}\end{align}

Let $\gamma:=4\alpha \beta(1+\ep)(A^2+\mu M)$. By $\eqref{paramparam}$ we know that $0<\gamma<\min\{4\beta,4/p\}$ and, in particular, that $p\gamma<4$. 

Integrating in time, by H\"older's inequality we see that
\begin{align*}
||e^{4\pi (1+\ep)\beta |u|^2}||_{L_t^pL_x^\infty}&\lesssim_{A,M}||1+\tfrac{||u(t)||_{H_{x}^{1,4}}}{A}||_{L_t^{p\gamma}}^{\gamma}\\ &\lesssim_{A,M}T^{\frac{4-p\gamma}{4p}}||1+\tfrac{||u(t)||_{H_{x}^{1,4}}}{A}||_{L_T^{4}}^{\gamma}\\ &\lesssim_{A,M} T^{\frac{4-p\gamma}{4p}}(T^{\frac{1}{4}}+||u||_{L_T^4H_{x}^{1,4}})^{\gamma},
\end{align*} as desired. 
\end{proof}
\end{lemma}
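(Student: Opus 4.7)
The plan is to pointwise apply the sharp Brézis--Wainger--Gallouët inequality (Proposition \ref{SBWG}) at $r=4$ to the $L_x^\infty$ norm of $u(t,\cdot)$, then integrate in time and absorb the $T$-factor via Hölder. The budget we have available is the assumed gap $A^2 < \frac{1}{p\beta}$, and this gap must survive the unavoidable small losses coming from the BWG constant $\alpha>1$, from inserting the full $H_\mu^1$ norm in place of $\dot H^1$, and from the factor $(1+\ep)$ sitting in the exponent.

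First I would fix parameters. Choose $\alpha = \alpha(A,M,\beta,p) > 1$ close to $1$, $\mu = \mu(A,M,\beta,p)\in(0,1]$ close to $0$, and $\ep_0 = \ep_0(A,M,\beta,p) > 0$ small, so that the single inequality
\[
\alpha(1+\ep_0)^2\bigl(A^2+\mu M\bigr) \;<\; \min\!\Bigl\{1,\tfrac{1}{p\beta}\Bigr\}
\]
holds. This is possible precisely because $A^2<\min\{1,1/(p\beta)\}$ under our hypotheses. For the ``moreover'' clause, $p\beta\le 1$ collapses the minimum to the constant $1$, and since $A<1$ the condition is automatic; thus $\alpha,\mu,\ep_0$ may be chosen depending only on $A,M,\beta$.

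Next, for each $t\in[-T,T]$ apply Proposition \ref{SBWG} with $r=4$ and the above $\alpha$ to bound $\|u(t)\|_{L_x^\infty}^2$ by $\frac{\alpha}{\pi}\,\|u(t)\|_{H_\mu^1}^2\log\!\bigl[C_\alpha+c_4(8/\mu)^{1/2}\|u(t)\|_{H_x^{1,4}}/\|u(t)\|_{H_\mu^1}\bigr]$. Exponentiating with multiplier $4\pi(1+\ep)\beta$ and exploiting monotonicity of $x\mapsto x^2\log(a+b/x)$ on $x>0$ (which lets me replace $\|u(t)\|_{H_\mu^1}^2$ in the coefficient by its uniform upper bound $A^2+\mu M$, and similarly in the denominator inside the log), I obtain the pointwise-in-$t$ bound
\[
\exp\!\bigl(4\pi(1+\ep)\beta\,\|u(t)\|_{L_x^\infty}^2\bigr) \;\lesssim_{A,M}\; \Bigl(1+\tfrac{\|u(t)\|_{H_x^{1,4}}}{\sqrt{A^2+\mu M}}\Bigr)^{\gamma}
\]
with exponent $\gamma := 4\alpha\beta(1+\ep)(A^2+\mu M)$. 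The parameter choice above was rigged exactly so that $0<\gamma<\min\{4\beta,4/p\}$; in particular $p\gamma<4$.

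Finally, I take $L_t^p$ on $[-T,T]$ and apply Hölder in time with exponents $p\gamma$ and its conjugate to trade the time integration against a power of $T$:
\[
\bigl\|e^{4\pi(1+\ep)\beta|u|^2}-1\bigr\|_{L_t^pL_x^\infty([-T,T]\times\R^2)}
\;\lesssim_{A,M}\;
T^{\frac{4-p\gamma}{4p}}\Bigl(T^{1/4}+\|u\|_{L_T^4 H_x^{1,4}}\Bigr)^{\gamma},
\]
which is exactly the desired bound. The only subtle point, and the main obstacle, is the simultaneous selection of $\alpha,\mu,\ep_0$: each is forced to be close to its extreme value, but the strict inequality $A^2<1/(p\beta)$ guarantees a nonempty feasible region, and one must be careful that every ``$\lesssim_{A,M}$'' constant that appears has truly been absorbed into functions of $A,M$ (and $\beta,p$) before declaring the parameters. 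Once parameters are fixed, the remaining steps (BWG, monotonicity, Hölder) are mechanical.
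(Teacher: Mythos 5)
Your proposal is correct and follows essentially the same route as the paper's own proof: fix $\alpha,\mu,\ep_0$ via the single constraint $\alpha(1+\ep_0)^2(A^2+\mu M)<\min\{1,\tfrac{1}{p\beta}\}$, apply the sharp Br\'ezis--Wainger--Gallou\"et inequality at $r=4$ together with the monotonicity of $x\mapsto x^2\log(a+\tfrac{b}{x})$, set $\gamma=4\alpha\beta(1+\ep)(A^2+\mu M)$, and finish with H\"older in time to extract the factor $T^{\frac{4-p\gamma}{4p}}$. The handling of the ``moreover'' clause (the minimum collapsing to $1$ when $p\beta\le 1$) also matches the paper.
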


Combining Lemma $\ref{LLMTS}$ with Proposition $\ref{MT1}$ we obtain the following corollary by interpolation. 
\begin{theorem}[Local Moser--Trudinger--Strichartz]\label{LMTS}Suppose $u\in L_t^\infty H_{x}^{1}([-T,T]\times \R^2)$ satisfies \[||u||_{L_t^\infty\dot{H}_x^1}\le A<1\qtq{and}||u||_{L_{T}^\infty L_x^2(\R^2)}^2=M.\] If $1\le p\le \infty$ and $1<q\le \infty$ satisfy \[\frac{q'}{p}> A^2\] then there exists $0<\gamma=\gamma(A,M,p,q)<\min\{\frac{4}{q'},\frac{4}{p}\}$ and $\ep_0=\ep_0(A,p,q)>0$ so that \begin{align}\label{LMTSCon} ||e^{4\pi (1+\ep)|u|^2}-1||_{L_T^pL_x^q}\lesssim ||u||_{L_t^\infty L_{x}^2}^{\frac{2}{q}}T^{\frac{4-p\gamma}{4p}}(T^{\frac{1}{4}}+||u||_{L_T^4H_{x}^{1,4}})^{\gamma}.\end{align} for every $0<\ep<\ep_0$. Moreover, if $\frac{q'}{p}\ge 1$, then $0<\gamma=\gamma(A,M,q)<\frac{4}{q'}$ and $\ep_0=\ep_0(A,M)$. 
\begin{proof}Taking $\beta=\tfrac{1}{q'}$ in Lemma $\ref{LLMTS}$ we obtain an $\ep_0=\ep_0(A,M)>0$ and  a $0<\gamma<\tfrac{4}{q'}$ so that $\eqref{LMTSeq}$ holds for all $0<\ep<\ep_0$ and $p\in [1,q']$. By interpolation and Proposition $\ref{MT1}$ we see that 
\begin{align*}
||e^{4\pi |u(t)|^2}-1||_{L_x^q}&\le ||e^{4\pi |u(t)|^2}-1||_{L_x^1}^{\frac{1}{q}}||e^{4\pi |u(t)|^2}-1||_{L_{x}^{\infty}}^{\frac{1}{q'}}\\ &\lesssim ||u(t)||_{L_x^2}^\frac{2}{q}||e^{\frac{4\pi }{q'}|u(t)|^2}-1||_{L_x^\infty}.
\end{align*}Integrating in time we see by Lemma $\ref{LLMTS}$ that \begin{align*}
||e^{4\pi |u|^2}-1||_{L_T^pL_x^q}&\lesssim ||u||_{L_t^\infty L_{x}^2}^{\frac{2}{q}}||e^{\frac{4\pi}{q'}|u|^2}-1||_{L_x^pL_{x}^{\infty}}\\ &\lesssim_{A,M} ||u||_{L_t^\infty L_{x}^2}^{\frac{2}{q}}T^{\frac{4-p\gamma}{4p}}(T^{\frac{1}{4}}+||u||_{L_T^4H_{x}^{1,4}})^{\gamma},
\end{align*} as desired. 

\end{proof}
\end{theorem}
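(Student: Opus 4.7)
The strategy is to freeze $t$, interpolate the $L_x^q$ norm between the $L_x^1$ and $L_x^\infty$ norms, then bound the uniform-in-$t$ $L_x^1$ factor by Proposition \ref{MT1} and the $L_t^p L_x^\infty$ factor by Lemma \ref{LLMTS}. Taking $\beta = 1/q'$ in Lemma \ref{LLMTS} is precisely what converts its parameter constraint $A^2 < 1/(p\beta)$ into the hypothesis $q'/p > A^2$ of the theorem.

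Concretely, set $f(t,x) := e^{4\pi(1+\ep)|u(t,x)|^2} - 1$. I would begin at each fixed $t$ from the log-convexity inequality
\[
||f(t)||_{L_x^q} \le ||f(t)||_{L_x^1}^{1/q}\,||f(t)||_{L_x^\infty}^{1/q'}.
\]
For the $L_x^1$ factor, choose $\ep_0 = \ep_0(A) > 0$ so small that $4\pi(1+\ep)A^2 < 4\pi$ for all $\ep \in (0,\ep_0)$; applying Proposition \ref{MT1} to $u(t)/||\grad u(t)||_{L_x^2}$, whose $\dot H_x^1$ norm is at most $1$, and unraveling the rescaling gives $||f(t)||_{L_x^1} \lesssim_A ||u(t)||_{L_x^2}^2$, uniformly in $t$. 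For the $L_x^\infty$ factor, use the elementary pointwise bound $(e^a - 1)^{1/q'} \le e^{a/q'}$ for $a \ge 0$, so that with $\beta := 1/q'$,
\[
||f(t)||_{L_x^\infty}^{1/q'} \le e^{(4\pi(1+\ep)/q')||u(t)||_{L_x^\infty}^2} \le 1 + ||e^{4\pi(1+\ep)\beta|u(t)|^2} - 1||_{L_x^\infty}.
\]

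The hypothesis $q'/p > A^2$ is exactly the condition $A^2 < 1/(p\beta)$ needed to apply Lemma \ref{LLMTS} with this choice of $\beta$. Doing so (shrinking $\ep_0$ further if needed) produces $\gamma = \gamma(A,M,p,q) \in (0,\min\{4/q', 4/p\})$ and $\ep_0 = \ep_0(A,M,p,q) > 0$ such that, for every $0 < \ep < \ep_0$,
\[
||e^{4\pi(1+\ep)\beta|u|^2} - 1||_{L_T^p L_x^\infty} \lesssim_{A,M} T^{(4-p\gamma)/(4p)}\bigl(T^{1/4} + ||u||_{L_T^4 H_x^{1,4}}\bigr)^\gamma.
\]
Taking $L_t^p$ of the pointwise interpolation and pulling out the uniform-in-$t$ $L_x^1$ factor yields
\[
||f||_{L_T^p L_x^q} \lesssim_A ||u||_{L_t^\infty L_x^2}^{2/q}\,\bigl(T^{1/p} + ||e^{4\pi(1+\ep)\beta|u|^2} - 1||_{L_T^p L_x^\infty}\bigr),
\]
and the stray $T^{1/p}$ is absorbed into $T^{(4-p\gamma)/(4p)}\cdot T^{\gamma/4}$, i.e.\ the right-hand side of Lemma \ref{LLMTS} evaluated with $||u||_{L_T^4 H_x^{1,4}} = 0$. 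The ``moreover'' clause, which weakens $q'/p > A^2$ to $q'/p \ge 1$, transfers verbatim from the corresponding $p\beta \le 1$ clause of Lemma \ref{LLMTS}.

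There is no serious analytic obstacle; the principal difficulty is bookkeeping the small parameters. One must pick $\ep$ small enough simultaneously to invoke Proposition \ref{MT1} (requiring $(1+\ep)A^2 < 1$) and to enter the regime of Lemma \ref{LLMTS} with $\beta = 1/q'$, while the extracted $\gamma$ must lie strictly below both $4/q'$ and $4/p$ so that the time exponent $(4-p\gamma)/(4p)$ is nonnegative. The hypothesis $q'/p > A^2$ is exactly what makes these choices simultaneously feasible. The endpoint $p = \infty$ is essentially degenerate (it forces $A = 0$), and $q = \infty$ reduces the interpolation to a bare $L_x^\infty$ estimate that falls directly within Lemma \ref{LLMTS}.
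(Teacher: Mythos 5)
Your proposal is correct and follows essentially the same route as the paper: interpolate $L_x^q$ between $L_x^1$ and $L_x^\infty$ at each fixed time, control the $L_x^1$ factor by Proposition \ref{MT1} (after rescaling by the $\dot H_x^1$ norm, using $A<1$), and control the $L_t^pL_x^\infty$ factor by Lemma \ref{LLMTS} with $\beta = 1/q'$, which is exactly how the hypothesis $q'/p > A^2$ enters. Your bookkeeping of the $(1+\ep)$ factor and the absorption of the stray $T^{1/p}$ term is if anything slightly more careful than the paper's write-up, and the ``moreover'' clause transfers from the $p\beta\le 1$ case of Lemma \ref{LLMTS} just as you say.
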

\begin{corollary} \label{NLH1}Suppose $u_1,u_2\in L_t^\infty H_{x}^{1}([-T,T]\times \R^2)$ satisfy \[||u_i||_{L_t^\infty\dot{H}_x^1}\le A<1\qtq{and}||u_i||_{L_{T}^\infty L_x^2(\R^2)}^2=M.\] for each $i\in \{1,2\}$, then there is some $0<\gamma=\gamma(A)<3$ so that \begin{align*}||F_i(u_1)-F_i(u_2)||_{L_T^1L_x^2}\lesssim_{A,M}  ||u_1-u_2||_{X_T}\sum_{j=1,2} ||u_j||_{X_T}^{\frac{1}{2}}T^{\frac{3-\gamma}{4}}(T^{\frac{1}{4}}+||u_j||_{X_T})^{\gamma}.\end{align*}
\begin{proof} Our pointwise estimates $\eqref{F1diff}$ and $\eqref{F2diff}$ imply that for any $\ep>0$ that 
\begin{align*}||F_i(u_1)-F_i(u_2)||_{L_T^1 L_x^2}&\lesssim ||(u_1-u_2)\sum_{j=1,2}e^{4\pi (1+\ep)|u_j|^2}-1||_{L_T^1 L_x^2}\\ &\lesssim ||u_1-u_2||_{L_T^4L_x^4}\sum_{j=1,2}||e^{4\pi(1+\ep)|u_j|^2}-1||_{L_T^\frac{4}{3}L_x^4}.\end{align*}Applying Theorem \ref{LMTS} with $q'=p=\frac{4}{3}$ we obtain an $\ep_0=\ep_0(A)>0$ and a $0<\gamma<3$ so that \eqref{LMTSCon} holds for all $0<\ep<\ep_{0}$. This grants us that \[||F_i(u_1)-F_i(u_2)||_{L_T^1 L_x^2}\lesssim ||u_1-u_2||_{L_T^4L_x^4}\sum_{j=1,2} ||u_j||_{L_t^\infty L_{x}^2}^{\frac{1}{2}}T^{\frac{3-\gamma}{4}}(T^{\frac{1}{4}}+||u_j||_{L_T^4H_{x}^{1,4}})^{\gamma},\] as desired. 
\end{proof}\end{corollary}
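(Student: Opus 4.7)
The plan is to reduce Corollary \ref{NLH1} to a direct application of the Local Moser--Trudinger--Strichartz Theorem \ref{LMTS} via a pointwise bound and a single H\"older split. First I would invoke the pointwise estimate \eqref{F1diff2}, which for arbitrary $\ep>0$ controls
\[
|F_i(u_1)-F_i(u_2)| \lesssim_{\ep} |u_1-u_2|\sum_{j=1,2}\bigl(e^{4\pi(1+\ep)|u_j|^2}-1\bigr).
\]
This reduces matters to bounding each summand in $L_T^1 L_x^2$.

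Next I would apply H\"older's inequality with the splittings $\tfrac{1}{1}=\tfrac{1}{4}+\tfrac{3}{4}$ in time and $\tfrac{1}{2}=\tfrac{1}{4}+\tfrac{1}{4}$ in space, yielding
\[
\bigl\|(u_1-u_2)\bigl(e^{4\pi(1+\ep)|u_j|^2}-1\bigr)\bigr\|_{L_T^1 L_x^2}
\le
\|u_1-u_2\|_{L_T^4 L_x^4}\cdot\bigl\|e^{4\pi(1+\ep)|u_j|^2}-1\bigr\|_{L_T^{4/3}L_x^4}.
\]
The first factor is dominated by $\|u_1-u_2\|_{X_T}$ since $L_T^4 H_x^{1,4}\hookrightarrow L_T^4 L_x^4$.

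For the exponential factor I would call on Theorem \ref{LMTS} with the choice $p=\tfrac{4}{3}$ and $q=4$, so that $q'=\tfrac{4}{3}$ and $q'/p=1$. This is exactly the regime where the hypothesis $q'/p>A^2$ is satisfied by our assumption $A<1$; moreover this is the borderline case $q'/p\ge 1$ of the theorem, which furnishes $\ep_0=\ep_0(A,M)>0$ and $0<\gamma=\gamma(A,M)<\tfrac{4}{q'}=3$ such that for all $0<\ep<\ep_0$,
\[
\bigl\|e^{4\pi(1+\ep)|u_j|^2}-1\bigr\|_{L_T^{4/3}L_x^4}
\lesssim_{A,M} \|u_j\|_{L_T^\infty L_x^2}^{1/2}\, T^{\frac{3-\gamma}{4}}\bigl(T^{1/4}+\|u_j\|_{L_T^4 H_x^{1,4}}\bigr)^{\gamma}.
\]
Using $\|u_j\|_{L_T^\infty L_x^2}^{1/2}\le \|u_j\|_{X_T}^{1/2}$ and $\|u_j\|_{L_T^4 H_x^{1,4}}\le \|u_j\|_{X_T}$ and fixing any admissible $\ep$ in the pointwise bound then assembles the claimed inequality.

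There is no serious obstacle in this argument; the only subtlety is ensuring that the exponent choice is compatible with the strict inequality $A<1$. The choice $p=q'=4/3$ is forced by the desire to place the difference $u_1-u_2$ in the Strichartz-admissible norm $L_T^4 L_x^4$ that is readily controlled by $X_T$, and it happens to land precisely at the endpoint $q'/p=1$ of Theorem \ref{LMTS}, which is why the bound $\gamma<3$ (rather than something depending on $A$) appears. The condition $A<1$ is what keeps us in the admissible regime $q'/p>A^2$.
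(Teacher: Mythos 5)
Your argument is correct and is essentially the paper's own proof: the same pointwise bound reducing $F_i(u_1)-F_i(u_2)$ to the difference times $e^{4\pi(1+\ep)|u_j|^2}-1$, the same H\"older split into $L_T^4L_x^4$ and $L_T^{4/3}L_x^4$, and the same application of Theorem \ref{LMTS} with $p=\tfrac{4}{3}$, $q=4$ at the borderline $q'/p=1$ permitted by $A<1$. The only (harmless) difference is that you cite \eqref{F1diff2} directly and note the $M$-dependence of $\gamma$ in the endpoint case, which is if anything slightly more careful than the paper's statement.
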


\begin{corollary}\label{NLH2}Suppose $u_1,u_2\in L_t^\infty H_{x}^{1}([-T,T]\times \R^2)$ satisfy \[||u_i||_{L_t^\infty\dot{H}_x^1}\le A<1\qtq{and}||u_i||_{L_{T}^\infty L_x^2(\R^2)}^2=M\] for each $i\in \{1,2\}$. Then there is some $0<\gamma=\gamma(A,M)<3$ and $\delta=\delta(A)>0$ so that \begin{align*}||F_i(u_1)-F_i(u_2)||_{L_T^1\dot{H}_x^1}&\lesssim_{A}  T^{\frac{3}{4}}||u_1-u_2||_{X_T}\sum_{j=1,2}(||u_j||_{X_{T}}+||u_j||_{X_{T}}^2)\\ &+T^{\frac{3-\gamma}{4}}||u_1-u_2||_{X_T}\sum_{j=1,2}(||u_j||_{X_T}^{\frac{1-4\delta}{2}}+||u_j||_{X_T}^{\frac{3-4\delta}{2}})(T^{\frac{1}{4}}+||u_j||_{X_{T}})^{\gamma}.\end{align*}
\begin{proof} Write 
\begin{align*}\grad F_i(u_1)-\grad F_i(u_2)&= [(\partial_zF_i)(u_2)](\grad u_1-\grad u_2)+\grad u_1[(\partial_zF_i)(u_1)-(\partial_zF_i)(u_2)]\\ &+[(\partial_{\bar{z}}F_i)(u_2)]\overline{(\grad u_1-\grad u_2)}+\overline{\grad u_1}[(\partial_{\bar{z}}F_i)(u_1)-(\partial_{\bar{z}}F_i)(u_2)\\ &=(I)+(II)+(III)+(IV).\end{align*} 

We first bound $(I)+(III)$. Towards this end, note that by H\"older and our pointwise bound \eqref{F1derivdiff2} we obtain that
\begin{align*}
||(I)+(III)||_{L_T^1L_x^2}&\lesssim ||\grad u_1-\grad u_2||_{L_T^4L_x^4}(||(\partial_z F_i)(u_2)||_{L_T^\frac{4}{3}L_x^4}+||(\partial_{\bar{z}} F_i)(u_2)||_{L_T^\frac{4}{3}L_x^4})\\ &\lesssim ||\grad u_1-\grad u_2||_{L_T^4L_x^4}|||u_2|(|u_2|+(e^{4\pi (1+\ep)|u_2|^2}-1))||_{L_T^\frac{4}{3}L_{x}^{4}}\\ &\lesssim ||\grad u_1-\grad u_2||_{L_T^4L_x^4}(||u_2||_{L_T^{\frac{8}{3}}L_{x}^{8}}^2+||u_2(e^{4\pi (1+\ep)|u_2|^2}-1)||_{L_T^\frac{4}{3}L_{x}^{4}})\\ &\lesssim ||\grad u_1-\grad u_2||_{L_T^4L_{x}^{4}}(T^{\frac{3}{4}}||u_{2}||_{L_T^{\infty}L_x^8}^2+||u_2(e^{4\pi (1+\ep)|u_2|^2}-1)||_{L_T^\frac{4}{3}L_{x}^{4}}).
\end{align*}

To control the spacetime norm involving the exponential term, we argue as follows: for each $\delta>0$ we have by H\"older's inequality that
\begin{align*}
||u_2(e^{4\pi (1+\ep)|u_2|^2}-1)||_{L_T^\frac{4}{3}L_x^4}\le ||u_2||_{L_T^\infty L_x^{\frac{1}{\delta}}}||e^{4\pi (1+\ep)|u_2|^2}-1||_{L_T^\frac{4}{3} L_x^\frac{4}{1-4\delta}}.\end{align*}Choose $\delta=\delta(A)$ small enough to guarantee that \[A^2<\tfrac{(\tfrac{4}{1-4\delta})'}{\tfrac{4}{3}}=\tfrac{3}{3+4\delta}.\] Applying Theorem \ref{LMTS} with $p=\frac{4}{3}$ and $q=\frac{4}{1-4\delta}$ we obtain an $\ep_0=\ep_0(A)>0$ and a $0<\gamma<3$ so that \eqref{LMTSCon} holds for all $0<\ep<\ep_{0}$. This grants us that \[||e^{4\pi (1+\ep)|u_2|^2}-1||_{L_T^{\frac{4}{3}}L_x^{\frac{4}{1-4\delta}}}\lesssim_{A,M} ||u_2||_{L_t^\infty L_{x}^2}^{\frac{1-4\delta}{2}}T^{\frac{3-\gamma}{4}}(T^{\frac{1}{4}}+||u_2||_{L_T^4H_{x}^{1,4}})^{\gamma}.\] 

Putting these together and employing Sobolev Embedding \eqref{sob} we see that \begin{align*}
||(I)+(III)||_{L_T^1L_x^2}\lesssim_{A,M}||u_1-u_2||_{X_{T}}\left[T^{\frac{3}{4}}||u_2||_{X_{T}}^2+||u_2||_{X_T}^{\frac{3-4\delta}{2}}T^{\frac{3-\gamma}{4}}(T^{\frac{1}{4}}+||u_2||_{X_{T}}\right]^{\gamma}.
\end{align*}

We now bound $(II)+(IV)$. Towards this end, we note again that by H\"older and applying our pointwise bound \eqref{F1derivdiff2} with $z_2=0$ we obtain that 
\begin{align*}
||(II)+(IV)||_{L_T^1L_x^2} &\lesssim ||\grad u_1||_{L_T^4L_x^4}|||u_1-u_2|\sum_{j=1,2}(e^{4\pi (1+\ep)|u_j|^2}-1+|u_j|)||_{L_T^\frac{4}{3}L_x^4}\\ &\lesssim ||\grad u_1||_{L_T^4L_x^4}\sum_{j=1,2}|||u_1-u_2||u_j|||_{L_T^{\frac{4}{3}}L_x^4}+|||u_1-u_2|(e^{4\pi (1+\ep)|u_j|^2}-1)|||_{L_T^{\frac{4}{3}}L_x^4}.
\end{align*} The first term is straightforward to bound, by H\"older and Sobolev Embedding we obtain that \[|| |u_1-u_2||u_j|||_{L_T^{\frac{4}{3}}L_x^4}\le T^{\frac{3}{4}}||u_1-u_2||_{L_T^\infty L_x^8}||u_j||_{L_T^\infty L_x^8}\le T^{\frac{3}{4}}||u_1-u_2||_{X_T}||u_j||_{X_T}.\]We deal with the exponential term exactly as we did before with the same choice of $\delta=\delta(A)$ to obtain that 
\begin{align*}|||u_1-u_2|(e^{4\pi (1+\ep)|u_j|^2}-1)|||_{L_T^{\frac{4}{3}}L_x^4}&\lesssim ||u_1-u_2||_{L_T^\infty L_x^{\frac{1}{\delta}}}||e^{4\pi (1+\ep)|u_j|^2}-1||_{L_T^\frac{4}{3} L_x^\frac{4}{1-4\delta}}\\ &\lesssim_{A,M} ||u_1-u_2||_{X_T}||u_j||_{L_t^\infty L_{x}^2}^{\frac{1-4\delta}{2}}T^{\frac{3-\gamma}{4}}(T^{\frac{1}{4}}+||u_j||_{L_T^4H_{x}^{1,4}})^{\gamma}.\end{align*} Putting these together we see that \[||(II)+(IV)||_{L_T^1L_{x}^2}\lesssim_{A,M} ||u_1||_{X_T}||u_1-u_2||_{X_T}\sum_{j=1,2} T^{\frac{3}{4}}||u_j||_{X_T}+T^{\frac{3-\gamma}{4}}||u_j||_{X_T}^{\frac{1-4\delta}{2}}(T^{\frac{1}{4}}+||u_j||_{X_T})^{\gamma},\] which yields the conclusion the theorem. 
\end{proof}
\end{corollary}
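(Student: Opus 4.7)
My plan is to mimic the structure of the proof of Corollary \ref{NLH1}, but working in $\dot{H}_x^1$ rather than $L_x^2$, which forces us to differentiate $F_i(u_1)-F_i(u_2)$. The natural way to do this is via the Wirtinger derivatives: write
\[
\grad F_i(u_1)-\grad F_i(u_2) = (\partial_z F_i)(u_2)(\grad u_1-\grad u_2) + \grad u_1\bigl[(\partial_z F_i)(u_1)-(\partial_z F_i)(u_2)\bigr]
\]
plus the two analogous $\partial_{\bar z}$ terms. Call these $(I),(II),(III),(IV)$. The terms $(I)$ and $(III)$ involve the derivative of the nonlinearity evaluated at a single point and use only the difference $u_1-u_2$, while $(II)$ and $(IV)$ use the difference of the derivative of the nonlinearity evaluated at different points. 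Our pointwise lemmas supply bounds for both types: \eqref{F1derivdiff2} with $z_2=0$ controls $|(\partial_z F_i)(u_j)|$ by $|u_j|\bigl(|u_j|+e^{4\pi(1+\ep)|u_j|^2}-1\bigr)$, and \eqref{F1derivdiff2} as stated controls $|(\partial_z F_i)(u_1)-(\partial_z F_i)(u_2)|$ by $|u_1-u_2|\sum_j\bigl(|u_j|+e^{4\pi(1+\ep)|u_j|^2}-1\bigr)$.

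From there I would run the same splitting into a polynomial and an exponential contribution that appeared in the proof of Corollary \ref{NLH1}. For $(I)+(III)$ I would first apply H\"older with the pair $L_T^4L_x^4\times L_T^{4/3}L_x^4$ to peel off $||\grad u_1-\grad u_2||_{L_T^4L_x^4}$, which is controlled by $||u_1-u_2||_{X_T}$. The polynomial piece $|u_j|^2$ gets bounded in $L_T^{4/3}L_x^4$ using $|| \cdot ||_{L_T^{8/3}L_x^8}^2 \lesssim T^{3/4}||u_j||_{L_T^\infty L_x^8}^2 \lesssim T^{3/4}||u_j||_{X_T}^2$ via Sobolev embedding, producing the $T^{3/4}(||u_j||_{X_T}+||u_j||_{X_T}^2)$ family. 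The exponential piece $|u_j|(e^{4\pi(1+\ep)|u_j|^2}-1)$ is split by a further H\"older: $||u_j||_{L_T^\infty L_x^{1/\delta}}\cdot ||e^{4\pi(1+\ep)|u_j|^2}-1||_{L_T^{4/3}L_x^{4/(1-4\delta)}}$. I pick $\delta=\delta(A)>0$ small enough that the dual-exponent condition $A^2<\tfrac{3}{3+4\delta}=(4/(1-4\delta))'/(4/3)$ of Theorem \ref{LMTS} holds, then apply that theorem to the exponential factor with $p=4/3$, $q=4/(1-4\delta)$, producing an $\ep_0$, a $\gamma\in(0,3)$, and the claimed power-of-$T$ bound $T^{(3-\gamma)/4}(T^{1/4}+||u_j||_{X_T})^\gamma$, while Sobolev embedding $H_x^{1,4}\hookrightarrow L_x^{1/\delta}$ handles the $L_T^\infty L_x^{1/\delta}$ factor by $||u_j||_{X_T}^{(1-4\delta)/2}$ after interpolating with $||u_j||_{L_T^\infty L_x^2}$ bounded by $M$.

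For $(II)+(IV)$ I would again peel off $||\grad u_1||_{L_T^4L_x^4}\lesssim ||u_1||_{X_T}$ by H\"older, then use the difference form of \eqref{F1derivdiff2} to bound the remaining factor by $||\,|u_1-u_2|\sum_j(|u_j|+e^{4\pi(1+\ep)|u_j|^2}-1)\,||_{L_T^{4/3}L_x^4}$. The polynomial piece gives $T^{3/4}||u_1-u_2||_{L_T^\infty L_x^8}||u_j||_{L_T^\infty L_x^8}\lesssim T^{3/4}||u_1-u_2||_{X_T}||u_j||_{X_T}$ by Sobolev embedding. The exponential piece is controlled by the same H\"older trick, applying $L_T^\infty L_x^{1/\delta}$ to $|u_1-u_2|$ and Theorem \ref{LMTS} to the exponential factor with the same $\delta$, producing the $||u_j||_{X_T}^{(1-4\delta)/2}$ factor (absorbed together with the extra $||u_1||_{X_T}$ from peeling off $\grad u_1$ into the $||u_j||_{X_T}^{(3-4\delta)/2}$ term in the statement).

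The main obstacle is simply the bookkeeping: one must verify that a single choice of $\delta$ works for both $(I)+(III)$ and $(II)+(IV)$, that the resulting $\gamma$ from Theorem \ref{LMTS} lies strictly in $(0,3)$ (this is built into the conclusion of Theorem \ref{LMTS} since $4/q' = 1-4\delta < 1$ forces $\gamma<4/q'<4$, and by shrinking $\ep$ we can stay below $3$), and that all spatial norms really do embed into either the $L_t^\infty L_x^2$ bound by $M$ or the $L_t^4 H_x^{1,4}$ piece of $X_T$. No new ideas beyond those already deployed in Corollary \ref{NLH1} are needed; everything is handled uniformly by a single application of Theorem \ref{LMTS} after choosing $\delta(A)$ small once and for all.
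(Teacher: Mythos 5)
Your proposal follows essentially the same route as the paper: the identical Wirtinger-derivative decomposition into $(I)$--$(IV)$, the same H\"older peeling of $\|\grad u_1-\grad u_2\|_{L_T^4L_x^4}$ (resp.\ $\|\grad u_1\|_{L_T^4L_x^4}$), the same split into a polynomial piece (handled by H\"older in time and Sobolev embedding, yielding the $T^{3/4}$ terms) and an exponential piece (handled by $\|\cdot\|_{L_T^\infty L_x^{1/\delta}}$ times Theorem \ref{LMTS} with $p=\tfrac{4}{3}$, $q=\tfrac{4}{1-4\delta}$ after choosing $\delta(A)$ so that $A^2<\tfrac{3}{3+4\delta}$). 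The only slip is your citation of $H_x^{1,4}\hookrightarrow L_x^{1/\delta}$ for the $L_T^\infty L_x^{1/\delta}$ factor: since $X_T$ controls $H_x^{1,4}$ only in $L_T^4$, this factor must instead be bounded through the $L_T^\infty H_x^1$ component via \eqref{sob}, exactly as the paper does; with that correction the argument matches the paper's proof.
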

\section{Well-Posedness Revisited}
In this section we revisit the well-posedness theory for $\eqref{PDE}$ and $\eqref{PDE2}$ first proved in \cite{MR2568809} using the techniques developed in the previous section.  We prove 
\begin{theorem}[\cite{MR2568809}] Suppose $u_0\in H_x^1(\R^2)$ and $||u_0||_{\dot{H}_x^1}<1$. Then there exists $T>0$ and a strong solution to $\eqref{PDE}$ and $\eqref{PDE2}$ in $C([-T,T], H_x^1(\R^2))$. 
\begin{proof}We give a proof by contraction mapping. For $T>0$ let \begin{align*}X_T&:=C([-T,T], H_x^1(\R^2))\cap L_x^{4}([-T,T], H_x^{1,4}(\R^2))\end{align*} and note that $(X_T,||\cdot ||_{X_T})$ is a Banach space. Thus, for $T,\ep>0$ \begin{align*}B(T,\ep)&=\{u\in X_T: ||u-e^{it\Delta}u_0||_{X_T}\le \ep\}\end{align*} is a complete metric space. For what follows, we assume that $0<\ep<1-||\grad u_0||_{L_x^2}$. For $u\in B(T,\ep)$ define \[\Phi_i(u)=e^{it\Delta}u_0-i\int_{0}^{t}e^{i(t-s)\Delta}F_i(u(s))\ ds.\] It suffices to prove that $\Phi_i$ is a self-map and has a fixed point. 

We first prove that $\Phi_i$ is a self-map. To this end, we first note that since the $\dot{H}_x^1$ is a conserved quantity for the free Schr\"odinger equation it follows that  \[||u||_{L_t^\infty \dot{H}_x^1}\le ||e^{it\Delta}u_0||_{L_t^\infty \dot{H}_x^1}+||u-e^{it\Delta}u_0||_{L_t^\infty \dot{H}_x^1}\le ||u_0||_{\dot{H}_x^1}+\ep<1\] for any $u\in B(T,\delta)$. Similarly, if $u\in B(T,\ep)$ then \[||u||_{X_T}\le \ep+C_{ST}||u_0||_{H_x^1},\] where $C_{ST}$ is the constant appearing in the Strichartz inequality. 
Thus, by virtue of the Strichartz Inequality and Corollaries $\ref{NLH1}$ and $\ref{NLH2}$ there exist $0<\gamma<3$ and $\delta>0$ so that \begin{align*}||\Phi_i(u)-e^{it\Delta}u_0||_{X_T}\lesssim ||F_i(u)||_{L_T^1H_x^1}&\lesssim T^{\frac{3-\gamma}{4}}(||u||_{X_T}^{\frac{3}{2}}+||u||_{X_T}^{\frac{3-4\delta}{2}}+||u||_{X_T}^{\frac{5-4\delta}{2}})(T^{\frac{1}{4}}+||u||_{X_T})^{\gamma}\\ &+T^{\frac{3}{4}}(||u||_{X_T}^2+||u||_{X_T}^3). \\ &\lesssim C(||u_0||_{H_x^1})(T^{\frac{3}{4}}+T^{\frac{3-\gamma}{4}}).\end{align*}Thus, provided $T=T(\ep)>0$ is sufficiently small we obtain that \[||\Phi_i(u)-e^{it\Delta}u_0||_{X_T}\le \ep,\] and so we obtain that $\Phi_i$ is a self-map.

We now turn to proving that $\Phi_i$ is a contraction.  Again, from Corollaries $\ref{NLH1}$ and $\ref{NLH2}$ we obtain that if $u_1,u_2\in X(T)$ then \begin{align*}||\Phi_i(u_1)-\Phi_i(u_2)||_{X_T}&\lesssim ||F_i(u_1)-F_i(u_2)||_{L_T^1 H_{x}^1}\\ &\lesssim ||u_1-u_2||_{X_T}C(||u_0||_{H_x^1})(T^{\frac{3}{4}}+T^{\frac{3-\gamma}{4}}).\end{align*} Thus, we may find $T=T(\ep)>0$ so that \[||F_i(u_1)-F_i(u_2)||_{L_T^1H_x^1}<\frac{1}{2}||u_1-u_2||_{X_T}\]
Thus, for $T=T(||u_0||_{H_x^1})>0$ sufficiently small we deduce that $\Phi_i$ is a contraction, as desired. \end{proof}
\end{theorem}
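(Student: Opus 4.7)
The plan is to construct the solution via a Banach fixed point argument on a suitable closed ball in $X_T$, exploiting the local Moser--Trudinger--Strichartz machinery developed in Corollaries~\ref{NLH1} and~\ref{NLH2}. Fix $\epsilon \in (0, 1 - \|u_0\|_{\dot H^1_x})$ and consider, for $T>0$ to be chosen,
\[B(T,\epsilon) := \{u \in X_T : \|u - e^{it\Delta}u_0\|_{X_T} \le \epsilon\},\]
which is complete in the $X_T$-metric. Define $\Phi_i(u)(t) := e^{it\Delta}u_0 - i\int_0^t e^{i(t-s)\Delta} F_i(u(s))\,ds$. The aim is to produce $T = T(\|u_0\|_{H^1_x}) > 0$ small enough that $\Phi_i$ is a contracting self-map of $B(T,\epsilon)$.

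First I would verify the hypothesis of Corollaries~\ref{NLH1} and~\ref{NLH2} on $B(T,\epsilon)$. Since the free evolution conserves $\dot{H}^1_x$ and the mass, for any $u \in B(T,\epsilon)$ we have $\|u\|_{L^\infty_t \dot H^1_x} \le \|u_0\|_{\dot H^1_x} + \epsilon < 1$, and, up to a Strichartz constant, $\|u\|_{X_T} \lesssim \epsilon + C_{\mathrm{ST}}\|u_0\|_{H^1_x}$. Thus we may take $A := \|u_0\|_{\dot H^1_x} + \epsilon < 1$ and $M := \|u_0\|_{L^2_x}^2$ as the parameters in those corollaries, and the mass stays bounded uniformly.

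Next, I would bound $\|\Phi_i(u) - e^{it\Delta}u_0\|_{X_T}$. By the inhomogeneous Strichartz estimate this is controlled by $\|F_i(u)\|_{L^1_T H^1_x}$. Applying Corollary~\ref{NLH1} with $u_2 = 0$ bounds $\|F_i(u)\|_{L^1_T L^2_x}$, and Corollary~\ref{NLH2} with $u_2=0$ bounds $\|F_i(u)\|_{L^1_T \dot H^1_x}$; crucially, both produce strictly positive powers $T^{(3-\gamma)/4}$ and $T^{3/4}$ with $\gamma < 3$. The net estimate then takes the form
\[\|\Phi_i(u) - e^{it\Delta}u_0\|_{X_T} \le C(\|u_0\|_{H^1_x})\bigl(T^{3/4} + T^{(3-\gamma)/4}\bigr),\]
so choosing $T$ small in terms of $\epsilon$ and $\|u_0\|_{H^1_x}$ makes the right side $\le \epsilon$, giving the self-map property.

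The contraction property is proven the same way: applying Corollaries~\ref{NLH1} and~\ref{NLH2} directly to the difference $F_i(u_1) - F_i(u_2)$ for $u_1, u_2 \in B(T,\epsilon)$, we obtain
\[\|\Phi_i(u_1) - \Phi_i(u_2)\|_{X_T} \le C(\|u_0\|_{H^1_x})\bigl(T^{3/4} + T^{(3-\gamma)/4}\bigr)\|u_1 - u_2\|_{X_T},\]
so shrinking $T$ further ensures the prefactor is $\le \tfrac12$. Banach's fixed point theorem then produces the unique fixed point in $B(T,\epsilon)$, which is the desired strong solution in $C([-T,T], H^1_x)$. The main obstacle, already surmounted in the preliminary work of the previous section, is the delicate control of the exponential nonlinearity in a subcritical Strichartz norm: this is precisely what the local Moser--Trudinger--Strichartz inequality of Theorem~\ref{LMTS} provides, and once Corollaries~\ref{NLH1} and~\ref{NLH2} are in hand the well-posedness argument reduces to the standard contraction scheme.
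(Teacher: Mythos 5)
Your proposal is correct and follows essentially the same route as the paper: a contraction mapping argument on the ball $B(T,\epsilon)$ in $X_T$, with the smallness of $\|u\|_{L_t^\infty \dot H_x^1}$ ensured by $\epsilon < 1 - \|u_0\|_{\dot H_x^1}$, and with the nonlinear estimates of Corollaries~\ref{NLH1} and~\ref{NLH2} supplying the positive powers of $T$ that yield both the self-map and contraction properties. No substantive differences from the paper's argument.
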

\begin{remark}The observant reader will note that the above proof proves uniqueness only in the space $X_T$, and not in the larger space $C([-T,T],H_x^1(\R^2))$. Uniqueness in $C([-T,T],H_x^1(\R^2))$ is a consequence of the techniques of the previous section, but relies heavily on the convexity of the exponential.  For details, see \cite{MR2568809}. 
\end{remark}
For completeness and the sake of exposition, we include a proof of global well-posedness. As we will see, the local theory guarantees us that we may continue a strong solution on $[0,T)$ past time $T$ provided the kinetic energy does not concentrate. Our strategy, then, is to quantify the speed at which the kinetic energy can concentrate. This is essentially a localization result due to Bourgain \cite{MR1655835} (see also Lemma 6.2 in \cite{MR1726753}). 

\begin{lemma}\label{conspeed}Let $u$ be a non-trivial solution of $\eqref{PDE}$ or $\eqref{PDE2}$ on $[0,T)$ with $0<T\le \infty$.  If $u$ solves $\eqref{PDE}$, then there exists a positive constant $C=C(M)$ so that \begin{align}
\frac{1}{\cyc{t}^2}\le C(M)(H_1(u_0)-||\grad u(t)||_{L_x^2}^2),
\end{align} for every $0<t<T$. If $u$ instead solves $\eqref{PDE2}$, then there exists a positive constant $C=C(M)$ so that \begin{align}
\frac{1}{\cyc{t}^4}\le C(M)(H_2(u_0)-||\grad u(t)||_{L_x^2}^2),
\end{align}for every $0<t<T$. 

\begin{proof}Find $R>1$ so that \begin{align}\label{Rchoice}\int_{\{|x|\le R\}}|u_0(x)|^2\ dx\ge \tfrac{1}{2}||u_0||_{L_x^2(\R^2)}^2.\end{align} 

Let $\phi:\R\to [0,1]$ be a smooth function satisfying $\phi(x)=1$ for $x\le 0$ and $\phi(x)=0$ for $x\ge 1$. Define \[\xi(x):=\phi\left(\tfrac{\text{dist}(x,B(R))}{R'}\right),\] where $\text{dist}(x,B(R))=\max\{|x|-R,0\}$ is the distance from $x$ to the ball $\{|x|\le R\}$. 

We note that $\xi(x)=1$ when $|x|\le R$, that $\xi(x)=0$ when $|x|\ge R+R'$, and that $||\grad \xi||_{L_x^\infty}\lesssim 1/R'$. This implies that \begin{align*}\int_{\{|x|\le R+R'\}}|u(t,x)|^2\ dx&\ge \int_{\R^2} |\xi(x)|^2|u(t,x)|^2\ dx,\qquad\text{and }\\\int_{\{|x|\le R\}}|u_0(x)|^2\ dx&\le \int_{\R^2} |\xi(x)|^2|u_0(x)|^2\ dx.\end{align*}By taking their difference and employing the fundamental theorem of calculus and Fubini-Tonelli, we see that
\begin{align*}\int_{\{|x|\le R+R'\}}|u(t,x)|^2\ dx-\int_{\{|x|\le R\}}|u_0(x)|^2 dx&\ge \int_{\R^2} |\xi(x)|^2(|u(t,x)|^2-|u_0(x)|^2)\ dx\\ &=\int_{\R^2}\int_{0}^{t} |\xi(x)|^2\partial_s|u(s,x)|^2\ ds\, dx\\ &=\int_{0}^{t}\int_{\R^2}|\xi(x)|^2\partial_{s}|u(s,x)|^2\ dx ds.
\end{align*}Since $u$ satisfies $\eqref{PDE}$ or $\eqref{PDE2}$, it follows that  \begin{align*}|\int_{\R^2}|\xi(x)|^2\partial_{s}|u(s,x)|^2|=|4\text{Im} \int_{\R^2}\xi(x) \grad \xi(x)\cdot \grad u(s,x) \bar{u}(s,x)|\ dx\le \tfrac{C(M)}{R'}.\end{align*} Here we used the fact that $||\grad u(t)||_{L_x^2}^2\le 1$. We deduce that  \begin{align}\label{NakanishiBound}\int_{\{|x|\le R+R'\}}|u(t,x)|^2\ dx-\int_{\{|x|\le R\}}|u_0(x)|^2 dx\ge -\tfrac{C(M)t}{R'}.\end{align} 

By expanding $e^{4\pi x^2}$ into its power series, we see that \begin{align}\label{ptmay1} 8\pi^2|u(t,x)|^4&\le e^{4\pi |u(t,x)|^2}-4\pi |u(t,x)|^2-1,\qquad \text{and }\\ \label{ptmay2} \tfrac{32\pi^3}{3}|u(t,x)|^6&\le e^{4\pi |u(t,x)|^2}-4\pi |u(t,x)|^2-8\pi^2|u(t,x)|^4-1.\end{align} This implies that \begin{align}\label{hamilbound1}
||u(t)||_{L_x^4}^{4}&\le \tfrac{1}{2\pi}(H_1(u(t))-||\grad u(t)||_{L_x^2}^2),\qquad \text{and }\\  \label{hamilbound2}||u(t)||_{L_x^6}^{6}&\le \tfrac{3}{8\pi^2}(H_2(u(t))-||\grad u(t)||_{L_x^2}^2).
\end{align}

If $u$ satisfies $\eqref{PDE}$, then by $\eqref{Rchoice}$, $\eqref{NakanishiBound}$ and $\eqref{hamilbound1}$  it follows from Cauchy-Schwarz that \begin{align*}
\tfrac{M}{2}-\tfrac{C(M)t}{R'}\le (\pi(R+R')^2)^{\frac{1}{2}}||u(t)||_{L_x^4}^{2}\lesssim (R+R')(H_1(u(t))-||\grad u(t)||_{L_x^2}^2)^\frac{1}{2}.\end{align*} Choosing $R'=\tfrac{4C(M)}{M}t$, we see from the conservation of energy that \[\tfrac{1}{\cyc{t}^2}\lesssim_{M} H_1(u_0)-||\grad u(t)||_{L_x^2}^2.\] Similarly, if $u$ satisfies $\eqref{PDE2}$, then by $\eqref{Rchoice}$, $\eqref{NakanishiBound}$ and $\eqref{hamilbound2}$ we have that
\begin{align*}
\tfrac{M}{2}-\tfrac{C(M)t}{R'}\le (\pi(R+R')^2)^{\frac{2}{3}}||u(t)||_{L_x^6}^{2}\lesssim (R+R')^\frac{4}{3}(H_2(u(t))-||\grad u(t)||_{L_x^2}^2)^\frac{1}{3}.
\end{align*} Again, choosing $R'=\tfrac{4C(M)}{M}t$ we see that \[\tfrac{1}{\cyc{t}^4}\lesssim_{M}H_2(u_0)-||\grad u(t)||_{L_x^2}^2.\]

\end{proof}
\end{lemma}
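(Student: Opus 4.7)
The plan is to follow Bourgain's well-known mass-localization idea: pick a ball $B(R)$ containing essentially all of the mass at time $0$, quantify how quickly mass can leak out, and then combine a mass lower bound on a larger ball with a Hölder-type estimate to force the $L_x^p$ norm (for an appropriate $p$) to be bounded below. Finally, exploit the fact that Taylor-expanding $e^{4\pi|u|^2}$ expresses the potential-energy deficit $H_i(u_0) - \|\nabla u(t)\|_{L_x^2}^2$ as an upper bound on $\|u(t)\|_{L_x^p}^p$ for $p=4$ (resp. $p=6$), which in reverse turns the mass lower bound into the desired decay estimate.

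First I would choose $R \gtrsim 1$ large enough that the ball $\{|x|\le R\}$ captures at least half the mass at $t=0$. I would then build a smooth radial cutoff $\xi$ equal to $1$ on $B(R)$, supported in $B(R+R')$, with $\|\nabla \xi\|_{L_x^\infty} \lesssim 1/R'$, where $R' > 0$ is a free parameter to be optimized in $t$. Using the NLS equation (the nonlinearity here is real so it drops out of the identity for $\partial_t|u|^2$), one obtains
\begin{equation*}
\partial_t \int_{\mathbb R^2} \xi^2 |u(t,x)|^2\,dx = 4\,\mathrm{Im}\int_{\mathbb R^2} \xi\,\nabla\xi \cdot \nabla u\,\bar u\,dx,
\end{equation*}
and Cauchy--Schwarz together with $\|\nabla u\|_{L_x^2}\le 1$ and $\|u\|_{L_x^2}^2 = M$ bounds this in absolute value by $C(M)/R'$. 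Integrating in time yields
\begin{equation*}
\int_{|x|\le R+R'} |u(t,x)|^2\,dx \;\ge\; \tfrac{M}{2} - \tfrac{C(M)\,t}{R'}.
\end{equation*}

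Next, I would estimate the left side from above by Hölder on the ball of radius $R+R'$. For the cubic/exponential problem \eqref{PDE} the natural choice is $L_x^4$, giving $\int_{B_{R+R'}}|u|^2 \lesssim (R+R')\,\|u(t)\|_{L_x^4}^2$; for \eqref{PDE2} the $L_x^6$ norm is natural, giving $\int_{B_{R+R'}}|u|^2 \lesssim (R+R')^{4/3}\|u(t)\|_{L_x^6}^2$. The asymmetry in the Hölder exponents is exactly what will produce $\langle t\rangle^{-2}$ versus $\langle t\rangle^{-4}$. Setting $R' \sim t$ (with a multiplicative constant tuned so the subtracted term is at most $M/4$) and squaring the resulting inequality, I would obtain
\begin{equation*}
\frac{1}{\langle t\rangle^2} \lesssim_M \|u(t)\|_{L_x^4}^4 \quad\text{or}\quad \frac{1}{\langle t\rangle^4} \lesssim_M \|u(t)\|_{L_x^6}^6,
\end{equation*}
depending on whether $u$ solves \eqref{PDE} or \eqref{PDE2}.

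Finally, I would close the argument by exploiting the power-series tail inequalities
\begin{equation*}
8\pi^2 |u|^4 \le e^{4\pi |u|^2} - 4\pi|u|^2 - 1,\qquad \tfrac{32\pi^3}{3}|u|^6 \le e^{4\pi |u|^2} - 4\pi|u|^2 - 8\pi^2|u|^4 - 1,
\end{equation*}
which integrate to $\|u(t)\|_{L_x^4}^4 \lesssim H_1(u(t)) - \|\nabla u(t)\|_{L_x^2}^2$ and $\|u(t)\|_{L_x^6}^6 \lesssim H_2(u(t)) - \|\nabla u(t)\|_{L_x^2}^2$. Conservation of the respective Hamiltonians then replaces $H_i(u(t))$ with $H_i(u_0)$ and yields the claimed estimates. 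The main obstacle in carrying this out is the bookkeeping that produces precisely the exponents $\langle t\rangle^{-2}$ vs.\ $\langle t\rangle^{-4}$: one must choose the right Lebesgue exponent ($4$ vs.\ $6$) and handle the Hölder loss on a ball of radius $R+R' \sim t$ carefully, matching the rate at which the cutoff-mass identity forces leakage against the strength of the Taylor tail that the conserved energy controls.
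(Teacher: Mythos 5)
Your proposal is correct and follows essentially the same argument as the paper: the Bourgain-style mass localization with a cutoff $\xi$, the flux identity bounded by $C(M)/R'$ using $\|\nabla u(t)\|_{L_x^2}\le 1$, H\"older with $L_x^4$ (resp.\ $L_x^6$) on the ball of radius $R+R'\sim t$, and the Taylor-tail inequalities together with conservation of $H_1$ (resp.\ $H_2$) to convert the mass lower bound into the $\langle t\rangle^{-2}$ (resp.\ $\langle t\rangle^{-4}$) decay of the energy deficit. No gaps to flag.
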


\begin{theorem}[\cite{MR2568809}] \label{GWP}If $u_0\in H_x^1$ then there exists a unique $C(\R,H_x^1(\R^2))$ solution to $\eqref{PDE}$ and $\eqref{PDE2}$ provided $H_1(u_0)\le 1$ and $H_2(u_0)\le 1$, respectively. 
\begin{proof} We only consider positive times, as the following argument can be repeated for negative times with no change. Suppose that $u$ is a strong solution to $\eqref{PDE}$ on some maximal interval $[0,T^*)$ with $T^*<\infty$. Suppose $t_n$ is a sequence of times with $t_n\nearrow T^*$. Passing to a subsequence, we may assume that $||\grad u(t_n)||_{L_x^2(\R^2)}\to L\le H_i(u_0)$. If $u$ solves $\eqref{PDE}$, then by Lemma \ref{conspeed} we know that \[\tfrac{1}{\cyc{t_n}^2}\le C(M)(H_1(u_0)-||\grad u(t_n)||_{L_x^2}^2).\] We see then that\[\lim_{n\to \infty}||\grad u(t_n)||_{L_x^2}^2=L<H_1(u_0)\le 1.\] The local theory guarantees the existence of a local solution with initial data $u(t_n)$ with a lifetime at least $\tau=\tau(L)>0$. But then for $n$ satisfying $T^*-t_n<\tau$ we may continue $u$ past $T^*$, producing a contradiction. A similar argument reaches the same conclusion if $u$ instead solves $\eqref{PDE2}$. 
\end{proof}
\end{theorem}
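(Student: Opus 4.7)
My plan is to argue by contradiction using a maximality argument combined with the quantitative kinetic-energy bound of Lemma \ref{conspeed}. Fix $u_0\in H_x^1$ with $H_i(u_0)\le 1$, and let $u$ denote the maximal forward-in-time strong solution on $[0,T^*)$ produced by the local well-posedness theorem (the argument for negative times is identical). Suppose, for contradiction, that $T^*<\infty$.

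The key observation is that the local theory produces a lifespan depending only on $\|u_0\|_{\dot H_x^1}$ as long as this quantity is strictly less than $1$; the mass is preserved throughout, so the only obstruction to continuation is concentration of kinetic energy up to the critical threshold $\|\grad u(t)\|_{L_x^2}^2\to 1$. Pick any sequence $t_n\nearrow T^*$. Since $\|\grad u(t_n)\|_{L_x^2}^2\le H_i(u_0)\le 1$ by energy conservation, we may pass to a subsequence so that $\|\grad u(t_n)\|_{L_x^2}^2\to L$ for some $L\in[0,1]$. Apply Lemma \ref{conspeed}: in the \eqref{PDE} case one has
\begin{equation*}
\tfrac{1}{\langle t_n\rangle^2}\le C(M)\bigl(H_1(u_0)-\|\grad u(t_n)\|_{L_x^2}^2\bigr),
\end{equation*}
and analogously with $\langle t_n\rangle^4$ for \eqref{PDE2}. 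Since $T^*<\infty$, the left-hand side is bounded below by $\tfrac{1}{\langle T^*\rangle^2}>0$, so passing to the limit yields $L\le H_i(u_0)-\tfrac{1}{C(M)\langle T^*\rangle^2}$; in particular $L<H_i(u_0)\le 1$, so $L<1$ strictly.

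Now choose any $L'\in(L,1)$. For all sufficiently large $n$ we have $\|u(t_n)\|_{\dot H_x^1}\le L'<1$, while $\|u(t_n)\|_{L_x^2}^2=M$ by mass conservation. The local well-posedness theorem (applied with initial data $u(t_n)$ at time $t_n$) produces a strong $H_x^1$ solution on a time interval of length at least $\tau=\tau(L',M)>0$ that is independent of $n$. For $n$ large enough that $T^*-t_n<\tau$, uniqueness lets us glue this local solution to $u$, producing a strong solution on an interval strictly containing $T^*$. This contradicts the maximality of $T^*$, so $T^*=\infty$, and the solution is global.

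The only delicate point is ensuring the strict inequality $L<1$ even in the borderline case $H_i(u_0)=1$: this is precisely what Lemma \ref{conspeed} supplies, since it forces the deficit $H_i(u_0)-\|\grad u(t)\|_{L_x^2}^2$ to be bounded below by a positive multiple of $\langle t\rangle^{-2}$ (respectively $\langle t\rangle^{-4}$), and finiteness of $T^*$ then keeps this deficit uniformly positive along $t_n$. Uniqueness in the class $C_t(H_x^1)$ is inherited from the local theorem (via the uniqueness remark following it), so no additional work is needed there.
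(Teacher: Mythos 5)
Your proposal is correct and follows essentially the same route as the paper: a contradiction argument at a finite maximal time $T^*$, using Lemma \ref{conspeed} to force the kinetic-energy limit $L$ strictly below $H_i(u_0)\le 1$, and then invoking the local theory (whose lifespan depends only on quantities now bounded strictly below the critical threshold) to continue the solution past $T^*$. The extra details you supply (the uniform deficit $\tfrac{1}{C(M)\langle T^*\rangle^2}$, the intermediate $L'\in(L,1)$, and the gluing via uniqueness) merely flesh out steps the paper leaves implicit.
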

\section{Perturbation Theory}
\subsection{Stability Theory}
In this section we derive the perturbation theory essential to the proof of Theorem \ref{perturbtheorem}. In the energy-subcritical setting, we will see that we may treat $\eqref{PDE2}$ as a perturbation of the mass-critical NLS. To do so, we must first understand how approximate solutions to the mass-critical NLS behave. 

\begin{lemma}[Short-Time Mass-Critical Perturbations] \label{ST}Let $I$ be a compact interval and let $\tilde{v}$ be an approximate solution to $\eqref{mcnls}$ in the sense that \[(i\partial_t+\Delta)\tilde{v}=|\tilde v|^2v+e\]for some spacetime function $e$. Assume that
    \begin{align}||\tilde{v}||_{L_t^\infty H_x^1(I\times \R^2)}\le M \label{ST1}\end{align}
\noindent for some positive constant $M$. Let $t_0\in I$ and let $v(t_0)$ be close to $\tilde{v}(t_0)$ in the sense that
    \begin{align}
        ||v(t_0)-\tilde{v}(t_0)||_{H_x^1}\le M' \label{ST2},\end{align}
    for some $M'>0$. Assume also the smallness conditions

    \begin{align}
        ||\tilde{v}||_{L_t^4H_x^{1,4}}&\le \epsilon_0,\label{ST3}\\
        ||e^{i(t-t_0)\Delta}(v(t_0)-\tilde{v}(t_0))||_{L_t^4H_x^{1,4}}&\le \epsilon,\qquad \text{and}\label{ST4}\\
        ||e||_{N(I\times \R^2)}&\le \epsilon \label{ST5} \end{align}
\noindent for some $0<\epsilon\le \epsilon_0$, where $\epsilon_0=\epsilon_0(M,M')>0$ is a small constant. Then, there exists a solution $v$ to \eqref{mcnls} on $I\times \R^2$ with initial data $v(t_0)$ at time $t=t_0$ satisfying
    \begin{align}
        ||v-\tilde{v}||_{L_t^4H_x^{1,4}}&\lesssim \epsilon, \label{ST6}\\
        ||v-\tilde{v}||_{S(I\times \R^2)}&\lesssim M', \label{ST7}\\
        ||v||_{S(I\times \R^2)}&\lesssim M+M', \qquad \text{and}\label{ST8}\\
        ||(i\partial_t+\Delta)(v-\tilde{v})+e||_{N(I\times \R^2)}&\lesssim \epsilon \label{ST9}.\end{align}\end{lemma}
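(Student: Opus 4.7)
The plan is to construct the difference $w := v - \tilde v$ via a contraction mapping and then set $v := \tilde v + w$. Writing $g(u) := |u|^2 u$, the function $w$ must satisfy the Duhamel equation
\begin{align*}
w(t) = e^{i(t-t_0)\Delta}\bigl(v(t_0) - \tilde v(t_0)\bigr) - i\int_{t_0}^{t} e^{i(t-s)\Delta}\bigl[g(\tilde v + w) - g(\tilde v) - e\bigr](s)\,ds.
\end{align*}
I would work in the complete metric space
\begin{align*}
\mathcal{B} := \bigl\{w \in S(I\times \R^2) : \|w\|_{L_t^4 H_x^{1,4}} \le A\epsilon,\ \|w\|_{S(I\times \R^2)} \le AM'\bigr\},
\end{align*}
equipped with the metric $\|w_1 - w_2\|_{L_t^4 H_x^{1,4}}$, for a suitably large absolute constant $A$.

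The key ingredient is a bound on the nonlinear difference $g(\tilde v + w) - g(\tilde v)$ in $N(I\times \R^2)$. Combining the pointwise estimates \eqref{cubicdiff} and \eqref{cubicderivdiff} with the chain rule $\nabla g(u) = (\partial_z g)(u)\nabla u + (\partial_{\bar z} g)(u)\overline{\nabla u}$ gives the schematic bounds
\begin{align*}
|g(v) - g(\tilde v)| &\lesssim |w|\bigl(|w|^2 + |\tilde v|^2\bigr),\\
|\nabla(g(v) - g(\tilde v))| &\lesssim |\nabla w|\bigl(|w|^2 + |\tilde v|^2\bigr) + |w|\bigl(|w|+|\tilde v|\bigr)|\nabla \tilde v|.
\end{align*}
Since $(4,4)$ is Schr\"odinger-admissible in two dimensions, H\"older's inequality in $L_t^{4/3} L_x^{4/3}$ produces the master estimate
\begin{align*}
\|g(\tilde v + w) - g(\tilde v)\|_{N(I\times \R^2)} \lesssim \|w\|_{L_t^4 H_x^{1,4}}\bigl(\|w\|_{L_t^4 H_x^{1,4}} + \|\tilde v\|_{L_t^4 H_x^{1,4}}\bigr)^2.
\end{align*}
Applying Strichartz to the Duhamel formula and invoking \eqref{ST3}, \eqref{ST4}, and \eqref{ST5} then produces an inequality of the form
\begin{align*}
\|w\|_{L_t^4 H_x^{1,4}} \lesssim \epsilon + \|w\|_{L_t^4 H_x^{1,4}}\bigl(\|w\|_{L_t^4 H_x^{1,4}} + \epsilon_0\bigr)^2,
\end{align*}
together with an analogous estimate for $\|w\|_{S(I\times \R^2)}$ in which $M'$ replaces $\epsilon$. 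Taking $\epsilon_0 = \epsilon_0(M, M')$ small enough lets me conclude both that the Duhamel map sends $\mathcal{B}$ to itself and that it is a contraction on $\mathcal{B}$ (the Lipschitz estimate is proved by an identical computation applied to $g(\tilde v + w_1) - g(\tilde v + w_2)$).

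Existence of a unique fixed point $w \in \mathcal{B}$ yields a strong solution $v := \tilde v + w$ of \eqref{mcnls} on $I\times \R^2$ satisfying \eqref{ST6} and \eqref{ST7} by construction. The bound \eqref{ST8} follows from the triangle inequality once I note that $\|\tilde v\|_{S(I\times \R^2)} \lesssim M$, which I would prove by applying Strichartz directly to $\tilde v$ using \eqref{ST1}, \eqref{ST3}, and \eqref{ST5}. Finally, \eqref{ST9} is immediate from the master nonlinear estimate, since $(i\partial_t + \Delta)(v - \tilde v) + e = g(v) - g(\tilde v)$ and the right-hand side has just been bounded by a constant multiple of $\epsilon$. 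The main subtlety will be the bootstrap step: the cubic nonlinearity must be absorbed into the linear side of the Strichartz inequality, which is precisely what requires the smallness of $\|\tilde v\|_{L_t^4 H_x^{1,4}}$ in \eqref{ST3} and is the reason this result is the \emph{short-time} version. In any longer-time stability statement one would partition $I$ into subintervals on which $\|\tilde v\|_{L_t^4 H_x^{1,4}}$ is small and iterate; here no subdivision is necessary.
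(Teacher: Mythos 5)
Your proposal is correct, and the core estimates coincide with the paper's: the pointwise bounds \eqref{cubicdiff}--\eqref{cubicderivdiff}, H\"older in the dual pair $L_t^{4/3}L_x^{4/3}$, Strichartz, and smallness of $\epsilon_0$ to close. The structural difference is how existence and the bounds are obtained. The paper does not construct $v$; it takes the solution as given (implicitly from the mass-critical global theory together with persistence of regularity), sets $z=v-\tilde v$, introduces the quantity $S(t)=\|(i\partial_t+\Delta)z+e\|_{N([t_0,t])}$, and closes a continuity (bootstrap) argument in $t$: $S(t)\lesssim (S(t)+\epsilon)^3+\epsilon_0^2(S(t)+\epsilon)^2+\epsilon_0(S(t)+\epsilon)$ forces $S(t)\le\epsilon$, after which \eqref{ST9}, \eqref{ST6}, \eqref{ST7}, \eqref{ST8} follow by Strichartz. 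You instead build $w=v-\tilde v$ by contraction in the ball $\mathcal B$, which has the advantage of delivering the existence claim of the lemma directly, without appealing to Theorem \ref{dodson}; the price is the (standard, but worth stating) verification that $\mathcal B$ is complete under the weaker metric $\|\cdot\|_{L_t^4H_x^{1,4}}$, i.e.\ that the $S$-norm bound survives passage to the limit by a Fatou/weak-compactness argument. Two small points to tidy up: the constant $A$ must be fixed as an absolute constant before $\epsilon_0$ is chosen (your self-map estimate in the $S$-component gives $CM'+(C+1)\epsilon$, so as in the paper one should either take $\epsilon_0\le M'$ — legitimate since $\epsilon_0$ is allowed to depend on $M'$ — or state \eqref{ST7} as $\lesssim M'+\epsilon$); and for \eqref{ST8} your route via the triangle inequality needs the auxiliary bound $\|\tilde v\|_{S(I\times\R^2)}\lesssim M$, which you correctly note follows from Strichartz applied to $\tilde v$ with \eqref{ST1}, \eqref{ST3}, \eqref{ST5}, essentially the same computation the paper performs for $v$ itself.
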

\begin{remark}
Note that by Strichartz, hypothesis $\eqref{ST4}$ is redundant if $M'=O(\epsilon)$.
\end{remark}
\begin{proof}
    By time symmetry we may, and do, assume that $t_0=\inf I$. Let $z:=v-\tilde{v}$. Then $z$ satisfies \[\left\{\begin{array}{ll}(i\partial_t+\Delta)z=|\tilde{v}+z|^2(\tilde{v}+z)-|\tilde{v}|^2\tilde{v}-e\\ z(t_0)=v(t_0)-\tilde{v}(t_0)\end{array}  \right.\] For $t\in I$ define \[S(t):=||(i\partial_t +\Delta)z+e||_{{N}([t_0,t]\times \R^2)}=|||\tilde{v}+z|^2(\tilde{v}+z)-|\tilde{v}|^2\tilde{v}||_{{N}([t_0,t]\times \R^2)}.\] We will use the pointwise estimates $\eqref{cubicdiff}$ and $\eqref{cubicderivdiff}$ to estimate the $N^0$ and $N^1$ norms respectively.  Indeed, with $g(z)=z|z|^2$, we have
    \begin{align*}
    |g(\tilde{v}+z)-g(\tilde{v})|&\lesssim |z|(|z|^2+|\tilde{v}|^2)\\ 
    |\grad g(\tilde{v}+z)-\grad g(\tilde{v})|&\lesssim |\tilde{v}|^2|\grad z|+|z|^2|\grad \tilde{v}|+|z|^2|\grad z|+|\grad \tilde{v}||z||\tilde v|.
    \end{align*}
    Since $(\tfrac{4}{3},\tfrac{4}{3})$ is a dual Strichartz pair, we know for any three spacetime functions $f_1(t,x)$, $f_2(t,x)$, and $f_3(t,x)$ that \begin{align}\label{mastic}||f_1f_2f_3||_{N^0}\lesssim ||f_1f_2f_3||_{L_t^{\frac{4}{3}}L_x^{\frac{4}{3}}}\lesssim ||f_1||_{L_t^4L_x^4}||f_2||_{L_t^4L_x^4}||f_3||_{L_t^4L_x^4}.\end{align}Repeated applications of \eqref{mastic} implies, by ($\ref{ST3}$), that \begin{align}
        S(t)& \lesssim ||z||_{L_t^4H_{x}^{1,4}}^3+||z||_{L_t^4H_{x}^{1,4}}^2||v||_{L_t^4H_{x}^{1,4}}+||\tilde v||_{L_t^4L_t^4}^2||z||_{L_t^4H_x^{1,4}}\\ &\lesssim  ||z||_{L_t^4H_{x}^{1,4}}^3+\ep_0^2||z||_{L_t^4H_{x}^{1,4}}^2+\ep_0||z||_{L_t^4H_x^{1,4}}\end{align}
    On the other hand by Strichartz, ($\ref{ST4}$), and ($\ref{ST5}$), we have \begin{align}||z||_{L_t^4H_x^{1,4}}\lesssim ||e^{i(t-t_0)\Delta}z(t_0)||_{L_t^4 H_x^{1,4}}+S(t)+||e||_{N([t_0,t]\times \R^2)}\lesssim S(t)+\epsilon\label{ST11}.\end{align} So we have that \[S(t)\lesssim (S(t)+\ep)^3+\ep_0^2(S(t)+\ep)^2+\ep_0(S(t)+\ep).\]A continuity argument shows then that if $\epsilon_0$ is taken suffiiently small, then \[S(t)\le \epsilon\text{ for any }t\in I.\] This implies ($\ref{ST9}$). Using ($\ref{ST9}$) and
    ($\ref{ST11}$), one easily derives ($\ref{ST6}$). Moreover, by Strichartz, ($\ref{ST2}$), ($\ref{ST5}$), and ($\ref{ST9}$),
    \[||z||_{S(I\times \R^2)}\lesssim ||z(t_0)||_{H_x^1}+||(i\partial_t+\Delta)z+e||_{N(I\times \R^2)}+||e||_{N([t_0,t]\times\R^2)}\lesssim M'+\epsilon\] which proves ($\ref{ST7}$). To prove ($\ref{ST8}$), we use Strichartz, ($\ref{ST1}$), ($\ref{ST2}$), ($\ref{ST3}$), ($\ref{ST5}$), and ($\ref{ST9}$) we have\begin{align*}
        ||v||_{S}&\lesssim ||v(t_0)||_{H_x^1}+||(i\partial_t+\Delta)v||_{N}\\ &\lesssim ||\tilde{v}(t_0)||_{H_x^1}+||v(t_0)-\tilde{v}(t_0)||_{H_x^1}+||(i\partial_t+\Delta)(v-\tilde{v})+e||_{N}+||(i\partial_t+\Delta)\tilde{v}||_{N}+||e||_{N}\\ &\lesssim M+M'+\epsilon+||(i\partial_t+\Delta)\tilde{v}||_{ L_t^\frac{4}{3}L_x^\frac{4}{3}}\\ &\lesssim M+M'+||\tilde{v}||_{L_t^4L_x^4}^3\\ &\lesssim M+M'+\epsilon_0^3\\ &\lesssim M+M'.
    \end{align*}\end{proof}
    The following proposition is an expected consequence of the previous lemma. By the usual combinatorial argument, it suffices to prove the following proposition with $\eqref{SL2}$ replaced by $||\tilde{v}||_{L_t^4H_{x}^{1,4}}<\ep\le \ep_0$, with $\ep_0$ as in the previous lemma. 
\begin{proposition}\label{SL}
    Let $I$ be a compact interval and let $\tilde{v}$ be an approximate solution in the sense that \[(i\partial_t+\Delta)\tilde{v}=|\tilde v|^2v+e\]for some function $e$. Let $\epsilon_0$ be as in the previous lemma. Assume that \begin{align}||\tilde{v}||_{L_t^\infty H_x^1(I\times \R^2)}&\le M \label{SL1}\\ ||\tilde{v}||_{L_t^4H_x^{1,4}}\le L,\label{SL2}\end{align} for some positive constant $M$ and $L$. Let $t_0\in I$ and let $v(t_0)$ be close to $\tilde{v}(t_0)$ in the sense that
    \begin{align}||v(t_0)-\tilde{v}(t_0)||_{H_x^1}\le \ep,\label{SL3}\end{align}
    for some $\epsilon_0>\epsilon>0$. Assume also the smallness condition \begin{align}||e||_{N(I\times \R^2)}\le \epsilon \label{SL5},
    \end{align}for some $0<\epsilon\le \epsilon_1$ where $\epsilon_1=\epsilon_0(M)>0$ is a small constant. Then, there exists a solution $v$ to $\eqref{mcnls}$ on $I\times \R^2$ with initial data $v(t_0)$ at time $t=t_0$ satisfiying \begin{align}
        ||v-\tilde{v}||_{S(I\times \R^2)}&\le C(M,L)\epsilon. \label{SL7}\end{align}
%\begin{proof}
%Subdivide $I$ into $N\sim (1+\frac{L}{\epsilon_0})^{4}$ subintervals $I_j=[t_j,t_{j+1}]$ such that \[||\tilde{v}||_{L_t^4H_x^{1,4}(I_j)}<\epsilon\le  \epsilon_0\] where $\epsilon_0=\epsilon_0(M,2M')$ is as in Lemma $\ref{ST}$. Choosing $\epsilon_1$ sufficiently small depending on $N$ and $M$, we can apply the previous lemma to obtain for each $j$ and all $0<\epsilon<\epsilon_1$ a solution $v$ to ($\ref{mcnls}$) with \begin{align*}
%||v-\tilde{v}||_{S(I_j\times \R^2)}&\lesssim C(j)\epsilon \end{align*} provided we can prove \[||e^{i(t-t_1)}(v(t_1)-\tilde{v}(t_1))||_{L_t^4H_x^{1,4}}\le \epsilon\]and that ($\ref{SL3}$) holds with $t_0$ replaced by $t_j$. In fact, by Strichartz, it suffices to only prove the latter. We proceed by induction. By Strichartz, ($\ref{SL3}$ ), and ($\ref{SL5}$), and the inductive hypothesis \begin{align*}||v(t_j)-\tilde{v}(t_j)||_{H_x^1}&\lesssim
%||v(t_0)-\tilde{v}(t_0)||_{H_x^1}+||(i\partial_t+\Delta)(v-\tilde{v})+e||_{N([t_0,t_j]\times \R^2}+||e||_{N([t_0,t_j]\times \R^2)}\\ &\lesssim \epsilon+\sum_{k=0}^{j}C(k)\epsilon+\epsilon
%\end{align*}Here, $C(k)$ depends on $k$, $M$, and $\epsilon_0$. Choosing $\epsilon_1$ sufficiently small depending on $N$ and $M$ we can continue the inductive argument.
%\end{proof}

\end{proposition}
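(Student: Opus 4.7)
The plan is to reduce Proposition \ref{SL} to the short-time statement Lemma \ref{ST} by subdividing $I$ into finitely many subintervals on each of which $\tilde v$ has small $L_t^4 H_x^{1,4}$ norm, and then iterating, tracking how the perturbation and the error accumulate from one subinterval to the next.

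\textbf{Step 1 (Partition of $I$).} Let $\epsilon_0 = \epsilon_0(M, M')$ be the threshold from Lemma \ref{ST}; since we will feed the induction with a difference of size comparable to $\epsilon_0$, we fix $M' = \epsilon_0$ and treat $\epsilon_0 = \epsilon_0(M)$. Using \eqref{SL2} together with the absolute continuity of the $L_t^4 H_x^{1,4}$ norm, partition $I = \bigcup_{j=1}^{J} I_j$, $I_j = [t_{j-1}, t_j]$, where the breakpoints are chosen consecutively from $t_0 = \inf I$ so that
\begin{equation*}
\|\tilde v\|_{L_t^4 H_x^{1,4}(I_j \times \R^2)} \le \tfrac{1}{2}\epsilon_0, \qquad j = 1,\dots,J,
\end{equation*}
with $J = J(M, L)$ depending only on $M$ and $L$.

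\textbf{Step 2 (Inductive application of Lemma \ref{ST}).} I will show by induction on $j$ that there is a solution $v$ of \eqref{mcnls} on $[t_0, t_j] \times \R^2$ and a constant $C_0 = C_0(M) \ge 1$ such that
\begin{align*}
\|v - \tilde v\|_{S([t_0, t_j] \times \R^2)} &\le (2C_0)^{j}\,\epsilon, \\
\|v(t_j) - \tilde v(t_j)\|_{H_x^1} &\le (2C_0)^{j}\,\epsilon.
\end{align*}
At step $j+1$, I verify the hypotheses of Lemma \ref{ST} on $I_{j+1}$. Conditions \eqref{ST1}, \eqref{ST3}, \eqref{ST5} follow from \eqref{SL1}, the partition, and \eqref{SL5} (since $\|e\|_{N(I_{j+1})} \le \|e\|_{N(I)} \le \epsilon$). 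Condition \eqref{ST2} with parameter $M_j' := (2C_0)^j \epsilon$ is the inductive hypothesis. For \eqref{ST4} we apply the Strichartz estimate to the free evolution of $v(t_j) - \tilde v(t_j)$:
\begin{equation*}
\|e^{i(t-t_j)\Delta}(v(t_j) - \tilde v(t_j))\|_{L_t^4 H_x^{1,4}(I_{j+1})} \lesssim \|v(t_j) - \tilde v(t_j)\|_{H_x^1} \le (2C_0)^j \epsilon.
\end{equation*}
Provided we arrange $(2C_0)^J \epsilon \le \epsilon_0$, Lemma \ref{ST} delivers a solution on $I_{j+1}$ with
\begin{equation*}
\|v - \tilde v\|_{S(I_{j+1})} \le C_0 \bigl((2C_0)^j \epsilon\bigr), \qquad \|v - \tilde v\|_{L_t^4 H_x^{1,4}(I_{j+1})} \lesssim (2C_0)^j \epsilon,
\end{equation*}
which in particular controls $\|v(t_{j+1}) - \tilde v(t_{j+1})\|_{H_x^1} \le (2C_0)^{j+1}\epsilon$ (the $S$ norm dominates the $L_t^\infty H_x^1$ norm), closing the induction.

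\textbf{Step 3 (Choice of $\epsilon_1$ and conclusion).} Setting $\epsilon_1 = \epsilon_1(M,L) := \epsilon_0 (2C_0)^{-J}$, the smallness constraint required at every step is satisfied whenever $\epsilon \le \epsilon_1$. Summing over the $J = J(M,L)$ subintervals gives
\begin{equation*}
\|v - \tilde v\|_{S(I \times \R^2)} \le \sum_{j=1}^{J} \|v - \tilde v\|_{S(I_j)} \le \sum_{j=1}^{J} (2C_0)^j \epsilon \le C(M,L)\,\epsilon,
\end{equation*}
which is the desired bound \eqref{SL7}.

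\textbf{Main obstacle.} The delicate point is the geometric accumulation of errors: each application of Lemma \ref{ST} multiplies the size of the $H_x^1$ difference by a factor $\sim C_0(M)$, while the admissible smallness at each step is a fixed constant $\epsilon_0(M)$. One must therefore shrink $\epsilon$ by a factor $C_0(M)^{J(M,L)}$ in order to survive all $J$ iterations — this is the source of the dependence of $\epsilon_1$ on $L$ (via $J$), and is why Lemma \ref{ST} is only a short-time result. No new analytic input beyond Strichartz, the cubic pointwise bounds \eqref{cubicdiff}--\eqref{cubicderivdiff}, and the control already provided by Lemma \ref{ST} is needed.
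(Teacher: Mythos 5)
Your proposal is correct and is precisely the ``usual combinatorial argument'' that the paper itself invokes without writing out: partition $I$ into $J=J(M,L)$ subintervals on which $\|\tilde v\|_{L_t^4H_x^{1,4}}\le \tfrac12\epsilon_0$, iterate Lemma \ref{ST} while the $H_x^1$ difference grows by at most a factor $2C_0(M)$ per step, and absorb this geometric growth by shrinking the admissible $\epsilon$ to $\epsilon_0(2C_0)^{-J}$. Two harmless remarks: your choice $M'=\epsilon_0$ is formally circular (take instead a fixed $M'=1$, so $\epsilon_0=\epsilon_0(M)$ is genuinely uniform over the iteration), and your threshold $\epsilon_1$ inevitably depends on $L$ as well as $M$ through $J$ — this is the standard formulation, and the paper's ``$\epsilon_1=\epsilon_0(M)$'' should be read with that dependence understood.
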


\subsection{Proof of Theorem $\ref{perturbtheorem}$}
In this section, we upgrade the {\em a priori} $L_t^4L_x^8$ bound granted by Morawetz to derive global spacetime bounds that imply scattering for $\eqref{PDE}$ in the energy-subcritical case. As in \cite{MR2354495}, our approach is perturbative. We split the nonlinear term in $\eqref{PDE}$   \begin{align*}&\left\{\begin{array}{ll}iu_t+\Delta u=4\pi|u|^2u+F_2(u)\\ u(0,x)=u_0\in
        H_x^1(\R^2)\end{array}\right.\end{align*}and view $\eqref{PDE}$ as a perturbation to the mass-critical NLS with $F_2$ as an error term. The dual Strichartz estimates on $F_2$ derived from Corollary \ref{subcritmora} will grant us sufficiently good bounds to use the stability theory.
\begin{proof}[Proof of Theorem \ref{perturbtheorem}]Let $u$ be the global solution to $\eqref{PDE}$ given by Theorem $\ref{GWP}$.  By Lemma $\ref{morawetz}$ and the conservation of energy and mass, we see that \[||u||_{L_t^4L_x^8}\lesssim ||u||_{L_t^\infty H_x^1(\R\times \R^2)}\le C(H,M).\] Let $\epsilon$ be a small constant to be chosen later. Split $\R$ into $J=J(H,M,\epsilon)$ subintervals $I_j$, $0\le j\le J-1$ so that \[||u||_{L_t^4L_x^8(I_j)}\sim \epsilon.\] We'll show that $u$ obeys good Strichartz bounds on each slab $I_j\times \R^2$. It follows from Corollary \ref{subcritmora} that \begin{align}\label{PF1}||e||_{{N}(I_j\times \R^2)}\le C(H,M)\epsilon^{4\delta}||u||_{L_t^4H_x^{1,4}(I_j)}^{2},\end{align}for all $0\le j\le J-1$. In what follows, we fix an interval $I_{j_0}=[a,b]$ and prove that $u$ obeys good Strichartz estimates on the slab $I_{j_0}\times \R^2$. In order to do so, we view the solution $u$ as a perturbation to a solution to $\eqref{mcnls}$ \[\left\{\begin{array}{ll}(i\partial_t+\Delta)v=4\pi|v|^2v\\ v(a)=u(a).\end{array}\right..\] This initial value problem is globally well-posed in $H_x^1$ by Theorem \ref{dodson}, and the unique solution enjoys the global spacetime bound \[||v||_{S}\le C(H,M).\] Thus, we can subdivide $\R$ into $K=K(M,H,\eta)$ subintervals $J_k$ so that \begin{align}\label{PF2}||v||_{L_t^4H_x^{1,4}(J_k)}\sim \eta,\end{align} on each $J_k$, where $\eta>0$ is a small constant to be chosen later. Without loss of generality, we assume that \[ [a,b]=\cup_{k=0}^{K'-1}J_{k}, t_0=a, t_{K'}=b\] The nonlinear evolution $v$ being small on $J_k\times \R^2$ implies that the linear evolution $e^{i(t-t_k)\Delta}v(t_k)$ is small as well. By Strichartz, we have that \begin{align*}||e^{i(t-t_k)\Delta}v(t_k)||_{L_t^4H_x^{1,4}(J_k)}&\le
        ||v||_{L_t^4H_x^{1,4}}+C_{ST}|||v|^{2}v||_{ L_t^\frac{4}{3}L_x^\frac{4}{3}}+C_{ST}||\nabla|v|^{2}v||_{ L_t^\frac{4}{3}L_x^\frac{4}{3}}\\ &\le   ||v||_{L_t^4H_x^{1,4}}+C_{ST}||v||_{L_t^4L_x^4}^2||v||_{L_t^4L_x^4}+C_{ST}||v||_{L_t^4L_x^4}^3\\ &\le \eta+2C_{ST}\eta^{3},\end{align*} where $C_{ST}$ is the constant appearing in the Strichartz inequality. If $\eta<(4C_{ST})^{-1/2},$ then \begin{align}\label{PF3}||e^{i(t-t_k)\Delta}v(t_k)||_{L_t^4 H_x^{1,4}(J_k)}\le 2\eta.\end{align} 
        
        Our goal is to apply the our mass-critical stability theory to compare $u$ and $v$ on the slab $[t_0,t_1]\times \R^2$. To do so, we must verify that $u$ and $v$ satisfy the hypotheses of Proposition \ref{ST}. Once we do, this will guarantee that $u$ and $v$ remain close at time $t_1$. We will use this closeness to verify the hypotheses of Proposition \ref{ST} for $u$ and $v$ on the slab $[t_1,t_2]\times \R^2$, to show that they continue to remain close on the entirety of the slab. We continue in this fashion iteratively, recycling the closeness on one slab to verify the hypotheses of Proposition \ref{ST} for $u$ and $v$ on the next, to show that $u$ and $v$ remain close on all of $I_{j_0}\times \R^2$. 
        
To establish the base case, recall that $u(t_0)=v(t_0)$. So, by Strichartz, $\eqref{PF3}$, and $\eqref{PF1}$ we know that \begin{align*}
        ||u||_{L_t^4H_x^{1,4}([t_0,t_1])}&\le ||e^{i(t-t_0)\Delta}u(t_0)||_{L_t^4H_x^{1,4}([t_0,t_1])}+C_{ST}|||u|^2u||_{N([t_0,t_1])}+C_{ST}||F_2(u)||_{N([t_0,t_1])}\\&\le 2\eta+C_{ST}||u||_{L_t^4H_x^{1,4}([t_0,t_1])}^3+C(H,M)\epsilon^{4\delta}||u||_{L_t^4H_x^{1,4}([t_0,t_1])}^{2}\end{align*} which, by a continuity argument, yields that \begin{align}\label{PF4}||u||_{L_t^4 H_x^{1,4}}\le 4\eta\end{align}provided that $\eta$ and $\ep=\ep(H,M)$ are sufficiently small. To apply our stability lemma, we need to check that the error term $F_2(u)$ is small
    in $N(J_0\times \R^2)$. But this is straightforward since, by Corollary \ref{subcritmora},  \begin{align}\label{PF5}||F_2(u)||_{{N([t_0,t_1])}}\le C(H,M)\epsilon^{4\delta}||u||_{L_t^4H_x^{1,4}(J_0\times \R^2)}^2\le  C(H,M)\eta^{2} \epsilon^{4\delta}.\end{align} Choosing $\ep$ sufficiently small depending only on $H$ and $M$, we have that \[||u-v||_{{S}(J_0\times \R^2)}\le \epsilon^{2\delta}.\] By Strichartz, this implies that 
    \begin{align}            \label{PF6}||u(t_1)-v(t_1)||_{H_x^1}\le \ep^{2\delta},\\
   \label{PF7} ||e^{i(t-t_1)\Delta}(u(t_1)-v(t_1))||_{L_t^4H_{x}^{1,4}}\lesssim \ep^{2\delta}
    \end{align}
    Now we'll use ($\ref{PF6}$) and ($\ref{PF7}$) to estimate $u$ on the slab
    $J_1\times \R^2$. Splitting the linear evolution, we see by Strichartz, ($\ref{PF1}$),
    ($\ref{PF3}$), and ($\ref{PF7}$) that
        \begin{align*}
            ||u||_{L_t^4H_x^{1,4}(J_1)}&\le ||e^{i(t-t_k)\Delta}v(t_k)||_{L_t^4H_x^{1,4}(J_1)}+
            ||e^{i(t-t_k)\Delta}(u(t_k)-v(t_k))||_{L_t^4H_x^{1,4}(J_1)}\\
            &\qquad +C_{ST}||u||_{L_t^4H_x^{1,4}(J_1)}^3+C(H,M)\epsilon^{4\delta}||u||_{L_t^4H_x^{1,4}(J_1)}^2\\
            &\le 2\eta+C_{ST}\epsilon^{2\delta}+C_{ST}||u||_{L_t^4H_x^{1,4}(J_1)}^{3}
            +C(H,M)\epsilon^{4\delta}||u||_{L_t^4H_x^{1,4}(J_1)}^2.
        \end{align*}

 A standard continuity argument then yields that $||u||_{L_t^4H_x^{1,4}(J_1)}\le 4\eta$ provided $\eta$ and $\epsilon=\epsilon(H,M)$ are chosen sufficiently small.
 
 This implies that \[||F_2(u)||_{N(J_1\times \R^2)}\le C(H,M)\eta^2 \ep^{4\delta}.\]Choosing $\epsilon$ sufficiently small depending only on $H$ and $M$, we can apply the stability lemma to derive that \[||u-v||_{S(J_1\times \R^2)}\le \ep^{\delta}.\]  By induction, if we take $\ep=\ep(H,M)$ smaller at each step, we obtain that \begin{align}\label{induct}
        ||u-v||_{S(J_k\times \R^2)}\le \ep^{\delta/2^{k-1}}.
        \end{align} Adding these bounds over all intervals $J_k$ which meet $I_{j_0}$ we deduce that \begin{align}\label{derp}\sum_{j=0}^{K'-1}||u||_{S([t_j,t_{j+1}]\times \R^n)}&\lesssim ||v||_{S(I_{j_0}\times \R^2)}+\sum_{k=0}^{K'-1}||u-v||_{S(J_k\times \R^n)}\le C(H,M).\end{align}Thus, we find that \eqref{derp} holds with $K'$ in place of $k$. Since $I_{j_0}$ was arbitrarily chosen, we may some over the finitely many $J$ intervals to see that $||u||_{S(\R\times \R^2)}\le JC(H,M)<\infty$, which completes the proof. \end{proof}
 \subsection{Finite global Strichartz norms imply scattering}
 We show that finite global Strichartz norms imply scattering. We will only present the construction of the scattering states, and demonstrate that their linear flow asymptotically approximates our solution. In fact, we will only construct scattering states in the positive time direction, since identical arguments can be used in the negative time direction. Standard techniques can be used to construct wave operators, see \cite{MR2002047}. 
 
Suppose $u_0\in H_x^1$ with $H(u_0)=H<1$ and that $u(t)$ is a global solution to \eqref{PDE}. Then, by Theorem \ref{perturbtheorem}, we know that \[||u||_{L_t^4H_{x}^{1,4}}\le C(H,M).\] For $0<t<\infty$ define \[u_{+}(t)=u_0-i\int_{0}^{t}e^{-is\Delta}F_1(u(s))\ ds.\] By the Strichartz inequality, we see that \[||u_{+}||_{L_t^\infty H_x^1}\lesssim ||u_0||_{H_x^1}+||F_1(u)||_{N(\R)}\lesssim ||u_0||_{H_x^1}+||4\pi |u|^2u||_{L_t^\frac{4}{3}H_{x}^{1,\frac{4}{3}}}+||F_2(u)||_{L_t^\frac{4}{3}H_{x}^{1,\frac{4}{3}}}.\] By Corollary \ref{subcritmora}, there exists $0<\delta=\delta(H)<\frac{1}{3}$ so that\begin{align*}
||4\pi |u|^2u||_{L_t^\frac{4}{3}H_{x}^{1,\frac{4}{3}}}\lesssim ||u||_{L_t^4H_{x}^{1,4}}^3<\infty\qtq{and}||F_2(u)||_{N}\lesssim ||u||_{L_t^4L_x^8}^{4\delta}||u||_{L_t^4H_{x}^{1,4}}^{2}<\infty.
\end{align*} Thus $u_+(t)\in H_x^1$ for all $t\in \R$. A similar argument shows that $u_{+}(t)$ converges in $H_x^1$ as $t\to \infty$. Define \[u_{+}=u_0-i\int_{0}^{\infty}e^{-is\Delta}F_1(u(s))\ ds.\]
With $u_{+}$ so defined, we see that \begin{align*}
||e^{it\Delta}u_{+}-u(t)||_{H_x^1}&\lesssim ||\int_{t}^{\infty}e^{-is\Delta}F_1(u(s))\ ds||_{H_x^1}\\&\lesssim ||u||_{L_t^4H_{x}^{1,4}([t, \infty]\times \R^2)}^2+||u||_{L_t^4L_x^8([t, \infty]\times \R^2)}^{4\delta}||u||_{L_t^4H_{x}^{1,4}([t, \infty]\times \R^2)}^{2}
\end{align*}which tends to $0$ as $t\to \infty$.

\bibliography{NLScattering}
\end{document}